\newcommand{\bigcupdot}{\ensuremath{\mathop{\makebox[-2pt]{\hspace{11pt}{\(\cdot\)}}\bigcup}}}
\providecommand{\keywordss}[1]
{
  \small	
  \textbf{\textit{Keywords---}} #1
}
\newtheorem{thmmain}{Theorem}
\newtheorem{thm}{Theorem}[section]
\newtheorem{lem}[thm]{Lemma}
\newtheorem{rmk}{Remark}
\newtheorem*{rmk*}{Remark}
\newtheorem{prop}[thm]{Proposition}
\newtheorem{clm}[thm]{Claim}
\newtheorem{obs}[thm]{Observation}
\newtheorem*{obs*}{Observation}
\newtheorem{cor}[thm]{Corollary}
\newtheorem*{cor*}{Corollary}
\newtheorem{problem}{Problem}
\newcommand{\vast}{\bBigg@{4}}
\newcommand{\Vast}{\bBigg@{5}}
\newcommand{\anc}{A}
\newcommand{\FF}{{\mathcal F}}
\newcommand{\TT}{{\mathcal T}}
\newcommand{\II}{{\mathcal I}}
\newcommand{\RR}{\mathcal R}
\newcommand{\R}{\mathbb R}
\newcommand{\N}{\mathbb N}
\newcommand{\Z}{\mathbb Z}
\newcommand{\p}{\varphi}
\newcommand{\ct}{\tau}
\renewcommand{\H}{{H}}
\newcommand{\h}{{h}}
\newcommand{\sca}{0.6}
\newcommand{\smallsca}{0.2}
\newcommand{\xmargin}{0.2}
\newcommand{\ZeiSign}{\scaleobj{0.7}{\uparrow\mkern-6mu\nearrow}}
\newcommand{\ZhpSign}{\scaleobj{0.7}{\nwarrow\mkern-6mu\uparrow\mkern-6mu\nearrow}}
\newcommand{\ZZ}{{\Z \boxtimes \Z}}
\newcommand{\Zhp}{\Z^2_{\ZhpSign}}
\newcommand{\Zei}{\Z^2_{\ZeiSign}}
\newcommand{\Ztr}{\Z^2_{\triangle}}
\newcommand{\tms}{\N}
\newcommand{\origin}{\beta}
\newcommand{\rng}{r}
\newcommand{\spr}{\chi}
\DeclareMathOperator{\dis}{\Omega}
\newcommand{\tempd}{\tilde{d}}
\newcommand{\ppo}{\Phi}
\newcommand{\lowphiseg}{\Bar{\seg}}
\newcommand{\GG}{\mathcal{G}}
\newcommand{\area}{\mathcal{A}}
\newcommand{\trap}{\TT}
\newcommand{\str}{q}
\newcommand{\cint}{2} 
\newcommand{\cR}{3}
\newcommand{\dispath}[2]{b_{#1}^{#2}}
\newcommand{\rad}{\rho}
\newcommand{\gray}[1]{\textcolor{gray}{#1}}
\newcommand{\seg}{{\mathcal S}}
\DeclarePairedDelimiter\floor{\lfloor}{\rfloor}
\newcommand{\ind}{\mathbbm 1}
\newcommand{\f}{{\tilde{f}}}
\newcommand{\horint}{\mathcal{L}}
\DeclareRobustCommand{\cev}[1]{%
  \mathpalette\do@cev{#1}%
}
\newcommand{\do@cev}[2]{%
  \fix@cev{#1}{+}%
  \reflectbox{$\m@th#1\vec{\reflectbox{$\fix@cev{#1}{-}\m@th#1#2\fix@cev{#1}{+}$}}$}%
  \fix@cev{#1}{-}%
}
\newcommand{\fix@cev}[2]{%
  \ifx#1\displaystyle
    \mkern#23mu
  \else
    \ifx#1\textstyle
      \mkern#23mu
    \else
      \ifx#1\scriptstyle
        \mkern#22mu
      \else
        \mkern#22mu
      \fi
    \fi
  \fi
}
\newcommand{\subjclass}[2][1991]{%
  \let\@oldtitle\@title%
  \gdef\@title{\@oldtitle\footnotetext{#1 \emph{Mathematics subject classification.} #2}}%
}
\newcommand{\keywords}[1]{%
  \let\@@oldtitle\@title%
  \gdef\@title{\@@oldtitle\footnotetext{\emph{Key words and phrases.} #1.}}%
}
\tikzset{
        hatch distance/.store in=\hatchdistance,
        hatch distance=10pt,
        hatch thickness/.store in=\hatchthickness,
        hatch thickness=1pt
}
\tikzset{plane/.style n args={3}{insert path={#1 -- ++ #2 -- ++ #3 -- ++ ($-1*#2$) -- cycle}},
unit xy plane/.style={plane={#1}{(\scaling,0,0)}{(0,\scaling,0)}},
unit xz plane/.style={plane={#1}{(\scaling,0,0)}{(0,0,\scaling)}},
unit yz plane/.style={plane={#1}{(0,\scaling,0)}{(0,0,\scaling)}},
get projections/.style={insert path={let \p1=(\scaling,0,0),\p2=(0,\scaling,0)  in 
[/utils/exec={\pgfmathtruncatemacro{\xproj}{sign(\x1)}\xdef\xproj{\xproj}
\pgfmathtruncatemacro{\yproj}{sign(\x2)}\xdef\yproj{\yproj}
\pgfmathtruncatemacro{\zproj}{sign(cos(\tdplotmaintheta))}\xdef\zproj{\zproj}}]}},
pics/unit cube/.style={code={
\path[get projections];
\draw (0,0,0) -- (\scaling,\scaling,\scaling);
\ifnum\zproj=-1
 \path[3d cube/every face,3d cube/xy face,unit xy plane={(0,0,0)}]; 
\fi
\ifnum\yproj=1
 \path[3d cube/every face,3d cube/yz face,unit yz plane={(\scaling,0,0)}]; 
\else
 \path[3d cube/every face,3d cube/yz face,unit yz plane={(0,0,0)}]; 
\fi
\ifnum\xproj=1
 \path[3d cube/every face,3d cube/xz face,unit xz plane={(0,0,0)}]; 
\else
 \path[3d cube/every face,3d cube/xz face,unit xz plane={(0,\scaling,0)}]; 
\fi
\ifnum\zproj>-1
 \path[3d cube/every face,3d cube/xy face,unit xy plane={(0,0,\scaling)}]; 
\fi
}},
3d cube/.cd,
xy face/.style={fill=lightgray},
xz face/.style={fill=darkgray},
yz face/.style={fill=gray},
every face/.style={draw,very thick}
}
\colorlet{lightgrayX}{black!17!white}
\colorlet{grayX}{black!26!white}
\colorlet{darkgrayX}{black!47!white}
\colorlet{lightblackX}{black!90!white}
\colorlet{verylightgrayX}{black!2!white}
\title{The Containment Game: between the Firefighter Problem and Conway's Angel Problem}
\author{Ohad Noy Feldheim\thanks{Hebrew University of Jerusalem Israel, \texttt{ohad.feldheim@mail.huji.ac.il}.}
\and Itamar Israeli\thanks{Hebrew University of Jerusalem Israel, \texttt{itamar.israeli@mail.huji.ac.il}.}}
\newcommand{\NE}{{\text{\resizebox{7pt}{!}{$\uparrow\kern-0.52em{\nearrow}$}}}}
\newcommand{\WNE}{{\text{\resizebox{9pt}{!}{$\nwarrow\kern-0.52em{\uparrow}\kern-0.32em{\nearrow}$}}}}
\begin{document}
\maketitle
\begin{abstract}
The Containment Game is a two-player perfect-information game, initialised with a finite set of occupied vertices in an infinite connected graph $G$. On the $t$-th turn, the first player, called \emph{Spreader}, replaces the occupied set with a collection of $g(t)$ vertices adjacent to it; the second player, called \emph{Container}, then removes $q$ unoccupied vertices from the graph. If the spreading process continues indefinitely, Spreader wins; otherwise, Container wins. 
For $g\equiv 1$ it is equivalent to the $k$-king variant of Conway's angel problem, where the angel movements must trail along unblocked paths, while for $g=\infty$ this game reduces to a solitaire game for Container, known as the \emph{Firefighter Problem}. 

We introduce the game and study it on the strongly connected two dimensional integer lattice $\Z\boxtimes\Z$. Writing $q(G,g)$ for the set of $q$ values for which Container wins against a given $g(t)$, we study the minimal asymptotics of $g(t)$ for which $q(G,g)=q(G,\infty)$, that is, for which defeating Spreader is as hard as winning the Firefighter Problem solitaire.
We show, by providing explicit winning strategies for Container, a sub-linear upper bound $g(t)=O(t^{6/7})$ and a lower bound of $g(t)=\Omega(t^{1/2})$.

\end{abstract}

\keywordss{Conway's angel problem, Firefighter problem, combinatorial games, pursuit-evasion games, k-King problem}

\section{Introduction}\label{section: introduction}

\emph{Conway's Angel Problem} is a classical two player game played between an \emph{Angel} of speed $k$ and a \emph{Devil} of strength $q$ (first appearing in~\cite{berlekampwinningways}*{Section 19}). In this game, an angel is located at the origin of the graph $G=\Z\boxtimes\Z$. In every turn the Angel player may reposition the angel to any unblocked vertex at distance $k$ or less from its current location. The Devil player then deletes from the graph $q$ vertices, unoccupied by the angel. The Devil player wins if the angel is contained in a finite set; while the Angel player wins if the angel may continue to occupy new vertices indefinitely.  If the graph distance is taken with respect to the remainder graph, the problem is known as the $k$-King problem \cite{kutz2004angel}. In both cases, a Devil of strength $q=1$ is able to win against an Angel of speed $1$ (see e.g. Kutz \cite{kutz2004angel}) while losing to an Angel of speed $2$ (see Kloster \cite{kloster2007solution}).

The $(G,\str)$-\emph{Firefighter Problem}, introduced by Hartnell~\cite{hartnell1995firefighter}, is a single real parameter solitaire combinatorial game played on a graph $G$, obtained by considering the $k$-king problem played by the Devil against a non-deterministic angel. Namely, the Devil is tasked with deleting a deterministic sequence of vertices, in order to win against all possible angel strategies at once. 
Formally, the game could be described as follows. Given a finite starting set of \emph{burning}  vertices $B_0\subset V$ (representing initial positions of the Angel), at every turn $t\in\{1,2,\dots\}$, the so called \emph{Firefighter} player chooses an arbitrary collection of at most $\floor{t\str} - \floor{(t-1)\str}$ non-burning vertices and deletes them from the graph. Then, neighbours of burning vertices in the remainder graph become burning. If no new burning vertices are generated, we say that the \emph{fire} is \emph{contained}.
The Firefighter player \emph{wins} $(G,\str)$ if  able to contain any finite initial burning set.
We denote 
\[\str_G:=\{\str\ :\ \text{$(G,\str)$ is a Firefighter win for every finite $B_0$}\}.\] 
Since winning is monotone in $\str$, this set always forms an infinite ray. The problem has been extensively studied on the integer lattice with nearest neighbour adjacency, either with respect to $L^\infty$ or to $L^1$, graphs which are denoted by $\Z\boxtimes \Z$ and $\Z\,\square\,\Z$, respectively. Following the work of Fogarty \cite{fogarty2003catching}, Hod and the first author \cite{feldheim20133} showed that $\str_{\Z\boxtimes \Z}=(3,\infty)$ and $\str_{\Z\square \Z}=(3/2,\infty)$. 

Naturally, the Firefighter player needs greater strength to contain the fire than the Devil player requires for winning against an angel of speed $1$. This gives rise to the following informal question, which motivates our study:

\begin{displayquote}
``How much non-determinism is required for the angel to be as hard to contain as the fire?''
\end{displayquote}

To pose this formally, we introduce the following generalisation of both games, which we call the \emph{Containment Game}, played between two players: \emph{Spreader} and \emph{Container}.
This is also a generalisation of the $k$-firefighter game suggested by Devlin and Hartke~\cite{develin2007fire} and studied on finite graphs by Bonato, Messinger and Pra\l{}at~\cite{bonato2012fighting}.

Given a \emph{spreading} function $g:\N \to \N$
we define the $(G=(V,E),\str,g)$ \emph{Containment Game} as the following two-player combinatorial game. We initialise the game with a finite initial set $B_0$ of \emph{occupied} vertices, and an empty set $\FF_0=\emptyset$ of deleted vertices.
At every time-step $t \ge 1$, Container deletes a collection of at most $\floor{t\str} - \floor{(t-1)\str}$ non-occupied vertices from the graph, adding them to $\FF_{t-1}$ to form $\FF_t$. Then, Spreader selects $g(t)$ vertices at distance at most one from $B_{t-1}$ in the remainder graph $G_t:=V\setminus \FF_t$, which form $B_t$.
If eventually no new vertices are occupied -- Container wins; otherwise -- Spreader wins. We call $(G,q,g)$ a \emph{Container win} if Container can win the game against any Spreader strategy, and any finite initial set of occupied vertices $B_0$.

Given $g$ we define 
\[q(G,g):=\{q\ :\ \text{$(G,q,g)$ is a Container win}\},\]
observing that this is an infinite (open or closed) ray, and that $q(G,g')\subset q(G,g)$ for all $g'>g$. We write $(G,q,\infty)$ for the Containment Game without any restriction on the number of vertices selected by Spreader each turn.
It is not difficult to observe that in this case the optimal Spreader strategy is to always extend $B_t$ to all of its available neighbours, and the game played by Container is non other than the Firefighter Problem solitaire. 

Our question of interest could be now phrased as:
\begin{displayquote}
``Which is the minimal $g(t)$ asymptotics for which $q(G,g)=q(G,\infty)$?''
\end{displayquote}
Observe that this is always satisfied for $g=4t+C$ for sufficiently large $C=C(B_0)$, as full spreading never requires more than this number of occupied vertices. The fundamental question, therefore, is whether one can achieve this for $g=ct+C$ for $c<4$ or even for sub-linear $g=o(t)$.

For brevity and conciseness we show our results only for $G=\ZZ$, although analogues for $G=\Z\,\square\,\Z$ seem to follow from the same arguments. 
We also study the game on the following simpler graph $G=\Zei$, which is the sub-graph of $\ZZ$ restricted to $\{(x,y)\ :\  0\le x\le y\}$ equipped with the edge set $\left\{\big((x,y), (x+i,y+1)\big) : 0\le x\le y,\, i\in \{0,1\}\right\}$. This graph captures the essence of our methods and serves as a stepping stone for the study of $\ZZ$.

Our main result is that a sub-linear $g$ can achieve $q(\ZZ,g)=q(\ZZ,\infty)$.

\begin{thmmain}\label{thm: sub-linear win}
Let $G\in\{\Zei,\ZZ\}$.
For all sufficiently large $C > 0$, we have
$q(G,Ct^{6/7})=q(G,\infty)$.
\end{thmmain}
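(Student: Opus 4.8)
We prove the two inclusions separately. The inclusion $q(G,\infty)\subseteq q(G,Ct^{6/7})$ holds for every $C>0$ and is routine: fixing $q\in q(G,\infty)$, let Container use the play that contains the full (unrestricted) fire for that $q$; since a $Ct^{6/7}$‑restricted Spreader always has $\BB_t$ contained in the corresponding unrestricted burning set (induction on $t$, as $\FF_t$ is the same), it is contained too. So the whole content is the reverse inclusion, and passing to complements in the rays $q(G,\cdot)$ it suffices to show: \emph{whenever $q$ is a loss for the Firefighter Problem on $G$, Spreader wins $(G,q,Ct^{6/7})$ from a suitable finite $\BB_0$, once $C$ is large enough.} I would prove this by exhibiting explicit Spreader strategies, treating $\Zei$ first as the model case and then $\ZZ$.

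On $\Zei$ every configuration is a family of intervals, one per level; Spreader's topmost interval climbs at most one level per turn; and a deletion interior to an interval is ``healed'' one level higher, so Container can only make progress by eroding the two extreme ends of the topmost interval, at which rate the freely spreading interval shrinks by $(\text{deletion rate})-1$ per level. In the firefighter‑loss regime that rate is $\le 1$, so the free interval never shrinks. Spreader's plan is simply to reoccupy, each turn, the reachable part of the next level, using the remaining budget for small local patches. The one estimate needed is a reachability‑cone bound: an interval of width $w$ reaches, $\Delta$ levels later, only vertices in a window of length $w+\Delta$, so to seal level $\Delta$ ahead Container must place $\Omega(w+\Delta)$ deletions within the $\Delta$ turns before Spreader's interval arrives — impossible at deletion rate $\le 1$ for suitable $w,\Delta$. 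Keeping $w=O(t^{6/7})$ (indeed $O(1)$) already defeats every such attempt; running this reactively for all time settles $\Zei$, with $t^{6/7}$ far from optimal there, but the scheme is what we imitate on $\ZZ$.

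For $\ZZ$ the frontier is a curve bounding a two‑dimensional region and ``Container only attacks the ends'' fails, so Spreader instead grows $\BB_t$ deliberately far from round: a thick, repeatedly branching, meandering structure whose frontier has length comparable to the total number $\sum_{s\le t}g(s)\asymp t^{13/7}$ of vertices ever occupied, rather than to its square root $t^{13/14}$. The strategy runs in epochs: Spreader enters epoch $k$ holding a configuration satisfying a fixed invariant — a lower bound on the length and thickness profile of its frontier plus a regeneration/connectivity property — and spends the $\tau_k$ turns advancing and re‑branching so as to re‑establish the invariant at the new scale. Each epoch one verifies (i) \emph{local survival}: the $\le q\tau_k$ deletions of Container cannot break the invariant, because each tendril is thick enough to absorb a concentrated $q$‑attack for the duration of any enclosure attempt and the per‑turn budget $Ct^{6/7}$ suffices to re‑grow the attacked portion and keep spawning new tendrils; and (ii) \emph{no enclosure}: Container's cumulative $\le qt$ deletions can never form a curve separating $\BB_t$ from infinity, since such a curve is at least as long as the portion of the frontier it crosses, which is $\omega(t)$. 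The exponent $6/7$ is exactly what balances (i) against (ii): with $g=Ct^{\alpha}$ the occupied area is $\asymp t^{\alpha+1}$ and the frontier can be made that long — beating Container's budget $\asymp t$ for every $\alpha>0$ — but the thickness each tendril must carry (forced from below by how much Container can erode during the time an enclosure would take) and the number of tendrils (forced from above by the per‑turn budget) are compatible only for $\alpha\ge 6/7$, with $C$ absorbing the implicit constants; hence ``sufficiently large $C$''.

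I expect step (ii), and its interaction with (i), to be the crux: one must find a single invariant on $\BB_t$ that is cheap enough to re‑establish each epoch with only $Ct^{6/7}$ new vertices per turn, robust to the worst‑case placement of Container's deletions, and still strong enough to preclude an enclosing curve for all future time. The last point calls for a global counting/potential argument in the spirit of the firefighter lower bounds of Fogarty and of Feldheim--Hod (``any barrier is too short''), now delicate because Spreader can push only an $O(t^{6/7})$‑sized piece of its frontier per turn and must choose it adaptively: the key is that Spreader's branching re‑seeds frontier in the currently idle regions faster than Container can convert those regions into savings on the length of a barrier. Assembling the epochs and checking the invariant persists forever then completes the argument on $\ZZ$, and with it the theorem.
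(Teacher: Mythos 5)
Your reduction (monotonicity gives $q(G,\infty)\subseteq q(G,Ct^{6/7})$, so it suffices to produce a Spreader win at the critical firefighter-loss values) matches the paper, but both strategies you then propose have genuine gaps. On $\Zei$, keeping a single contiguous interval of width $O(t^{6/7})$ (let alone $O(1)$) does \emph{not} defeat a rate-$1$ Container: Container is not forced to erode the ends of your interval. It can pre-build a wall on a distant row $[0,H]\times\{H\}$, leaving $\sim h$ regularly spaced gaps, and then plug, in the last $\sim h$ turns, only those gaps lying within distance $O(h)$ of your approaching blob. Since your recently-occupied set near the wall has horizontal extent $O(H^{6/7})=o(H)$, it threatens only $O\!\left(h\,H^{6/7}/H\right)+O(1)=o(h)$ gaps, which Container fills in time. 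This is exactly the sieve argument the paper uses for its lower bound (Theorem~\ref{thm: root-fire loses}, Section~\ref{section: upper bounds}), and it shows your ``reoccupy the reachable part of the next level over a narrow window'' plan is simply a losing strategy. The whole difficulty the paper addresses is that Spreader must keep simulating a fire of \emph{linear} horizontal extent with only $t^{6/7}$ occupied vertices: it places pivots at spacing $h_t\sim t^{1/7}$ across a linear-length front, and the entire machinery of disruptions, alert intervals, consolidation timers and debt exists to recover the Fogarty/Feldheim--Hod potential inequalities for this simulated fire when Container attacks near the front; the exponent $6/7$ comes from balancing the pivot count $t/h_t$ against the $O(h_t^6)$ spreading vertices generated near disruptions, not from a width choice.

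On $\ZZ$, the key step (ii) of your plan rests on a false claim: a barrier separating $\BB_t$ from infinity does not have to be ``at least as long as the portion of the frontier it crosses.'' The occupied set at time $t$ lies in a box of diameter $O(t)$, so a separating curve of length $O(t)$ always exists geometrically, no matter how long and meandering you make the frontier; making the frontier of length $t^{13/7}$ buys nothing by itself. The genuine obstruction to containment is dynamical, not isoperimetric: one needs a potential (burning boundary plus counted deletions) that provably increases by a fixed amount per turn under the \emph{restricted} Spreader, so that the fire reaches any prospective barrier before it is completed. Your epoch/invariant scheme names this need but supplies no such mechanism, and your derivation of the exponent $6/7$ from ``thickness vs.\ number of tendrils'' has no supporting estimate. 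The paper instead composes four rotated directed-half-plane copies of the $\Zei$ strategy, proves their play areas stay disjoint, shows the potential in each direction grows by $2$ per unit of radial advance, and concludes that at least three fronts stay occupied, mirroring the firefighter argument of Feldheim--Hod; some replacement for that quantitative core would be needed before your outline could be turned into a proof.
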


We complement this theorem with the following lower-bound.

\begin{thmmain}\label{thm: root-fire loses}
Let $G\in\{\Zei,\ZZ\}$.
For all $c < \frac{1}{6}$ we have
$q(G,ct^{1/2})\subsetneq q(G,\infty)$.
\end{thmmain}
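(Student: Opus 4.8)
The plan is to show that when Spreader is restricted to $g(t) = ct^{1/2}$ occupied vertices per turn with $c < 1/6$, there is some burn rate $\str \in q(G,\infty)$ — i.e. some $\str$ for which the Firefighter solitaire is won (so $\str > 3$ for $\ZZ$, $\str > 3/2$ for $\Zei$) — at which Container can nevertheless defeat Spreader, hence $q(G,ct^{1/2}) \supsetneq q(G,\infty)$ strictly, which is the claimed strict containment. In fact it suffices to fix $\str$ slightly above the Firefighter threshold and exhibit a Container strategy that wins against the restricted Spreader even though it would lose the unrestricted (Firefighter) game — the strategy will exploit the fact that a slowly spreading occupied set stays confined to a region whose boundary grows only like $t^{1/2}$, which Container can afford to wall off.

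The key steps, in order: (i) \textbf{Potential/geometry setup.} Track the ``frontier'' of the occupied set $\BB_t$ — on $\Zei$ this is naturally measured by how far up (in the $y$-coordinate) the occupied vertices have reached, and on $\ZZ$ by a bounding box or an $L^1$-ball radius. Since Spreader adds at most $ct^{1/2}$ vertices at turn $t$, the total number occupied by turn $T$ is at most $\sum_{t\le T} ct^{1/2} = \Theta(c T^{3/2})$; more importantly, because occupation can only move distance one per turn, after $T$ turns the occupied set lies within distance $T$ of $\BB_0$, but its \emph{size} is only $O(cT^{3/2})$, so it cannot fill a full ball of radius $T$ — there is lots of room. (ii) \textbf{Container's walling strategy.} Container builds a barrier (a closed curve on $\ZZ$, or a horizontal-ish cut on $\Zei$) enclosing $\BB_0$. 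With budget $\str$ per turn, by turn $T$ Container has removed about $\str T$ vertices. A barrier enclosing everything reachable by turn $T$ would need length $\Theta(T)$, which Container can afford since $\str$ is a constant $>1$; the subtlety is that Container must \emph{complete} a barrier before Spreader's occupied set reaches the gap. (iii) \textbf{Winning the race.} This is where $c < 1/6$ enters. Container devotes its budget to sealing off, one at a time, the directions in which the frontier is advancing. Because the frontier in any given direction advances at unit speed but the \emph{perimeter} of occupied territory grows only like $t^{1/2}$ (the occupied set is ``thin''), Container can repair/extend the barrier faster than Spreader can breach new ground: quantitatively, after the barrier has been pushed out to radius $r$, Spreader needs $\Theta(r)$ further turns to reach it again, during which Container accumulates $\Theta(\str r)$ new deletions, while the length of fresh barrier needed is controlled by the number of occupied vertices near the boundary, which is $O(c r^{1/2}) \cdot(\text{something})$ — the constant $1/6$ is tuned so this bookkeeping closes. (iv) Conclude that the occupied set is eventually fully enclosed by deleted vertices and never grows again, so Container wins; since we chose $\str$ in $q(G,\infty)$, the containment is strict.

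The main obstacle I expect is step (iii): making the ``race'' argument quantitative with the sharp constant $1/6$. One must carefully account for (a) the geometry of the barrier Container is constructing — its length as a function of how far out it sits, which on $\ZZ$ depends on whether one uses an $L^\infty$ square, an $L^1$ diamond, or an adaptively-shaped curve hugging the occupied set; (b) the worst-case Spreader response, which will concentrate all $ct^{1/2}$ new occupations in whichever direction is least defended, forcing Container to argue it can still keep up in that direction; and (c) the interaction with the discrete, integer-valued budget $\floor{t\str}-\floor{(t-1)\str}$ and the fact that Spreader moves \emph{after} Container each turn. A clean way to organise (iii) is an inductive invariant: at each time $t$ there is a barrier of deleted vertices such that every occupied vertex is at distance $\ge \phi(t)$ from any gap in it, with $\phi(t) \to \infty$; showing such an invariant is maintainable reduces the theorem to a single inequality comparing Container's per-turn budget to the per-turn growth of the required barrier length near the frontier, and that inequality is exactly where $c<1/6$ is needed. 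I would also first carry out the whole argument on the simpler graph $\Zei$, where the frontier is one-dimensional and the barrier is essentially an interval, and then transfer it to $\ZZ$ by the same reduction the paper uses elsewhere.
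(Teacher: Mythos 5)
Your plan has a genuine logical gap at its foundation. To prove the separation you must exhibit a Container win at a strength that lies \emph{outside} $q(G,\infty)$ -- i.e.\ at the exact critical values $q=1$ on $\Zei$ and $q=3$ on $\ZZ$, where the unrestricted Firefighter solitaire is \emph{lost} (note also that the $\Zei$ threshold is $1$, not $3/2$, which is the $\Z\,\square\,\Z$ value). Your plan instead fixes $\str\in q(G,\infty)$, ``slightly above the Firefighter threshold''; at such $\str$ Container already wins the unrestricted game, so a win against the restricted Spreader follows from the monotonicity $q(G,\infty)\subseteq q(G,g)$ and yields no strict separation at all (your parenthetical ``even though it would lose the unrestricted game'' contradicts the choice of $\str$ above the threshold). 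The paper proves precisely that Container of strength $1$ beats a $c\sqrt t$-Spreader on $\Zei$, and strength $3$ on $\ZZ$, which is what strictness requires.

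Once the budget is pinned to the critical value, your step (iii) -- an adaptive barrier ``race'' whose ``bookkeeping closes'' at $c<1/6$ -- is not an argument, and it is exactly where the missing idea lives. With budget exactly $1$ on $\Zei$ there is no slack to both extend and repair a barrier hugging an advancing front. The paper's mechanism is different and essentially non-adaptive: Container commits to deleting a single fixed row $[0,H]\times\{H\}$ (its position depending only on the initial occupied set), first removing the whole row \emph{except} a regular sieve of roughly $h$ holes spaced $H/h$ apart, and only in the final $\sim h$ turns plugging those holes lying within distance $\sim h$ of the occupied set. This is feasible because only $O(c\,h\sqrt H)$ vertices can have been occupied during those last turns, so (via the connectivity/``ancestor path'' counting of Lemma~\ref{lem: ub: danger zone}) the endangered portion of the row has total length $O(c\,h\sqrt H)$ and meets at most about $3c\sqrt H\,(1+\bar h^{2})/\bar h$ sieve points, where $\bar h=h/\sqrt H$; maximising $\bar h/(3(1+\bar h^{2}))$ at $\bar h=1$ is exactly where the constant $1/6$ comes from. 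Your sketch contains neither this sieve-then-plug device nor any comparable quantitative mechanism producing $1/6$, so the core of the proof is absent; the $\ZZ$ case then needs the further four-front reduction at budget $3$ (close in spirit to your bounding-box idea, but each side is sealed by the rotated budget-$1$ $\Zei$ row strategy while the remaining two deletions per turn maintain the already-built walls).
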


Note that $q(\Zei,\infty) = q_{\Zei}= (1,\infty)$ and $q(\ZZ,\infty) = q_{\ZZ}=(3,\infty)$, by \cite{fogarty2003catching} and \cite{feldheim20133}, respectively, so that  Theorems~\ref{thm: sub-linear win} and \ref{thm: root-fire loses} are equivalent to the fact that $g(t)=\Theta(t^{6/7})$ is sufficient for Spreader to win against Container of strength $q=3$ ($q=1$) on $\ZZ$ ($\Zei$), while $g(t)=O(t^{1/2})$ is insufficient.



\begin{rmk}\label{rmk: spreading instead of replacing}
    Theorem~\ref{thm: root-fire loses} is actually proven under a stronger-Spreader variation of the game, where Spreader is allowed to retain any previously occupied vertices, i.e. $B_t$ is added to $B_{t-1}$, instead of replacing it.
\end{rmk}

\begin{rmk}\label{rmk: accumulating ff}
Note that rather than asking for the number of deleted vertices up to time-step $t$ to be at most $\str t$ as per \cite{feldheim20133}, we pose the stronger turn-specific restriction that the number of vertices deleted at turn $t$ is at most $\floor{t\str} - \floor{(t-1)\str}$. In the Firefighter Problem, where the fire is deterministic, the two models are equivalent. In the Containment Game, however, a Spreader with a sub-linear spread function cannot win against an accumulating Container of any strength $\str>0$ even on $\Zei$. This can be proved similarly to our proof of Theorem~\ref{thm: root-fire loses}.
\end{rmk}



\subsection{Related work}

\textbf{Conway's Angel problem.}
The origin of Conway's Angel problem is somewhat obscure. Variants are mentioned by Martin Gardner as early as '74~\cite{gardner1974mathematical}, where credit is given to D. Silverman and R. Epstein.
In its current transformation, the problem first appeared in the classical monograph by Berlkamp, Conway and Guy~\cite{berlekampwinningways}*{Section 19}.
Conway~\cite{conway1996angel} later showed that an angel of speed $1$ loses. Several years later, Kutz~\cite{kutz2004angel} showed that an angel of speed $2-\epsilon$ loses for every $\epsilon>0$. In the meantime, the problem of showing that an angel of high enough speed wins acquired some notoriety, until finally in '07 four independent papers by Bowditch~\cites{bowditch2007angel}, Gács~\cites{gacs2007angel},
Kloster~\cites{kloster2007solution} and
Máthé~\cites{mathe2007angel} we able to show it, thus resolving the original problem. In fact, the latter two showed that speed $2$ is sufficient.

\textbf{The Firefighter Problem.} Ever since it's introduction in 1995~\cite{hartnell1995firefighter}, Hartnell's Firefighter Problem has been the subject of study in diverse contexts, as a model for expanding phenomena. Beyond $\ZZ$ and $Z\square Z$ where the problem has been resolved in \cite{feldheim20133}, the case of planar infinite $G$ considered here (where constant $\str$ makes sense) has been studied also for the triangular grid $\Ztr$, where Dean et al.~\cite{dean2021firefighting} improved upon a result of Gavenčiak, Kratochvíl, and Pra\l{}at~\cite{gavenvciak2014firefighting}, showing that a single firefighter per turn plus one additional firefighter at some turn are sufficient to contain a single fire source. It has been conjectured that this bound is tight and $q_{\Ztr}=(1,\infty)$. 
Indeed, by now the problem of finding tight bounds for winning on the triangular and hexagonal lattices gained some notoriety and remains widely open. 
In addition, the problem on a slab $G=\Z\boxtimes\Z\boxtimes[k]$ has been studied by Deutsch, Hod and the first author, who showed $q_G=(3k,\infty)$.

On $\Z^d$, where growing $\str=\str(t)$ is required to win against large starting occupied sets, rough bounds were obtained by Develin and Hartke~\cite{develin2007fire}. The problem has also been studied on Cayley graphs. Dyer, Martinez-Pedroza and Thorne~\cite{dyer2017coarse} showed that the critical growth rate of the critical $\str$ is roughly invariant under quasi-isometries. The same authors have also obtained bounds on the critical growth for the number of firefighters required to contain any finite starting fire on Cayley graphs of polynomial growth in \cite{dean2021firefighting}. This was later improved by 
Amir, Baldasso, and Kozma~\cite{AMIR2020112077}, to obtain bounds that are tight up to a constant. The problem has also been studied for Cayley graphs of exponentially growing groups \cite{MR3997950} and on groups of intermediate growth \cite{amir2022branching}.
Other questions relating the game to group theory have been studied \cites{amir2023fire,MR4549803}. 

The Firefighter Problem has been widely considered also on finite graphs where one wishes either to reduce the number of burning vertices when the process terminates, or to minimise the time it takes to contain the fire. This has been studied mainly from an algorithmic point of view, showing that the problems are NP-hard
\cites{MacGillivray2003Ontheff,finbow2007firefighter,king2010firefighter}, but could be approximated up to a constant factor in polynomial time \cites{adjiashvili2018firefighting,Approx2,hartke2004attempting,HL00,iwaikawa2011improved}.
These results are surveyed in~\cite{finbow2009firefighter}.

\textbf{Precursors of current work.}
The idea of studying a restricted fire model arose in the context of the original Firefighter Problem on finite graphs. After being suggested by Devlin and Hartke~\cite{develin2007fire}, the constant $g$,  $\str=1$ case of the Containment Game suggested here, played on finite $G$, was studied by Bonato, Messinger and Pra\l{}at~\cite{bonato2012fighting}. There the quantity of interest was the expected percentage of surviving vertices under optimal play, when the initial occupied set consists of a single uniformly chosen vertex.

\textbf{Other pursuit games on graphs.} Many other pursuit games on graphs have been studied, both for applicative reasons and as a method of getting insight into graph connectivity properties. We provide references to several prominent examples, somewhat related to the game studied here.
The Burning Number of a Graph~\cite{bonato2016burn} is a solitaire game initialised with an empty burning set, where at every time-step, neighbours of burning vertices are added to the burning set along with a single additional vertex chosen by the player. The goal here is to reduce the number of rounds required to burn all vertices in the graph. Cops and Robbers is another two-player pursuit game, where Container is moving a set of blocked vertices (cops) along the graph's edges trying to catch a finite number of Spreader (robbers) vertices. See~\cite{bonato2011game} for a monograph on this game. 
Finally, Invisible Rabbits~\cite{abramovskaya2016hunt} is an oblivious variant of the cops and robbers game, where the cops' movements are not restricted by the edges of the graph, but Container is oblivious to the location of the robbers and must nevertheless catch them.


\subsection{Open problems}\label{subsection: discussion}

We pose several open problems concerning the Containment Game and its variants. 

Firstly, we would be most interested in the following problem.
\begin{problem}
Close the gap between Theorem~\ref{thm: sub-linear win} and Theorem~\ref{thm: root-fire loses}. 
\end{problem}
\noindent We conjecture that the lower bound in Theorem~\ref{thm: root-fire loses} is tight, so that $q(G,t^{1/2+\epsilon})=q(G,\infty)$ for all $\epsilon>0$. 

In addition, the proof of Theorem~\ref{thm: root-fire loses} merely show  $1\in q(\Zei,\infty)\setminus q(\Zei,Ct^{1/2})$  and   $3\in q(\ZZ,\infty)\setminus q(\ZZ,Ct^{1/2})$.
This leaves us with the following.
\begin{problem}
Is $\inf q(G,t^{1/2-\epsilon}) < \inf q(G,\infty)$ for $G\in \{\Zei,\ZZ\}$ and every $\epsilon>0$? 
\end{problem}

\textbf{Constant expansion.}
Here we have analysed the parameter range for which the Containment Game on a graph $G$ satisfies $q(G, g) = q(G,\infty)$. It would be interesting to consider the more general question of recovering the dependence of $q(G, g)$ on $g$. 
The case of constant expansion, i.e. $g\equiv k$, is of particular interest, both for its tighter relations with Conway's angel problem and for its elegance, pitting a constant power Spreader against a constant power Container.
It can be shown, by means similar to the proof of Theorem~\ref{thm: root-fire loses}, that Container wins the game $(\ZZ,k,3-\tfrac{c}{k})$ for some $c$ independent of $k$. As this bound tends to $3$ as $k$ tends to infinity, the following question stands.
\begin{problem}
What is $\liminf_{k\to\infty} q(\ZZ,k)$? Is it strictly less than $3$?
\end{problem}

\textbf{High dimensions.} 
By~\cite{AMIR2020112077} we know that the critical number of firefighters needed to contain the fire in the strongly connected $\Z^d$ is $\Theta(t^{d-2})$, and recovering the exact constant seems within reach (using techniques from \cite{feldheim20133}). One can ask what is the critical spreading function that would be equivalent to unrestricted spreading in this case and is it asymptotically smaller, namely:
\begin{problem}
Is there $g(t)=o(t^{d-1})$ which is equivalent to unrestricted spreading in the strongly connected $\Z^d$ graph?
\end{problem}

In general, we suspect that for all groups of sub-exponential growth, some $g(t)$ which grows asymptotically slower than the isoperimetric profile should be able to imitate unrestricted fire spreading.

\textbf{Probabilistic variant.} It should also be mentioned that the setting of probabilistic spread where the fire spreads to a neighbour with constant probability at every turn, appears not to have been studied so far. The realistic application of this setting, along with natural limiting shape questions and relations to classical models in particle systems, make this a rather appealing variant to study.

\subsection{Overview of the Spreader strategy}

We begin by outlining Spreader's strategy on $\Zei$. The goal here is to simulate the perimeter of the fire in the Firefighter Problem's setting, retaining the lower bound of~\cite{fogarty2003catching}, using a sparse occupied set. To do so, Spreader will always occupy vertices on a single row $\Z\times\{t\}$ progressing northwards at every time-step. 
In the bulk of this occupied set, unhindered by Container's deletions, these vertices are evenly spaced, being $h_t$ apart, forming a sparse array.
Every such vertex is viewed as simulating the fire in a horizontal segment of length $h_t$ in $\Z\times\{t+h_t\}$.
To allow the occupied set to grow sub-linearly growing, $h_t$ will double, once in a while, by discontinuing the evolution of every second vertex.

The key observation is that this simulation is faithful to the evolution of the fire as analysed in \cite{feldheim20133}, as long as Container deletes in time-step $t$ vertices that are north of the line $\Z\times\{t+h_t\}$. In particular, blocking a single occupied vertex would require Container to use at least $h_t$ deletions. 

Vertices deleted in proximity to the occupied front, however, can block occupied vertices much more efficiently; these are called \emph{disruptions}. Here another observation, simple, but never used before, comes into play. It is highly inefficient for the player in the Firefighter Problem to create local cavities in the fire, as these close after a short while, without any impact on the fire-front's shape. Spreader takes advantage of this in the following way. Whenever a disruption occurs, nearby vertices start spreading to all of their neighbours in the front, acting like the fire for a predetermined time interval. This is sufficient to compensate for the efficient blocking of occupied vertices. In order to avoid trouble near the eastern and western boundaries of the fire-front, vertices close to these boundaries always spread to all of their neighbours in the front. As each of the fully spreading regions returns to the simulative normality after a prescribed number of steps, their number is controlled and the occupied set remains rather sparse.  

We also present a variant of this strategy on the \emph{directed half-plane} $\Zhp = (V,E)$, the sub-graph in which vertices are connected to their immediate neighbours in the north, north-east and north-west. This is then used as a building block in the strategy on $\ZZ$, which closely imitates that used in \cite{feldheim20133}.

For technical reasons, it is more convenient to present this strategy in two steps. Defining at first ``fully spreading regions'' which are a bit large, and then avoiding the occupation of any vertex that is disconnected from the infinite component of the graph.

\subsection{Overview of the proof}
The proof of Theorem~\ref{thm: sub-linear win} in $\Zei$ consists of two parts -- proving that Spreader's strategy is sparse, namely bounding the number of vertices occupied by Spreader at every time-step, and proving that the strategy is winning, namely showing that the occupied set is never empty.

To prove the first part, we bound the total number of excess occupied vertices caused by disruptions. This we do by showing that only a bounded number of disruptions can occur at each time-step, and that every such disruption results in a bounded increase in the number of occupied vertices for a limited number of time-steps.

To prove that the strategy is winning, we use a potential-type argument. We assign each segment in Spreader's array with a potential, summing the amount of fire it currently simulates and the number of deleted vertices played above it. We show that, uninterrupted, this potential never decreases for all segments, and moreover grows by at least $1$ globally.

Disruptions in Spreader's array may, however, decrease this potential by reducing the amount of simulated fire, using only a small number of deleted vertices. We show that this loss, however, must always be matched by a later increase in potential in another segment. We hence introduce an additional \emph{debt} function, which is used for tracking these computations. We show that these debts, once obtained, are always reduced to zero within a bounded number of time-steps and before the occupied set is eliminated.

\subsection{Notations}
We use the convention $\N := \{0,1,2,\dots\}$.
For any $x\in \R$ we denote $x_+ := \max\{x,0\}$. 
$\Delta g_t$ denotes the backwards difference of $g$, defined by $\Delta g_t:= g_t - g_{t-1}$ or $\Delta g_t:= g_t \setminus g_{t-1}$.
Throughout, we naturally extend functions from elements to sets by following the convention that if $g$ has numerical output, we set $g(A)=\sum_{x\in A} g(x)$, while if it produces sets, we set $g(A)=\bigcup_{x\in A}g(x)$.
Finally, we follow the standard convention that $\max \emptyset = -\infty$ and $\min \emptyset = +\infty$.

\subsection{Outline of the paper} 
Section~\ref{section: eighth plane} is dedicated to the proof of Theorem~\ref{thm: sub-linear win} on $\Zei$, including the definition of Spreader's strategy and its analysis. In Section~\ref{section: directed half plane} we introduce the game on the directed half-plane graph $\Zhp$ and generalise the results of Section~\ref{section: eighth plane} to this setting. In Section~\ref{section: plane} we compose a winning strategy on the entire $\ZZ$ graph from the strategy of Section~\ref{section: directed half plane}, completing the proof of Theorem~\ref{thm: sub-linear win}.
Section~\ref{section: upper bounds}, which is independent of the other sections, is dedicated to the proof of Theorem~\ref{thm: root-fire loses}.
The paper is accompanied by a notation table, provided as an appendix.

\section{The Eighth Plane}\label{section: eighth plane}

This section is dedicated to proving Theorem~\ref{thm: sub-linear win} for $G=\Zei$. We begin by describing a winning Spreader strategy on the graph, then reduce the theorem to several key propositions.



\textbf{Outline of this section.} 
In Section~\ref{subsection: Zei strategy} we present the winning Spreader strategy, applying one of two local strategies in each segment,  postponing the precise definition of $\spr_t$, the function which decides between them.
In Section~\ref{subsection: fire simulation} we establish several claims, used to control the evolution of the simulated fire.
In Section~\ref{subsection: chi}
we provide the complete definition of $\spr_t$  and derive basic properties of our strategy.
In Section~\ref{subsection: sparsity} we slightly modify our strategy to reduce the number of occupied vertices used, and establish upper bounds on their number.
In Section~\ref{subsecion: strategy analysis} we associate deleted vertices with segments, defining for each segment three key functions: simulated fire count, number of deleted vertices, and debt function used for amortised bookkeeping. Using these we reduce Theorem~\ref{thm: sub-linear win} to several propositions.
Sections~\ref{subsection: analysis} to~\ref{subsection: phi < r} are dedicated to the formal definition of each of these functions, and to the analysis of their evolution, establishing the proposition of Section~\ref{subsecion: strategy analysis}.

\subsection{Winning Spreader strategy}\label{subsection: Zei strategy}

Recalling that the set of (Spreader) occupied vertices at time-step $t$ is denoted by $B_t$, the Spreader strategy will always satisfy $B_t\subset \Z\times\{t\}$. We therefore refer to the row  $\Z\times\{t\}$ as the \emph{front} at time-step $t$. 
Knowing that the firefighter player of strength $\str=1$ loses the Firefighter Problem game on $\Zei$ (by~\cites{fogarty2003catching}), the Spreader strategy strives to imitate the fire evolution using a smaller occupied set, so that each occupied vertex represents a fire segment of size $h_t$ in $\Z\times\{t+h_t\}$, where $h:\N\to\N$ is a monotonically non-decreasing function. To introduce the strategy formally we require several definitions. For convenience we assume throughout $q\ge 1$.


\textbf{Segments.} We partition $\Z$ into intervals of size $h_{t}$ of the form $\{0,\dots,h_{t}-1\}+h_{t}\Z$. The set of segments at time-step $t$ is denoted by $\seg_t$. Given an interval $I\subset \Z$ we denote $\seg_t(I) := \{S\in \seg_t : S\cap I \ne \emptyset\}$.

Segments naturally inherit the order of $\Z$, so that $S_1,S_2\in \seg_t$ satisfy $S_1<S_2$ if for all $s_1\in S_1,s_2\in S_2$ we have $s_1<s_2$.
We denote contiguous closed (similarly, open and half-open) intervals of segments by $[S_1,S_2] := \{S\in\seg_t : S_1 \le S \le S_2\}$, when $S_1,S_2 \in \seg_t$.
Given $x\in\Z$, denote by $S_t(x)$ the unique segment in $\seg_t$ containing $x$, and extend this to $\Z^2$ by setting $S_t((x,y)) := S_t(x)$. 

\textbf{Doubling segments.} For the construction and analysis of the sub-linear Spreader strategy used to establish Theorem~\ref{thm: sub-linear win}, we will allow the size of the segments $h$ to depend on $t$. To make the analysis simpler, we double the value of $h$ once in a while, rather than change it gradually, so that $h : \N\to\N$ satisfies $h_{t}/h_{t-1}\in \{1,2\}$. Given such $h$ we denote $\tms_{i}  = \tms_i(h) := \{t\in \N : h_{t}/h_{t-1} = i\}$ for $i=1,2$, such that $\{0\}\cup \tms_1 \cup \tms_2 = \N$. We refer to $\tms_2$ as \emph{doubling times}.
Throughout, we make the assumption that $h$ is sufficiently large, namely that $h_0>100$.
Denoting $H_t := 11\str h_t^2$,
we make the assumption that doubling times of $h$ do not occur too often, namely,
\begin{equation}\label{eq: doubling condition}
\text{If $t\in \tms_2$, then for all $s\in [t-2H_t,t+2H_{t}]$ we have $s\notin \tms_2$.}
\end{equation}
This will be satisfied by our final choice of $h$ in the proof of Theorem~\ref{thm: sub-linear win}.

Over the course of time smaller dyadic segments in $\seg_s$ joint to form the segments in $\seg_t$ for $t>s$. Thinking of these smaller segments as ancestors of their larger counterparts, given $S\in \seg_t$ we write $\anc_s(S):=\{S'\in \seg_s\ :\ S'\subset S\}$.
For functions $f'$ on $\seg_t$ defined below ($\phi_t(S),d_t(S),F_t(S)$, etc.), for $s<t$ we define,  
$f'_s(S):=\sum_{S'\in \anc_s(S)}f'_s(S')$, if $f'$ yields numerical value, and 
$f'_s(S):=\cup_{S'\in \anc_s(S)}f'_s(S')$ if it produces sets.


\subsubsection{Paths and the evolution of $B_t$}

Our strategy divides $\seg_t$ into two categories. The first category is \emph{spreading segments}, indicated by $\spr_t(S)=1$, the occupied vertices of which spread to all of their neighbours. The second category is \emph{simulative segments}, indicated by $\spr_t(S) = 0$, in which a single occupied vertex simulates the fire on a segment of $\Z\times\{t+h_t\}$. This vertex will be chosen to be the leftmost possible vertex which contains an unblocked path forwards. 
Let us define this notion of a path formally.

A \emph{$t$-path of length $\ell$} is a path $(x_0,y_0),(x_1,y_0+1)\dots,(x_\ell,y_0+\ell) \subset G_t$. We say that such a path is \emph{emanating from the column $x_0$.} Given a set of columns $I$, we denote the set of $t$-paths of length $\ell$ whose vertices are in $I\times \Z$ by $P_t^\ell(I)$.

The fire simulated in $S\times \{t+h_t\}$ corresponds to the end-points of $t$-paths starting from occupied vertices in $B_t(S)$. To show that such fire could indeed be effectively simulated, we must keep track of the number of such paths available, accounting for deleted vertices and for the evolution of the fire. 

Let $t,\ell, s\in\N$ and $A\subset I \times \{s\}$.
Denote $r_t^\ell(A;I):=|\{x\in G_t\ :\ \exists (x_0,\dots,x)\in P_t^\ell(I), x_0\in A|$.
The evolution of $B_t$ is hence as follows.
\begin{equation}\label{eq: Bt def eighth plane}
    B_{t}(S) := (S\times \{t\})\cap
    \begin{cases}
        \left(B_{t-1}+ \{(0,1),(1,1)\}\right) \setminus \FF_{t} & \text{if $\spr_t(S)=1,$}\\
        \min \{x\in B_{t-1} + \{(0,1),(1,1)\}\ :\ \rng_t^{h_t}(\{x\};S) > 0\}
        & \text{if $\spr_t(S)=0$},\\
    \end{cases}
\end{equation}
where the exact definition of $\spr_t(S)$ is provided in the next section.
Given $B_t(S)=B\times \{t\}$, we write $\min B_t(S)=(\min B,t)$ (similarly $\max B_t(S)=(\max B,t)$).

With applications in Section~\ref{section: directed half plane} in mind, we also present the Spreader strategy on the \emph{directed half-plane} $\Zhp = (V,E)$, the sub-graph of $\ZZ$ restricted to
\[V = \{(x,y)\in \Z^2 : y \ge 0\},\,\,\, E = \{\left((x,y), (x+i,y+1)\right) : |x| \le y, i\in\{-1,0,1\}\}.\]
The strategy is almost identical to~\eqref{eq: Bt def eighth plane}, except for the fact that in a spreading segment, the leftmost occupied vertex will spread also northwestwards. Namely,
\begin{equation}\label{eq: Bt def directed half plane}
    B_{t}(S) := (S\times \{t\})\cap
    \begin{cases}
        \big(\left(B_{t-1}+ \{(0,1),(1,1)\}\right) \cup \{\min B_{t-1}+(-1,1)\}\big) \setminus \FF_{t} & \text{if $\spr_t(S)=1,$}\\
        \min \{x\in B_{t-1} + \{(0,1),(1,1)\}\ :\ \rng_t^{h_t}(\{x\};S) > 0\}
        & \text{if $\spr_t(S)=0.$}\\
    \end{cases}
\end{equation}

In order to unify the treatment of $\Zei$ and $\Zhp$, all of our analysis, except for Section~\ref{subsection: sparsity}, will rely only on the following property common to both models. 
\begin{equation}\label{eq: Bt def contains}
\begin{aligned}
    B_t(S) &\supseteq (S\times \{t\}) \cap \left(B_{t-1}+ \{(0,1),(1,1)\}\right) \setminus \FF_{t} \quad &\text{if $\spr_t(S)=1$},\\
    B_t(S) &= (S\times \{t\}) \cap \min \{x\in B_{t-1} + \{(0,1),(1,1)\}\ :\ \rng_t^{h_t}(\{x\};S) > 0\} \quad  &\text{if $\spr_t(S)=0$}.
\end{aligned}
\end{equation}

\subsection{Fire simulation}\label{subsection: fire simulation}
Next, we establish several claims, used to control the evolution of the simulated fire.

Firstly, we observe that at least $r$ vertices must be deleted in order to block out
a set $A$ satisfying $\rng_t^\ell(A;I) \ge r$.

\begin{clm}\label{clm: faithful upwards reduction}
    Let $A\subset [a,b]\times\{s\}$ and $0\le\ell_1<\ell_2$. We have $\rng_t^{\ell_2}(A;[a,b]) \ge \rng_t^{\ell_1}(A;[a,b]) -\big|\FF_t([a,b] \times [s+\ell_1+1,s+\ell_2]) \big|.$
\end{clm}

\begin{proof}
    Extend each of the $\rng_t^{\ell_1}(A;[a,b])$ many $t$-paths starting from vertices in $A$ and leading to the vertices in $[a,b]\times\{s+\ell_1\}$ upwards by vertical columns up to $[a,b]\times\{s+\ell_2\}$. Every such extended path is either a new $t$-path, or
    it must contains a vertex in $\FF_t([a,b] \times [s+\ell_1+1,s+\ell_2])$. As these paths are disjoint in $\FF_t([a,b] \times [s+\ell_1+1,s+\ell_2])$, the claim follows.
\end{proof}

Next, we show that $\rng_t^\ell(A;I) > 0$ is sufficient to guarantee either that $\rng_t^\ell(A;I)$ is large, or the deletion of many vertices in 
$[a,b]\times[s,t+\ell]$.

\begin{clm}\label{clm: accecibility implies faithful}
    Let $[a,b]\times\{t\}\subset \ZZ$. 
    If $\rng_t^\ell(\{(a,t)\}; [a,b]) > 0$ then \[\rng_t^\ell(\{(a,t)\}; [a,b]) \ge \min(\ell,|[a,b]|) - |\FF_t([a,b]\times[t,t+\ell])|.\]
\end{clm}
\begin{proof}
    Denote a $t$-path between $(a,t)$ and $S\times \{t+\ell\}$, by 
    \[P = (x = (p_0,t), (p_1,t+1),\dots,(p_{\ell},t+\ell),\]
    observing that such a path exists, as $\rng_t^\ell(\{(a,t)\}; [a,b]\times\{t\}) > 0$. 
    Writing $w = \min(\ell,|[a,b]|)$, we construct a family of $w$ many paths in $[a,b]\times[t,t+\ell]$, each of which begins with a prefix of $P$, concatenated to a path in $[a,b]\times[t,t+\ell]\setminus P$, disjoint from all others (see the accompanying Figure~\ref{fig:path comb}). This we do as follows.    
    Define a vector $(a_0,\dots,a_{\ell-1}) \in \{0,1\}^{\ell}$ via $a_j := p_{j+1} - p_j$ and $\ell$-tuples $a^1,\dots,a^{w}$ by
    \[a^{k}_j := \begin{cases}
    a_j & \text{if $j < \gamma_k$},\\
    1- a_k & \text{if $j \ge \gamma_k$},
\end{cases} \quad \text{where} \quad \gamma_k := \min \{m\ :\ p_m\text{ is not connected in $\Zei$ to $(p_0+k-1,t+\ell)$} \}.\]
Consider the paths $((p_0,t), (p_0 + a^{k}_0, t+1),\dots, (p_0 + a^{k}_{\ell-1}, t+\ell))$ for $k = 1,\dots,w$.
As these paths are disjoint outside $P$, every deleted vertex in $[a,b]\times[t,t+\ell]$ can block at most one of these paths. Hence \[\rng_t^\ell(\{(a,t)\}; [a,b]\times\{t\}) \ge w - |\FF_t([a,b]\times[t,t+\ell])|. \qedhere\]
\end{proof}

{
\begin{figure}[b!]
    \centering
    \subfloat[\label{fig:leftmost pivot}]{
    \begin{tikzpicture}[scale=\sca*0.8]
        \draw[step=1cm,grayX,thin] (0,0) grid (7,7);
        
        \draw[black, very thick,fill=none] (0,0) rectangle (7,1); 
        \node at (0.5,0.5) {\Fire};

        \draw [line width=0.3mm] (0.5,0.5) -- (3.5,3.5);
        \draw [-stealth, line width=0.3mm] (3.5,3.5) -- (3.5,6.5);
        
        \draw [-stealth, dashed, line width=0.3mm] (0.5,0.5) -- (0.5,6.5);
        \draw [-stealth, dashed, line width=0.3mm] (1.5,1.5) -- (1.5,6.5);
        \draw [-stealth, dashed, line width=0.3mm] (2.5,2.5) -- (2.5,6.5);
        \draw [-stealth, dashed, line width=0.3mm] (3.5,3.5) -- (6.5,6.5);
        \draw [-stealth, dashed, line width=0.3mm] (3.5,4.5) -- (5.5,6.5);
        \draw [-stealth, dashed, line width=0.3mm] (3.5,5.5) -- (4.5,6.5);


    \end{tikzpicture}}
    \hspace{8em}
    \subfloat[\label{fig:middle pivot}]{
    \begin{tikzpicture}[scale=\sca*0.8]
        \draw[step=1cm,grayX,thin] (0,0) grid (7,7);
        
        \draw[black, very thick,fill=none] (0,0) rectangle (7,1); 
        \node at (2.5,0.5) {\Fire};
        
        \draw [line width=0.3mm] (2.5,0.5) -- (2.5,1.5);
        \draw [line width=0.3mm] (2.5,1.5) -- (2.5,2.5);
        \draw [line width=0.3mm] (2.5,2.5) -- (3.5,3.5);
        \draw [line width=0.3mm] (3.5,3.5) -- (3.5,4.5);
        \draw [line width=0.3mm] (3.5,4.5) -- (4.5,5.5);
        \draw [-stealth, line width=0.3mm] (4.5,5.5) -- (5.5,6.5);
        
        \draw [-stealth, dashed, line width=0.3mm] (2.5,2.5) -- (2.5,6.5);
        \draw [-stealth, dashed, line width=0.3mm] (3.5,3.5) -- (6.5,6.5);
        \draw [-stealth, dashed, line width=0.3mm] (3.5,4.5) -- (3.5,6.5);
        \draw [-stealth, dashed, line width=0.3mm] (4.5,5.5) -- (4.5,6.5);


    \end{tikzpicture}}
    
    \caption{Two possible cases for the location of $(a,t)$ within $S$, and the corresponding paths. In each sub-figure, the central path $P$ is indicated by an unbroken line, while the other paths in are indicated by dashed lines.
    }
    \label{fig:path comb}
\end{figure}
}

\textbf{Counting disjoint $t$-paths.} 
While $\rng_t^\ell(A;I)$ is concerned with how many $t$-paths emanate from $A$, it is useful to control the number of deleted vertices needed to block all of them. To this end we now count the maximum number of disjoint paths emanating from a set.

Let $A\subset\Z$. Denote the size of the largest collection of pairwise disjoint $t$-paths of length $\ell$ emanating from $B_t(A)$ by $\dispath{t}{\ell}(A)$,  extending this definition to $A\times \{t\}$ via $\dispath{t}{\ell}(A\times\{t\}) = \dispath{t}{\ell}(A)$, for convenience.

We make three straightforward observations. The first observes two general monotonicity properties of $\dispath{t}{\ell}(A)$.
\begin{obs}\label{obs: pathcol mono}
Let $t, \ell, \ell'\in\N$, and $A,B\subset \Z$. The following hold.
\begin{enumerate}[label=\textnormal{(\alph*)}]
    \item\label{obs: pathcol mono, space} If $A\subseteq B$ then $\dispath{t}{\ell}(A) \le \dispath{t}{\ell}(B) \le \dispath{t}{\ell}(A) + |B\setminus A|$.
    \item\label{obs: pathcol mono, length} If $\ell \le \ell'$ then $\dispath{t}{\ell}(A) \ge \dispath{t}{\ell'}(A)$.
\end{enumerate}
\end{obs}
{
\begin{figure}[t]
     \def\aN{55}
    \def\width{8}
    \colorlet{fireColor}{black!17!white}
    \colorlet{ffColor}{black!65!white}
    \definecolor{ffDisColor}{RGB}{30, 30, 30}
    \centering
    \begin{tikzpicture}[scale=\sca/4]

    \foreach \y in {-4,...,3}
    {
        \fill[fireColor] (0,\y) rectangle (\y+1 + \width,\y+1);
    }


    \def \height{4}
    \foreach \x in {0,...,4,8,9,...,12}{
         \fill[fireColor] (\x,\height) rectangle (\x+1,\height+1);
        }
    \def \height{5}
    \foreach \x in {0,...,4,8,9,...,13}{
         \fill[fireColor] (\x,\height) rectangle (\x+1,\height+1);
        }
    \def \height{6}
    \foreach \x in {0,...,4,8,9,...,14}{
         \fill[fireColor] (\x,\height) rectangle (\x+1,\height+1);
        }
    \def \height{7}
    \foreach \x in {0,...,4,8,9,...,15}{
         \fill[fireColor] (\x,\height) rectangle (\x+1,\height+1);
        }
    
    \def \height{8}

    \foreach \x in {0,1,2,3,4,8,12,13,14,15,16}{
         \fill[fireColor] (\x,\height) rectangle (\x+1,\height+1);
        }
    \def \height{9}
    \foreach \x in {0,1,2,3,4,8,12,13,14,15,16,17}{
         \fill[fireColor] (\x,\height) rectangle (\x+1,\height+1);
        }
    \def \height{10}
    \foreach \x in {0,1,2,3,4,8,12,13,14,15,16,17,18}{
         \fill[fireColor] (\x,\height) rectangle (\x+1,\height+1);
        }
    \def \height{11}
    \foreach \x in {0,1,2,3,4,8,12,13,14,15,16,17,18,19}{
         \fill[fireColor] (\x,\height) rectangle (\x+1,\height+1);
        }
         
    \def \height{12}
    \foreach \x in {0,1,2,3,4,8,12,16,17,18,19,20}{
         \fill[fireColor] (\x,\height) rectangle (\x+1,\height+1);
        }
    \def \height{13}
    \foreach \x in {0,1,2,3,4,8,12,16,17,18,19,20,21}{
         \fill[fireColor] (\x,\height) rectangle (\x+1,\height+1);
        }
    \def \height{14}
    \foreach \x in {0,1,2,3,4,8,12,16,17,18,19,20,21,22}{
         \fill[fireColor] (\x,\height) rectangle (\x+1,\height+1);
        }
    \def \height{15}
    \foreach \x in {0,1,2,3,4,8,12,16,17,18,19,20,21,22,23}{
         \fill[fireColor] (\x,\height) rectangle (\x+1,\height+1);
        }

    \def \height{16}
    \foreach \x in {0,1,2,3,4,8,12,16,20,21,22,23,24}{
         \fill[fireColor] (\x,\height) rectangle (\x+1,\height+1);
        }
    \def \height{17}
    \foreach \x in {0,1,2,3,4,8,12,16,20,21,22,23,24,25}{
         \fill[fireColor] (\x,\height) rectangle (\x+1,\height+1);
        }
    \def \height{18}
    \foreach \x in {4}
    {
        \draw[black] (\x+\xmargin,\height+\xmargin) -- (\x+1-\xmargin,\height+1-\xmargin);
        \draw[black] (\x+\xmargin,\height+1-\xmargin) -- (\x+1-\xmargin,\height+\xmargin);
    }
    \foreach \x in {0,1,2,3,5,9,12,16,20,21,22,23,24,25,26}{
         \fill[fireColor] (\x,\height) rectangle (\x+1,\height+1);
        }
    \def \height{19}
    \foreach \x in {0,1,2,3,5,10,12,16,20,21,22,23,24,27}{
         \fill[fireColor] (\x,\height) rectangle (\x+1,\height+1);
        }
    \foreach \x in {8,9,25,26}
    {
        \draw[black] (\x+\xmargin,\height+\xmargin) -- (\x+1-\xmargin,\height+1-\xmargin);
        \draw[black] (\x+\xmargin,\height+1-\xmargin) -- (\x+1-\xmargin,\height+\xmargin);
    }
    \def \height{20}
    \foreach \x in {0,1,2,3,5,10,16,20,24,25,27,28}{
         \fill[fireColor] (\x,\height) rectangle (\x+1,\height+1);
        }
    \def \height{21}
    \foreach \x in {0,1,2,3,5,10,16,20,24,25,26,27,28,29}{
         \fill[fireColor] (\x,\height) rectangle (\x+1,\height+1);
        }
    \def \height{22}
    \foreach \x in {0,1,2,3,5,10,16,20,24,25,26,27,28,29,30}{
         \fill[fireColor] (\x,\height) rectangle (\x+1,\height+1);
        }
    \def \height{23}
    \foreach \x in {0,1,2,3,5,10,16,20,24,25,26,27,28,29,30,31}{
         \fill[fireColor] (\x,\height) rectangle (\x+1,\height+1);
        }
    \foreach \x in {12,13,14,15}
    {
        \draw[black] (\x+\xmargin,\height+\xmargin) -- (\x+1-\xmargin,\height+1-\xmargin);
        \draw[black] (\x+\xmargin,\height+1-\xmargin) -- (\x+1-\xmargin,\height+\xmargin);
    }
         
    \def \height{24}
    \foreach \x in {0,1,2,3,4,5,6,10,11,20,21,24,25,26,27,28,29,30,31,32}{
         \fill[fireColor] (\x,\height) rectangle (\x+1,\height+1);}
    \foreach \x in {16,17}{ 
        \draw[black, ultra thick] (\x+\xmargin,\height+\xmargin) -- (\x+1-\xmargin,\height+1-\xmargin);
        \draw[black, ultra thick] (\x+\xmargin,\height+1-\xmargin) -- (\x+1-\xmargin,\height+\xmargin);}
  \def \height{25}
    \foreach \x in {0,1,2,3,4,5,6,7,10,11,12,24,25,26,27,28,29,30,31,32,33}{
         \fill[fireColor] (\x,\height) rectangle (\x+1,\height+1);}
    \foreach \x in {20,21,22}{ 
        \draw[black, ultra thick] (\x+\xmargin,\height+\xmargin) -- (\x+1-\xmargin,\height+1-\xmargin);
        \draw[black, ultra thick] (\x+\xmargin,\height+1-\xmargin) -- (\x+1-\xmargin,\height+\xmargin);}
   \def \height{26}
    \foreach \x in {0,1,2,3,4,5,6,7,8,10,11,12,13,24,25,26,27,28,29,30,31,32,33,34}{
         \fill[fireColor] (\x,\height) rectangle (\x+1,\height+1);}
    \def \height{27}
    \foreach \x in {0,1,2,3,4,5,6,7,8,9,10,11,12,13,14,24,25,26,27,28,29,30,31,32,33,34,35}{
         \fill[fireColor] (\x,\height) rectangle (\x+1,\height+1);}
    \def \height{28}
    \foreach \x in {0,1,...,15,24,25,...,36}{
         \fill[fireColor] (\x,\height) rectangle (\x+1,\height+1);}
    \def \height{29}
    \foreach \x in {0,1,...,16,24,25,...,37}{
         \fill[fireColor] (\x,\height) rectangle (\x+1,\height+1);}
    \def \height{30}
    \foreach \x in {0,1,...,17,24,25,...,38}{
         \fill[fireColor] (\x,\height) rectangle (\x+1,\height+1);}
    \def \height{31}
    \foreach \x in {0,1,...,18,24,25,...,39}{
         \fill[fireColor] (\x,\height) rectangle (\x+1,\height+1);}
    \def \height{32}
    \foreach \x in {0,1,...,19,24,25,...,40}{
         \fill[fireColor] (\x,\height) rectangle (\x+1,\height+1);}
    \def \height{33}
    \foreach \x in {0,1,...,20,24,25,...,41}{
         \fill[fireColor] (\x,\height) rectangle (\x+1,\height+1);}
    \def \height{34}
    \foreach \x in {0,1,...,3,4,8,9,...,21,24,25,...,42}{
         \fill[fireColor] (\x,\height) rectangle (\x+1,\height+1);}
    \def \height{35}
    \foreach \x in {0,1,2,3,4,8,12,...,40,41,42,43}{
         \fill[fireColor] (\x,\height) rectangle (\x+1,\height+1);}
    
     \def \height{36}
    \foreach \x in {0,1,2,3,4,8,12,...,28,29,32,33,36,37,40,41,42,43,44}{
         \fill[fireColor] (\x,\height) rectangle (\x+1,\height+1);}
    \def \height{37}
    \foreach \x in {0,1,2,3,4,8,12,...,28,29,30,32,33,34,36,37,38,40,41,42,43,44,45}{
         \fill[fireColor] (\x,\height) rectangle (\x+1,\height+1);}

    \def \height{38}
    \foreach \x in {0,1,2,3,4,8,12,...,28,29,30,...,46}{
         \fill[fireColor] (\x,\height) rectangle (\x+1,\height+1);}

    \def \height{39}
    \foreach \x in {0,1,2,3,4,8,12,...,28,29,30,...,47}{
         \fill[fireColor] (\x,\height) rectangle (\x+1,\height+1);}

    \def \height{40}
    \foreach \x in {0,1,2,3,4,8,12,...,28,29,30,...,48}{
         \fill[fireColor] (\x,\height) rectangle (\x+1,\height+1);}

    \def \height{41}
    \foreach \x in {0,1,2,3,4,8,12,...,28,29,30,...,49}{
         \fill[fireColor] (\x,\height) rectangle (\x+1,\height+1);}

    \def \height{42}
    \foreach \x in {0,1,2,3,4,8,12,...,28,29,30,...,50}{
         \fill[fireColor] (\x,\height) rectangle (\x+1,\height+1);}

    \def \height{43}
    \foreach \x in {0,1,2,3,4,8,12,...,28,29,30,...,31}{
         \fill[fireColor] (\x,\height) rectangle (\x+1,\height+1);}
     \foreach \x in {32,...,51}{ 
         \draw[black] (\x+\xmargin,\height+\xmargin) -- (\x+1-\xmargin,\height+1-\xmargin);
         \draw[black] (\x+\xmargin,\height+1-\xmargin) -- (\x+1-\xmargin,\height+\xmargin);}
    
    
    \def \height{44}
     \foreach \x in {0,1,2,3,4,8,16,24,25,28,29,30,...,32}{
         \fill[fireColor] (\x,\height) rectangle (\x+1,\height+1);}
    \def \height{45}
     \foreach \x in {0,...,5,8,16,24,25,28,29,30,...,33}{
         \fill[fireColor] (\x,\height) rectangle (\x+1,\height+1);}
    \def \height{46}
     \foreach \x in {0,...,6,8,16,24,25,26,28,29,30,...,34}{
         \fill[fireColor] (\x,\height) rectangle (\x+1,\height+1);}
    \def \height{47}
     \foreach \x in {0,...,8,16,24,25,26,27,28,29,30,...,35}{
         \fill[fireColor] (\x,\height) rectangle (\x+1,\height+1);}
    \def \height{48}
     \foreach \x in {0,...,8,16,24,25,...,36}{
         \fill[fireColor] (\x,\height) rectangle (\x+1,\height+1);}
    \def \height{49}
     \foreach \x in {0,...,8,16,24,25,...,37}{
         \fill[fireColor] (\x,\height) rectangle (\x+1,\height+1);}
     \def \height{50}
     \foreach \x in {0,...,8,16,24,25,...,38}{
         \fill[fireColor] (\x,\height) rectangle (\x+1,\height+1);}
    \def \height{51}
     \foreach \x in {0,...,8,16,24,25,...,39}{
         \fill[fireColor] (\x,\height) rectangle (\x+1,\height+1);}

    \def \height{52}
     \foreach \x in {0,...,8,16,24,32,33,...,40}{
         \fill[fireColor] (\x,\height) rectangle (\x+1,\height+1);}
    \def \height{53}
     \foreach \x in {0,...,8,16,24,32,33,...,41}{
         \fill[fireColor] (\x,\height) rectangle (\x+1,\height+1);}
    \def \height{54}
     \foreach \x in {0,...,8,16,24,32,33,...,42}{
         \fill[fireColor] (\x,\height) rectangle (\x+1,\height+1);}
    \def \height{55}
     \foreach \x in {0,...,8,16,24,32,33,...,43}{
         \fill[fireColor] (\x,\height) rectangle (\x+1,\height+1);}

    \node at (3+\width,-1) {I};
    \node at (11+\width,7) {II};
    \node at (23+\width,19) {III};
    \node at (33+\width,29) {IV};
    \node at (43+\width,39) {V};
    \node at (53+\width,49) {VI};
    
    \foreach \y in {-4,...,\aN}
    {
        \draw[step=1cm,gray,ultra thin] (0,\y) grid (\y+1+\width,\y+1);
    }
    

    \foreach \x in {0,4,...,\width}
    {
        \draw[black, thick] (\x, 0) -- (\x, \aN-12+1);
    }
    \draw[black, thick] (0, -4) -- (0, \aN-12+1);
    
    \foreach \y in {-4,0,4,...,36}
    {
        \draw[black, thick] (\y+\width, \y) -- (\y+\width, \aN-12+1);
    }

    \foreach \x in {0,8,...,\width}
    {
        \draw[black, thick] (\x, \aN-12+1) -- (\x, \aN+1);
    }
    
    \foreach \y in {0,8,...,\aN}
    {
        \draw[black, thick] (\y+\width, \y) -- (\y+\width, \aN+1);
    }
    
    \foreach \y in {-4,4,16,24,36,44}
    {
        \draw[black, dashed,thick] (0, \y) -- (\y+\width+1, \y);
    }

    \end{tikzpicture}
    \caption{
    Illustration of different phenomena of the main Spreader strategy. The illustrated phenomena are presented in different phases, marked by horizontal dashed lines, while the partition into segments is marked by vertical lines. 
    In \textbf{Phase I}, all segments, being close to the boundary, are spreading.
    In \textbf{Phase II}, the inner segments become simulative, and their occupied vertices consolidate to single occupied vertices. 
    In \textbf{Phase III}, the occupied vertices move in response to deleted vertices, those possessing an unblocked path move along it, while those who do not cease playing. All deleted vertices of this phase were removed at least $h$ vertices away from the front, hence simulative segments in their vicinity remain simulative.
    In \textbf{Phase IV}, two disruptions occur, one after another. This causes the segments in their vicinity to become spreading for a while, until they consolidate once again at the end of the step.
    In \textbf{Phase V}, the right boundary is blocked by non-disruptive deleted vertices. This causes the segments at the boundary to become spreading, such that at the end of the step there is still a bulk of occupied vertices at the boundary. 
    \textbf{Phase VI} opens with a doubling. In the boundary this causes some segments to become spreading, while the inner simulative segments have half of their vertices discontinued.}
    \label{fig:fire general evolution}
\end{figure}
}
The second is concerned with monotonicity with respect to time in regions consisting of spreading segments. It is obtained straightforwardly from~\eqref{eq: Bt def contains} together with the fact that at most $\str$ vertices are deleted at every time-step.
\begin{obs}\label{obs: pathcol mono time}
Let $t,t',\ell\in\N$, and $A\subset \Z$, and assume that $\spr_{\tau}(S')=1$ for all $t'\le \tau\le t$ and all $S'\in \seg_{\tau}(A)$. Then $\dispath{t}{\ell-(t-t')}(A) \ge \dispath{t'}{\ell}(A)-\lfloor\str t\rfloor+\lfloor\str t'\rfloor$.
\end{obs}

The third notes that paths emanating from distant columns must be disjoint.
\begin{obs}\label{obs: far paths no intersection}
Let $P$ and $Q$ be $t$-paths of length $\ell$, emanating from columns $x$ and $y$ respectively.
If $|x - y| \ge \ell$ then $P$ and $Q$ are disjoint. 
\end{obs}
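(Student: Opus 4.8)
The plan is to exploit the rigid level structure of $(t,\ell)$-paths: by definition, both $P$ and $Q$ consist of exactly one vertex in each row $\Z\times\{t+k-1\}$ for $k=1,\dots,\ell$, with the $k$-th vertex sitting in row $t+k-1$. Consequently, if $P$ and $Q$ share a vertex at all, that vertex must be the $j$-th vertex of both paths for a common index $1\le j\le\ell$. So I would start by assuming for contradiction that $P$ and $Q$ intersect, fix such a common vertex $(z,t+j-1)$, write $P=((x_1,t),\dots,(x_\ell,t+\ell-1))$ with $x_1=x$ and $Q=((y_1,t),\dots,(y_\ell,t+\ell-1))$ with $y_1=y$, and record that $x_j=y_j=z$.

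The quantitative heart of the argument is a bound on the horizontal displacement $x_j-x_1$ along an $i$-sided path. Since the increments $x_{k+1}-x_k$ lie in $\{0,1\}$ when $i=1$ and in $\{-1,0,1\}$ when $i=2$, after $j-1$ steps the displacement $x_j-x_1$ lies in an interval of length $i(j-1)$ — concretely $[0,j-1]$ when $i=1$ and $[-(j-1),j-1]$ when $i=2$ — and the same holds for $y_j-y_1$. Using $x_j=y_j$ I rewrite $x-y=x_1-y_1=(y_j-y_1)-(x_j-x_1)$; since both $y_j-y_1$ and $x_j-x_1$ lie in the same interval of length $i(j-1)$, this gives $|x-y|\le i(j-1)\le i(\ell-1)<i\ell$, contradicting the hypothesis $|x-y|\ge i\ell$. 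Hence $P$ and $Q$ cannot intersect.

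I expect no real obstacle: this is an elementary bookkeeping fact. The only point worth a little care is handling the one-sided and two-sided cases uniformly, which the "displacement lies in an interval of length $i(j-1)$" formulation does cleanly, and which also makes transparent why the factor $i$ appears in the hypothesis.
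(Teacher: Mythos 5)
Your argument is correct and is exactly the elementary displacement bound that the paper leaves implicit (the observation is stated without proof): the shared row forces a common index $j$, and $|x-y|=|(y_j-y_1)-(x_j-x_1)|\le i(j-1)<i\ell$ gives the contradiction. Nothing further is needed.
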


\subsection{Evolution of \texorpdfstring{$\spr(S)$}{chi(S)}}\label{subsection: chi}

In this section we describe the conditions under which a segment $S$ transitions between simulative and spreading behaviour. This description is accompanied by Figure~\ref{fig:fire general evolution}, useful throughout the paper for getting a more intuitive grasp of the strategy's mechanics.

By default segments are simulative and they become spreading under two circumstances. When container deletes a vertex within vertical distance $h_t$ from a nearby occupied vertex (which we call a \emph{disruption}), or when the segment is close to the edges of the occupied region. In both cases proximity is measured in terms of $\dispath{t}{\ell}$.
Once a segment becomes spreading, it resets a countdown, after which it returns to be simulative by retaining a single occupied vertex, an operation called \emph{consolidation}. 

The set of segments disrupted at time-step $t$ is denote by
\[\dis_t := \Big\{S\in\seg_t : (\FF_t\setminus\FF_{t-1}) \cap \left(S\times [t,t+h_{t}-1]\right) \ne \emptyset\Big\}.\]

\textbf{Alert interval.} 
A disruption at time-step $t$ at $S$ causes all segments contained in an interval $I_t(S)\subset \Z$ to become spreading for a prescribed period of time.
We call $I_t(S)$ the \emph{alert interval} of $S$. Formally,  given $s\le t$ and $S\in\seg_t$ we define
\begin{equation}\label{eq: alert-interval-def}
    I_s(S) := \left[\max\{ x\in h_s\Z : \dispath{s}{\H_t}\big([x,\min S)\big) \ge H_t\},\ \min\{x\in h_s\Z : \dispath{s}{\cint\H_s}\big((\max S, x)\big) \ge 5 \H_s\}\right].
\end{equation}

\textbf{Consolidation countdown.}
We inductively define an additional auxiliary function, the \emph{consolidation-countdown} of a segment $S$, which we denote by $\ct_t(S)$. 
This function measures the number of time-steps remaining until a spreading segment returns to simulative status, unless further disruptions ensue, in which case it resets to $H_t$, or the segment becomes close to the edge of the occupied set, in which case it resets to $h_t$. The evolution of $\spr_t(S)$ for $S\in\seg_{t}$, is thus defined via
\begin{equation}\label{eq: nsim-def}
  \spr_t(S) := \ind\{\ct_t(S) > 0\}, 
\end{equation}
where
\begin{equation}\label{eq: countdown-def}
\ct_t(S) := \begin{cases}
    \H_{t} & \text{if exists $S'\in\dis_{t}$ s.t. $S \subset I_{t}(S')$},\\
    \h_{t} & \text{otherwise, if $\left|I_{t}(S)\right| = \infty$},\\
    (\ct_{t-1}(S)-1)_+ & \text{otherwise.}
    \end{cases}
\end{equation}
In order to handle doubling times we extend the definition of $\ct_{t-1}(S)$ for $t\in\tms_2$, with $\anc_{t-1}(S) = \{A,B\}$ for $A,B\in\seg_{t-1}$ via:
\[
\ct_{t-1}(S) := \begin{cases}
        0 & \text{if $\ct_{t-1}(A) = \ct_{t-1}(B) = 0$}, \\
        \max\left(h_{t-1}+1, \ct_{t-1}(A),\ct_{t-1}(B)\right) & \text{if $\ct_{t-1}(A) > 0$ or $\ct_{t-1}(B) > 0$}.
    \end{cases}
\]
Accordingly, $\spr(S)$ for $S\in \seg_t$ is extended to times $s < t$ via $\spr_s(S):=\max\{\spr_s(S')\ :\ S'\in \anc_s(S)\}$, and to times $s > t$ via $\spr_s(S):=\spr_s(S')$ where $S'\in \seg_s$ is unique segment such that $S'\supset S$.
For $S\in \seg_0$ we set $\tau_0(S)=1$.

This extension of $\ct_{t-1}(S)$ is intended to guarantee the following observation, which will prove useful in future technical calculations.
\begin{obs}\label{obs: countdown large after doubling}
Let $t\in\tms_2$ and $S\in\seg_t$. Then either $\ct_{t-1}(S) = 0$ or $\ct_{t-1}(S) > h_{t-1}$.
\end{obs}

For future use, we establish the following proposition, stating that having relatively few occupied vertices on one side of a segment $S$ implies that $S$ is simple.

\begin{prop}\label{prop: simulated path}
Let $s,w\in\N$, 
such that $0\le w-s<H_w/8$ and $S\in\seg_w$ such that
\[\max(b_{w}((-\infty,S]),b_{w}([S,\infty)) < \H_{w}/8.\]
Then $\spr_{s} = 1$.
\end{prop}

\begin{proof}

To prove the proposition, we first establish the following claim.

\begin{clm}\label{claim: simulated path}
Let $t\in\N$ and $S\in\seg_t$ such that $\spr_{t}(S) = 0$. 
For all $t \le  t' < t+\H_{t}$ we have
\[b_{t'}((-\infty,S]) \ge \H_{t} - \floor{\str t'}+\floor{\str t}\quad \text{ and } \quad b_{t'}([S,\infty)) \ge \cint\H_{t} - \floor{\str t'}+\floor{\str t}.\]
\end{clm}

We prove only that $b_{t'}((-\infty,S]) \ge \H_{t} - \floor{\str t'}+\floor{\str t}$ for all $t' \in [t,t+\H_t)$, as the bound for $b_{t'}([S,\infty))$ follows similarly.
For $t' \in [t,t+\H_t)$ denote $\kappa_{t'}:=\H_{t} - \floor{\str t'}+\floor{\str t}$ and $x_{t'}:=\inf\{x\in \Z\ :\ \dispath{t'}{\H{t}-t'+t}((-\infty, x))\ge \kappa_{t'}\}$,
so that
$S_t(x_{t'})\le S$ is equivalent to $\dispath{t'}{\H{t}-t'+t}((-\infty, S])\ge \kappa_{t'}$. Hence, By Observation~\ref{obs: pathcol mono}, to establish the proposition, it suffices to to show  $S_t(x_{t'})\le S$. 

To prove that $S_t(x_{t'})\le S$, 
we show that $S_t(x_{t})\le S$ and that $S_t(x_{t'})$ is weakly monotone decreasing in $t'\in [t,t+\H_t)$. For the former observe that
$\dispath{t}{\H_{t}}((-\infty, S)) \ge \H_{t}$ by our assumption that $\spr_{t}(S)=0$, together with \eqref{eq: alert-interval-def} and the second case of~\eqref{eq: countdown-def}. 
For the latter, 
observe that by the definition of $\spr$ we have $\spr_{t'}(S_{t'}(x_{t'}))=1$. Assume towards obtaining a contradiction,  $S_t(x_{t'+1})>S_t(x_{t'})$ so that, by the second case of~\eqref{eq: countdown-def}, 
$\spr_{t'+1}(S_t(x_{t'}))=1$. 
This would imply that
$\dispath{t'+1}{\H{t}-(t'+1-t)}(-\infty,S_t(x_{t'})] < \kappa_{t'+1},$
in contradiction with 
\[\dispath{t'+1}{\H{t}-(t'+1-t)}(-\infty,S_t(x_{t'})] \ge\dispath{t'}{\H{t}-(t'-t)}(-\infty,S_t(x_{t'})] - \floor{\str (t'+1)}+\floor{\str t'} \ge \kappa_{t'} - \floor{\str (t'+1)}+\floor{\str t'} = \kappa_{t'+1},\]
where the first equality uses Observation~\ref{obs: pathcol mono time}, applied with $t\leftarrow t'+1$, $\ell\leftarrow\H_{t}-(t'-t)$ and $I\leftarrow(-\infty,S_t(x_{t'})]$).
Hence $S_t(X_t')$ is monotone decreasing. The claim follows.

To deduce the proposition, apply the claim contra-positively with $t'\leftarrow w$,$t\leftarrow s$, observing that for each $S'\in\anc_s(S)$ we have $H_w\le 4H_s$ and $\floor{qs}-\floor{qw}< 3H_w/8$, so that
\begin{align*}b_{s}((-\infty,S'])&<H_w/8\le H_s-\floor{qw}+\floor{qs},\\
b_{s}([S',\infty))&<H_w/8\le 2H_s-\floor{qw}+\floor{qs}. \qedhere
\end{align*}
\end{proof}

\subsection{Sparsity of the strategy}\label{subsection: sparsity}

In this section we give a bound on the number of occupied vertices used by Spreader's strategy at every time-step, enabling a sub-linear $g(t)$ as per Theorem~\ref{thm: sub-linear win}. To do this, we make the following final adjustment to the strategy.

\textbf{Avoiding dead-ends.} Denote 
$B_t^\ell=\{v\in B_t\ :\ \exists (v,v_1,\dots, v_\ell)\in P_t^\ell(\Z)\}$.
Observe that for any $t$ and $\ell$, each vertex outside of $B_t^\ell$ will eventually have no descendents, regardless of how Spreader or Container play henceforth. Avoiding these ``dead-ends'', namely restricting Spreader's spread at time-step $t$ to the vertices of $B_t^\ell$ cannot alter the result of the game. Namely,
\begin{obs}\label{obs: dead ends Zei}
    For every $\ell, t\in\N$, $B_t^\ell \ne \emptyset \iff B_t \ne \emptyset$.
\end{obs}

Our winning strategy for Theorem~\ref{thm: sub-linear win} is $|B_t^{3H_t}|$ and, indeed, in this section we bound
$|B_t^{3H_t}|$ by $g(t)$. However, in Section~\ref{subsecion: strategy analysis} we validate
Spreader's win under~\eqref{eq: Bt def eighth plane} (which does not obey the constraint of Spreader occupying at most $g(t)$ vertices at every time-step), as winning under both strategies is equivalent. 

Our purpose here is to establish the following. 

\begin{prop}\label{prop: B(t) is virtually-viable}
$\left|B_t^{3H_t}\right| \le O(h_{t}^6) + \frac{t}{h_{t}}$.
\end{prop}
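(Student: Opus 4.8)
The plan is to decompose $B_t^{3H_t}$ according to the type of segment each of its vertices lies in. A vertex $x\in B_t^{3H_t}$ belongs to a segment $S=S_t(x)$ which is either simulative ($\nsim_t(S)=0$) or spreading ($\nsim_t(S)=1$). The spreading case is already handled: by Proposition~\ref{prop: emergency is bounded}, the number of such vertices is $O(h_t^6)$. So it remains to bound the number of $x\in B_t^{3H_t}$ lying in simulative segments.

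For the simulative case I would use the defining property of simulative segments in \eqref{eq: Bt def}: each simulative segment $S$ contains exactly one occupied vertex, its pivot $p_t(S)$. Hence the number of vertices of $\BB_t$ (and a fortiori of $B_t^{3H_t}\subset\BB_t$) lying in simulative segments is at most the total number of segments in $\seg_t$ that intersect the ``active'' part of the front — i.e. the segments that actually contain an occupied vertex. Since $\BB_t\subset\Z\times\{t\}$ and, by the structure of the strategy, the occupied set is contained in an interval of $\Z$ whose length I expect to be $O(t)$ (the front has advanced $t$ steps from a finite initial set $\BB_0$, and at each step the occupied columns can spread outward by a bounded amount), the number of segments of $\seg_t$ meeting this interval is at most its length divided by $h_t$, which is $\frac{t}{h_t}$ up to the additive contribution of the finite initial configuration and boundary rounding. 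Combining the two cases gives $|B_t^{3H_t}|\le O(h_t^6) + \frac{t}{h_t}$ as claimed. (One should be slightly careful that the $\frac{t}{h_t}$ term, as written in the statement, already absorbs the $O(1)$ slack coming from $|\BB_0|$ and from the rounding of the interval length to a number of segments; alternatively this can be folded into the $O(h_t^6)$ error term.)

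The step I expect to be the main obstacle — or at least the one requiring the most care — is pinning down the linear bound on the horizontal extent of $\BB_t$. For the linear analogue of Remark~\ref{rmk: linear analogues} this is immediate, since a single occupied front at time $t$ can occupy at most $O(t)$ columns starting from a finite set. But with doubling segments and the alerting/spreading mechanism described in the proof highlights, one must check that spreading segments near the ends of the front do not cause the occupied interval to grow super-linearly; this should follow from the fact that Container creates only a bounded number of disruptions per turn and alerted regions have bounded size $O(H_t)=O(h_t^2)$, but it is the place where the details of the (as-yet-undefined) strategy enter. Given that this section defers the strategy's precise definition, I would phrase this bound as a consequence of a structural property of the strategy — namely $\BB_t\subset\Z\times\{t\}$ together with a linear bound on $\max\BB_t-\min\BB_t$ — to be verified once the strategy is fully specified in Section~\ref{subsection: strategy}, and otherwise let the two displayed propositions do the work.
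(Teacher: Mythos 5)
Your decomposition is exactly the paper's proof: split $B_t^{3H_t}$ by $\nsim_t$, bound the spreading part by $O(h_t^6)$ via Proposition~\ref{prop: emergency is bounded}, and bound the simulative part by one pivot per segment. The only step you defer as a potential obstacle — the linear horizontal extent of $\BB_t$ — is in fact immediate and needs no strategy details: on $\Zei$ every vertex in row $t$ has column in $[0,t]$ (and in any case occupation spreads only to adjacent vertices, so the extent grows by at most one column per turn), so $B_t\subset[0,t]\times\{t\}$ and the simulative count is at most $\frac{t+1}{h_t}$.
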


To this end, we require the following counting claim.
\begin{clm}\label{clm: cover}
Given $\ell>0$, $A\subset \Z$, and a collection $\{A_i\}_{i=1,\dots,k}$ of disjoint subsets of $A$, such that 
$B_t^\ell(A_i)>0$ for all $1 \le i \le k$, and $(A_i-A_j)\cap \{0,\dots, \ell\}=\emptyset$ for all $i\neq j$. Then, $\dispath{t}{\ell}(A)\ge k$.
\end{clm}
\begin{proof}
This is a nearly straightforward consequence of Observation~\ref{obs: far paths no intersection}, recalling that $\dispath{t}{\ell}(A)$ counts disjoint $t$-paths, while $B_t^\ell(A_i)$ counts $t$-paths in general.
\end{proof}

We wish to establish Proposition~\ref{prop: B(t) is virtually-viable} both for $\Zei$ and $\Zhp$. Unfortunately, this cannot be done relying only upon the common strategy \eqref{eq: Bt def contains}, and we must prove it under the explicit definitions of \eqref{eq: Bt def eighth plane} and \eqref{eq: Bt def directed half plane}. 

\begin{proof}[Proof of Proposition~\ref{prop: B(t) is virtually-viable}]
Let $t\in\N$ and partition the vertices of $B_t^{3H_t}$ according to $\spr_t$.
\[\left|B_t^{3H_t}\right| = \left|\left\{y\in B_t^{3H_t} : \spr_t(S_t(y))=0 \right\}\right| + \left|\left\{y\in B_t^{3H_t} : \spr_t(S_t(y))=1 \right\}\right|.\]
Since a simulative segment has at most one occupied vertex, and $B_t \subset [0,t]$, the first term is bounded by $\frac{t+1}{h_{t}}$.

We are thus left with showing
\begin{equation}\label{eq: sparsity}
\Sigma := \left|\left\{y\in B_t^{3H_t} : \spr_t(S_t(y))=1 \right\}\right| = O(h_{t}^6)\end{equation}

First, observe that by~\eqref{eq: nsim-def} for every $y\in \Sigma$ we have $\ct_t(S_t(y)) > 0$. 
In view of \eqref{eq: countdown-def}, let us partition $\Sigma$ into vertices which were recently affected by a disruption and those which were recently at the edge of the occupied set, denoting
\[\Sigma_1 := \bigcup_{s = t-H_t+1}^{t} \bigcup_{S\in\dis_s}
    \{y\in B_t^{3H_t} : S_{s}(y) \subset I_{s}(S)\} \quad\text{and}\quad  \Sigma_2 := \bigcup_{s = t-h_t+1}^{t} \{y\in B_t^{3H_t} : \left|I_s(S_s(y))\right| = \infty\},\]
so that $\Sigma\subset \Sigma_1\cup \Sigma_2$.

To analyse $|\Sigma_1|$ and $|\Sigma_2|$, we introduce a definition pertaining to the evolution of the process which lead to the occupation of a given vertex.
Given $y\in B_{t}$,
denote $\origin_s(y)\in B_s$ for the leftmost starting vertex of a $s$-path starting in $B_s$ and ending in $y$. Observe that the graph distance $d_G$ satisfies
\begin{equation}\label{eq:ancest}
d_G(\origin_s(y),y)=t-s.
\end{equation}

Note that by~\eqref{eq: Bt def eighth plane}, the evolution equation of the $\Zei$ strategy, that $\origin_s(y)$ indeed always exists for every $y\in B_t$ and $s\in \{t-H_t+1,\dots,t\}$.
In the case of the $\Zhp$ strategy defined via~\eqref{eq: Bt def directed half plane}, however, this may be violated, but only if there exists an $s$-path from $\min B_s + (-1,1)$ to $y$ for some $s\in \{t-H_t+1,\dots,t\}$.
The number of endpoints of $\min B_s + (-1,1)$ at time-step $t$ is bounded by $2(t-s)+1 \le 2H_t$. Thus there can be a total of at most $O(H_t^2) = O(h_t^4)$ such violating vertices in $B_t$, which does not change the asymptotics of the upper bound of $O(h_t^6) + \frac{t}{h_t}$.

We begin by obtaining an upper bound for $\left|\Sigma_1\right|$.
Fix $s\in \{t-H_t+1,\dots,t\}$ and $S\in\dis_s$, and let $y\in B_t^{3H_t}$ such that $S_s(y) \subset I_s(S)$. Consider $\origin_s(y)$ and observe that, by~\eqref{eq:ancest}, $y\in \beta_s(y)+(\{0,\dots,H_t\}\times \{t-s\})$.  
Moreover, because $y\in B_t^{3H_t}$ and there is a $s$-path between $\origin_s(y)$ and $y$, we have $\origin_s(y) \in B_s^{3H_t}$, so that $y\in B_s^{3H_t}+(\{0,\dots,H_t\}\times \{t-s\})$. 
All in all, writing $B^+=\left(B_{s}^{3H_t} + \{0,\dots,H_t\}\times\{t-s\}\right)$
and $I^+=(I_s(S) +\{-H_t,\dots,0\}) \times \{t\}$, we have
\[\left| \{y\in B_t^{3H_t} : S_{s}(y) \subset I_{s}(S)\}\right| \le \left|B^+ \cap I^+ \right|.\]

To bound the right hand side, we cover $B^+ \cap I^+$ by disjoint contiguous intervals of size $4\H_t$.
Denote \[\mathcal{R}=\{m\in4 H_t \Z\ :\ [m,m+4H_t-1]\times \{t\}\cap (B^+ \cap I^+)\neq \emptyset\},\] 
so that $|B^+ \cap I^+| \le 4H_t|\RR|$.
Observe that for all $m\in \RR$ 
the interval
$I_m:=\{m-H_t\dots,m+3H_t\}$ satisfies $I_m\times \{s\}\cap B_s^{H_t}\neq \emptyset$. Moreover, for $m\in \RR \cap 4H_t\cdot 2\Z$ (similarly  $4H_t\cdot (2\Z+1)$),  these intervals are at least $3H_t$ apart. Hence, applying Claim~\ref{clm: cover} with $\ell=3H_t$ to these intervals, we obtain
\[   \frac{1}{2}|\mathcal{R}|\le \max(|\mathcal{R}\cap 4H_t\cdot 2\Z|,|\mathcal{R}\cap 4H_t(2\Z+1)|)\le \dispath{s}{3H_t}\big(I_t(S)+\{-H_t,\dots,0\}\big),\]
so that $|\mathcal{R}|$ must contain at most $8\H_t$ intervals.


Recall that, by the definition of $I_t(S)$ (see~\eqref{eq: alert-interval-def}), we have $\dispath{s}{3H_t}(I_t(S)+\{-H_t,\dots,0\}) \le 5\H_s +H_t \le 6\H_t$.  Therefore, 
\[|B^+ \cap I^+| \le 4H_t \cdot |\RR|  \le 8H_t\cdot\dispath{s}{3H_t}(I_t(S)+\{-H_t,\dots,0\}) \le 8H_t\cdot 6H_t = O(H_t^2).\]
Observing that at most $\floor{\str}+1$ disruptions occur at every time-step, we conclude that
\[|\Sigma_1| \le H_t \cdot (\floor{\str}+1) \cdot O(H_t^2) = O(H_t^3) = O(h_t^6).\]

Next, we find an upper bound for $|\Sigma_2|$. Fix $s\in [t-h_t+1,t]$ and write, in light of~\eqref{eq: alert-interval-def},
\begin{align}
B^+_2 &:= B_s^{3H_t}+\{0,\dots,h_t\}\times\{t-s\} \notag\\
\quad I^+_2 &:= \left\{(x,t) :\dispath{s}{\H_s}((-\infty,x]) \le \H_t \text{ or } \dispath{s}{2\H_s}([x,\infty)) \le 2\H_t+h_t\right\}=(-\infty, x_0]\times \{t\}\cup [x_1, \infty)\times \{t\},\label{eq: useful I2}
\end{align}
where $x_0=\max\{x\ :\dispath{s}{\H_s}((-\infty,x]) \le \H_t\}$, $x_1=\min\{x\ :\dispath{s}{\H_s}([x,\infty)) \le 2\H_t\}$.
As in the case of $\Sigma_1$, consider $\origin_s(y)$ and
follow similar reasoning to obtain, 
\[\left|\{y\in B_t^{3H_t} : I_s(S_s(y)) = \infty\}\right| \le \left|B^+_2 \cap I^+_2\right|.\]

By the definition of $B_2^+$, we have $|B^+_2 \cap I^+_2| \le h_t \cdot |B_s^{3H_t} \times \{t\} \cap I^+_2|$. Noting that each $s$-path of length $3H_t$ emanating from a column of a vertex in $I^+_2$ intersects at most $3H_t-1$ other such paths, we apply Observation~\ref{obs: far paths no intersection} to obtain
\[|B_s^{3H_t} \times \{t\} \cap I^+_2| \le 3H_t \cdot \dispath{s}{3H_t}(I^+_2) = O(H_t^2) = O(h_t^4),\]
where the first equality uses \eqref{eq: useful I2}.
Therefore, $|B^+_2 \cap I^+_2| \le h_t\cdot |B_s^{3H_t} \times \{t\} \cap I^+_2| = O(h_t^5)$. Applying a union bound on $s\in \{t-h_t+1,\dots,t\}$, we conclude that
\[|\Sigma_2| \le h_t \cdot O(h_t^5) = O(h_t^6).\]

This concludes the proof for the $\Zei$ strategy defined in~\eqref{eq: Bt def eighth plane}. For the $\Zhp$ strategy defined in~\eqref{eq: Bt def directed half plane}, note that a vertex $y\in B_t^{3H_t}$ does not necessarily possess $\origin_s(y)$ for any $s\in \{t-H_t+1,\dots,t\}$. However, this can only happen if there exists some such $s$ such that $\min B_s$ is an ancestor of $y$. The number of offspring of $\min B_s$ at time-step $t$ is bounded by $2(t-s)+1 \le 2H_t$. Thus there can be a total of at most $O(H_t^2) = O(h_t^4)$ such offspring, which does not change the upper bound $O(h_t^6) + \frac{t}{h_t}$. 
\end{proof}




\subsection{Strategy analysis and proof of Theorem~\ref{thm: sub-linear win}}\label{subsecion: strategy analysis}

Following the description of Spreader's strategy in $\Zei$, given in Sections~\ref{subsection: Zei strategy}-\ref{subsection: chi}, and the proof of its sparsity, given in Section~\ref{subsection: sparsity}, we dedicate the remainder of Section~\ref{section: eighth plane} to show that this strategy is indeed winning.

To this end we now introduce several new notions, characterising them by their key properties and postponing their formal definition to later sections. 

Given a segment $S\in\seg_t$, 
we define an increasing  sequence of sets $F_t(S)\subset \FF_t$, which describe the deleted vertices associated with $S$ up to time-step $t$. For a formal definition see~\eqref{eq: firefighters def} below. For handling the half-plane and the entire plane graphs, it will be useful to define the game with an arbitrary initial set of deleted vertices, denoted by $\FF_0$.

The fact that we ever count deleted vertices towards more than one segment $S\in \seg_t$ is summarised in the following claim (established in Section~\ref{subsection: analysis} after \eqref{eq: firefighters def}).
\begin{clm}\label{clm: f < rho t}
$|F_t(\seg_t)|-|\FF_0| \le \str t$ for every $t\in\N$.
\end{clm}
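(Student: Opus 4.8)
The plan is to show that the sets $F_t(S)$, over all $S \in \seg_t$, are pairwise disjoint subsets of $\FF_t$ whose union contains only the ``new'' deleted vertices, i.e.\ $\FF_t \setminus \FF_0$, and then to invoke the accumulating budget constraint on Container. First I would recall from the (forthcoming) definition in Section~\ref{subsection: analysis} the two structural properties that $F_t$ is built to satisfy: monotonicity in $t$ (each $F_t(S)$ only grows, compatibly with the ancestor convention $F_{s}(S) = \bigcup_{S' \in \anc_s(S)} F_t(S')$ for $s < t$), and the crucial bookkeeping property that every deleted vertex is associated with at most one segment of $\seg_t$ at each time. Granting these, the argument is a one-line counting estimate.

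The key steps, in order: (1) Using disjointness of the $F_t(S)$ across $S \in \seg_t$ together with $F_t(S) \subseteq \FF_t$, write $f_t(\seg_t) = \sum_{S \in \seg_t} f_t(S) = \bigl|\bigcup_{S \in \seg_t} F_t(S)\bigr| \le |\FF_t|$. (2) Observe that the vertices of $\FF_0$ are present at time $0$ before any segment has been ``charged'', so — by the convention that we never count a vertex of $\FF_0$ toward any $F_t(S)$ — in fact $\bigcup_{S} F_t(S) \subseteq \FF_t \setminus \FF_0$, giving $f_t(\seg_t) \le |\FF_t| - |\FF_0|$. (3) Bound $|\FF_t| - |\FF_0|$, the number of vertices Container has deleted during turns $1,\dots,t$, by the telescoping sum of per-turn allowances $\sum_{s=1}^{t} \bigl(\floor{s\str} - \floor{(s-1)\str}\bigr) = \floor{t\str} - \floor{0} = \floor{t\str} \le \str t$. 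Combining (2) and (3) yields $f_t(\seg_t) - |\FF_0| \le \str t$, as claimed.

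The main obstacle is not the arithmetic but making precise the assertion that ``we never count deleted vertices towards more than one segment'', which is the content of the definition of $F_t$ that the excerpt defers. Concretely, one must check that when segments double (at times in $\tms_2$) the ancestor-sum convention $f_s(S) = \sum_{S' \in \anc_s(S)} f_t(S')$ does not double-count: a vertex charged to $S'$ at an early time and still charged to the descendant $S \supset S'$ later should be counted once, and the nesting $\anc_s(S) \subseteq \seg_s$ forms a partition refining $\seg_t$, so summing over $\seg_t$ and then expanding ancestors recovers a sum over disjoint sets. I would phrase the claim's proof so that it cites exactly the disjointness statement from Section~\ref{subsection: analysis} (the sentence ``we shall never count deleted vertices towards more than one segment $S \in \seg_t$'') as a black box, since the excerpt explicitly says the claim is ``established in Section~\ref{subsection: analysis} after \eqref{eq: firefighters def}''; thus in the present write-up the proof is the short counting chain (1)--(3) above, with the disjointness invoked as a hypothesis supplied by the later construction.
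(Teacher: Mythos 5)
Your overall route is the same as the paper's: the $F_t(S)$, $S\in\seg_t$, are pairwise disjoint (because $F_t(S)\subset S\times\Z$ and the segments of $\seg_t$ partition $\Z$) subsets of $\FF_t$, and Container's accumulated budget bounds $|\FF_t|-|\FF_0|$ by $\floor{t\str}\le \str t$. Your worry about double-counting across doubling times is also handled correctly: at time $t$ one sums directly over the disjoint segments of $\seg_t$, and the ancestor convention only aggregates the (disjoint) sets of descendant segments, so nothing is counted twice.

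The one genuine flaw is your step (2). There is no convention that vertices of $\FF_0$ are never charged to a segment, and the inclusion $\bigcup_{S}F_t(S)\subseteq \FF_t\setminus\FF_0$ is false in general: by \eqref{eq: firefighters def}, $\Delta F_t(S)$ is $(\FF_t\setminus F_{t-1})$ intersected with the relevant region, so it excludes only \emph{previously counted} vertices, not the initial ones; a vertex of $\FF_0$ sitting at height $y$ is counted into $\Delta F_t(S)$ as soon as it enters a look-ahead region $S\times L_t(S)$ at some $t\ge 1$ (and in Sections~\ref{section: directed half plane} and~\ref{section: plane} the games are even initialised with $F_0\subseteq\FF_0$ possibly nonempty). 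So your intermediate bound $f_t(\seg_t)\le|\FF_t|-|\FF_0|$ does not follow and can fail. Fortunately this step is unnecessary: the $-|\FF_0|$ slack on the left-hand side of the claim is there precisely to absorb initially deleted vertices, and the correct chain is simply $f_t(\seg_t)\le|\FF_t|\le|\FF_0|+\floor{t\str}\le|\FF_0|+\str t$, which, combined with your steps (1) and (3), is exactly the paper's one-line argument.
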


We also introduce the function 
$\phi_t(S)$, initialised as $\phi_0(S) = |B_0(S)|$, 
which serves to represent the size of the simulated fire in $S$ at time-step $t$, together with $d_t(S)$, a non-negative quantity initialised as $d_0(S) = 0$
which tells us by how much $\phi$ deviated from its desired evolution. 

The following proposition, established in Section~\ref{subsection: debt bdd}, guarantees that $\phi_t(\seg_t) + d_t(\seg_t)=0$ is indeed equivalent to the fact that $S$ is unoccupied. 
\begin{prop}\label{prop: no fire -> no ptl no dbt}
$B_t \neq \emptyset \iff \phi_t(\seg_t) + d_t(\seg_t) > 0$, for all $t\in\N$.
\end{prop}

In addition, $\phi$ and $d$ are designed to guarantees that the total sum $\phi_t+d_t$ satisfies the following evolution inequality, established in Section~\ref{subsection: growth, eighth plane}

\begin{prop}\label{prop: Phi grows 1}
Let $s \le t$. If $B_{s-1} \ne \emptyset$ then $\Delta\phi_s(\seg_t) + |\Delta F_s(\seg_t)| + \Delta d_s(\seg_t) \ge 1$.
\end{prop}

\begin{proof}[Proof of Theorem~\ref{thm: sub-linear win} for $\Zei$]
Fogarty~\cite{fogarty2003catching} established the fact that $\str_{\Zei}=(1,\infty)$. Thus, to prove the theorem, it remains to show that $1\notin \str(\Zei, Ct^{6/7})$ for some sufficiently large $C$, and some initial fire $B_0$. Following our discussion in Section~\ref{subsection: sparsity}, this reduces to the existence of a winning Spreader strategy against $\str =1$ with $|B_t^{3H_t}| \le Ct^{6/7}$. The fact that our strategy satisfies the latter follows directly from Proposition~\ref{prop: B(t) is virtually-viable}, applied with $h_t := 2^{\lfloor \log_2((t+2^{14})^{1/7})\rfloor}$. To see that \eqref{eq: doubling condition} is satisfied, denote $t_k$ for the $k$-th doubling time-step and observe that for all $k\ge 1$  we have\[t_{k}-t_{k-1}=2^{7(k+2)}-2^{7(k+1)}\ge 100\cdot 2^{2k}\ge 10q^2 2^{2k}=2H_{t_k}.\]

Next, let us show that the strategy $B = (B_t)_{t\in\N}$ is winning against $\str=1$ for $B_0=\{(0,0)\}$. To so do we prove by induction that $B_t\ne\emptyset$ for every $t\in\N$ against any Container strategy $\{\FF_t\}_{t\in\N}$.
For $t = 0$ this is clear. Next, assume that this holds up to time-step $t-1$.
By Proposition~\ref{prop: Phi grows 1} we have $\Delta\phi_s(\seg_t) + |\Delta F_s(\seg_t)| + \Delta d_s(\seg_t) \ge 1$ for every $1\le s\le t$. Summing this up over $s\le t$ we obtain 
\[\phi_t(\seg_t) + |F_t(\seg_t)| +  d_t(\seg_t) \ge t+\phi_0(\seg_t) + |F_0(\seg_t)| + d_0(\seg_t)  = t+1.\]
Since $\str = 1$ we have $|F_t(\seg_t)| \le t$, by Claim~\ref{clm: f < rho t} and the fact that $\FF_0 = \emptyset$. Hence $\phi_t(\seg_t) + d_t(\seg_t) \ge 1$. By Proposition~\ref{prop: no fire -> no ptl no dbt} this implies that $B_t\ne\emptyset$, as required.
\end{proof}

\subsection{Counting blocked vertices and simulated fire: evolution of \texorpdfstring{$F(S)$}{F(S)} and \texorpdfstring{$\phi(S)$}{phi(S)}}\label{subsection: analysis}

In this section we formally define the functions $F_t(S)$ and $\phi_t(S)$, mentioned in the previous section. %

\textbf{Look-ahead.} We begin by defining the \emph{look-ahead} of a segment, denoted by $\ell_t(S)$. This represents the vertical distance between the front and the simulated fire in the segment $S$ at time-step $t$. %
The box $S\times [t,t+\ell_t(S)]$ serves as the region where deleted vertices are added to $F_t(S)$.

 The simulated fire row of a segment $S$ never moves backwards. The look-ahead $\ell_t(S)$ of a segment $S\in \seg_t$ is defined to be $h_{t}$ for simulative segments, and once a segment transitions from simulative to spreading states, the simulated fire ceases to advance until the front  catches up with it. As a result, the look-ahead distance is reduced gradually, one unit at a time. Formally, for $t\in\N$ and $S\in\seg_{t}$ we define,
\begin{align}
    \ell_{t-1}(S) &:= \max\{\ell_{t-1}(S')\ :\ S'\in \anc_{t-1}(S)\},\notag\\
    \ell_t(S) &:= \begin{cases}
    h_{t} & \text{if $\spr_t(S) = 0$,}\\
    (\ell_{t-1}(S)-1)_+ & \text{if $\spr_t(S) = 1$},
\end{cases}\label{eq: look-ahead-def}\\
    L_t(S) &:= S\times [t,t+\ell_t(S)].\notag
\end{align}
The look-ahead is initialised as $\ell_0 \equiv 0$ for all segments. We refer to $L_t(S)$ as the \emph{look-ahead region} of $S$, saying that $S$ is  \emph{simple} whenever $\ell_t(S) = 0$.
We generalise the definitions to $S\in \seg_t$ and $s<t$ by setting
\begin{equation}\label{eq: look-ahead past generalisation}
  \ell_{s}(S) := \max_{S'\in \anc_{s}(S')}\ell_{s}(S).  
\end{equation}

Using \eqref{eq: doubling condition} this guarantees the following.
\begin{obs}\label{obs: lookahead dies out}
    Let $s,t\in\N$ and $S\in\seg_t$ such $s\le t-2h_s$ and $\spr_{t'}(S)=1$ for all $t'\in [s,t]$, then $\ell_{t}(S)=0$.
\end{obs}

 On such segments the simulated fire consists simply of the occupied vertices. See Figure~\ref{fig:segment step} for a depiction of one Spreader step in a single segment under three different conditions -- $\ell_t(S) = 0$, $0<\ell_t(S)<h_t$, and $\ell_t(S) = h_t$.

\begin{figure}[ht]
    \captionsetup{margin=1cm}
    \subfloat[\label{fig:segment step, simulated}]{
    \begin{tikzpicture}[scale=\sca]
        \draw[step=1cm,darkgray,thin] (0,0) grid (7,8);
        \draw [black, very thick, dashed,pattern=my north east lines,pattern color=lightgrayX] (0,0) rectangle (7,7);
        \draw[black, very thick,fill=none] (0,0) rectangle (7,1); 
        
        \node at (2.5,0.5) {\Fire}; 
        \node[fill opacity=0.5] at (3.5,1.5) {\Fire};
        
        \node at (0.5,5.5) {\faShield};
        \node at (2.5,2.5) {\faShield};
        \node at (3.5,2.5) {\faShield};
        \node at (5.5,7.5) {\faShield};
        \node at (0.5,7.5) {\faShield};
 
        \draw [|-|, line width=0.3mm] (-0.5,0.5) -- (-0.5,6.5);
        \draw [|-|, line width=0.3mm] (-1,1.5) -- (-1,7.5);

    \end{tikzpicture}}
    \hfill
    \subfloat[\label{fig:segment step, transitional}]{
    \begin{tikzpicture}[scale=\sca]
        \draw[step=1cm,darkgray,thin] (0,0) grid (7,4);
        \draw [black, very thick, dashed,pattern=my north east lines,pattern color=lightgrayX] (0,0) rectangle (7,4);
        \draw[black, very thick,fill=none] (0,0) rectangle (7,1); 
        
        \node at (0.5,0.5) {\Fire};
        \node at (2.5,0.5) {\Fire};
        \node at (3.5,0.5) {\Fire};
        \node at (4.5,0.5) {\Fire};
        \node[fill opacity=0.5] at (1.5,1.5) {\Fire};
        \node[fill opacity=0.5] at (2.5,1.5) {\Fire};
        \node[fill opacity=0.5] at (5.5,1.5) {\Fire};
        
        \node at (0.5,1.5) {\faShield};
        \node at (3.5,1.5) {\faShield};
        \node at (4.5,1.5) {\faShield};
        \node at (2.5,2.5) {\faShield};
        \node at (3.5,2.5) {\faShield};
        \node at (2.5,3.5) {\faShield};
 
        \draw [|-|, line width=0.3mm] (-0.5,0.5) -- (-0.5,3.5);
        \draw [|-|, line width=0.3mm] (-1,1.5) -- (-1,3.5);
    \end{tikzpicture}}
    \hfill
    \subfloat[\label{fig:segment step, simple}]{
    \begin{tikzpicture}[scale=\sca]
        \draw[step=1cm,darkgray,thin] (0,0) grid (7,2);
        \draw [black, very thick, dashed,pattern=my north east lines,pattern color=lightgrayX] (0,0) rectangle (7,1);
        \draw[black, very thick,fill=none] (0,0) rectangle (7,1); 
        
        \node at (0.5,0.5) {\Fire};
        \node at (1.5,0.5) {\Fire};
        \node at (5.5,0.5) {\Fire};
        \node[fill opacity=0.5] at (1.5,1.5) {\Fire};
        \node[fill opacity=0.5] at (2.5,1.5) {\Fire};
        \node[fill opacity=0.5] at (5.5,1.5) {\Fire};
        \node[fill opacity=0.5] at (6.5,1.5) {\Fire};
        
        \node at (0.5,1.5) {\faShield};
 
        \draw [|-|, line width=0.3mm] (-0.5,0.3) -- (-0.5,0.7);
        \draw [|-|, line width=0.3mm] (-1,1.3) -- (-1,1.7);
    \end{tikzpicture}}
\caption{Depicted in each sub-figure, are a segment and the union of its look-ahead regions in in two consecutive time-steps $t-1$ and $t$. Look-ahead distances are marked by vertical lines and $S\times L_{t-1}(S)$ by a filling pattern.
In~\eqref{fig:segment step, simulated} the segment is simulative, in~\eqref{fig:segment step, transitional} it is spreading with positive look-ahead, while in~\eqref{fig:segment step, simple} it is simple. }
    \label{fig:segment step}
\end{figure} 

\textbf{Deleted vertices.}
For each segment $S\in\seg_{0}$ set $F_{0}(S)=\emptyset$. We inductively define $F_t(S)$ for times $t\in\N$ and segments $S\in\seg_{t-1} \cup \seg_{t}$ as follows.
In a non-simple segment we add to $F(S)$ all deleted vertices inside its look-ahead region, and in a simple segment we add only the deleted vertices of the front at time-step $t$ neighbouring occupied vertices (this will be important only for the analysis of the strategy in $\ZZ$). Namely,
\begin{align}
F_{t-1}(S)&:= \bigcup_{S'\in \anc_{t-1}(S)}F_{t-1}(S'),\notag\\
\Delta F_t(S) &:= (\FF_{t}\setminus F_{t-1}) \cap \begin{cases}
    L_{t}(S) & \text{if $\ell_t(S) > 0$}\\
    L_{t}(S) \cap \big(B_{t-1}+\{(-1,1),(0,1), (1,1)\}\big)  & \text{if $\ell_t(S) = 0$}.
\end{cases}\label{eq: firefighters def}
\end{align}
Observe that Claim~\ref{clm: f < rho t} follows directly from the facts that $F_t(S) \subset \FF_t$ for any $S\in\seg_t$, and that $\{F_t(S)\}_{S\in\seg_t}$ are pair-wise disjoint, since $F_t(S) \subset S\times\Z$.

Non-disruptive segments have the following property.
\begin{obs}\label{obs: exhaustive count in non-disruptive segs}
If $S\in \seg_t\setminus\dis_{t}$ then 
$L_{t-1}(S)\cap \Delta F_t(S)=\emptyset$.
\end{obs}
\begin{proof}
As $S\in \seg_t\setminus\dis_{t}$, and $\ell_{t-1}(S)\le h_t$, we have $L_{t-1}(S) \cap \Delta \FF_t = \emptyset$. In addition, by \eqref{eq: firefighters def}, at every time-step $t'<t-1$, we have
$\FF_{t'}(L_{t'}(S))\setminus {F_{t'}} \subset S\times \{t'\}$, which is disjoint from $L_{t-1}(S)$. Hence, 
if $\ell_{t-1}(S)\neq 0$, we have
$F_t \supset \FF_{t-1}(L_{t-1}(S)),$
from which the observation easily follows. On the other hand, $\ell_{t-1}(S)= 0$, then $L_t(S)\cap L_{t-1}(S)=\emptyset$ and, as $\Delta F_t(S)\subset L_t(S)$ the observation follows.
\end{proof}

From this we deduce the following.
\begin{obs}\label{obs: delta f in slab}
    Let $S\in\seg_t$ such that $\spr_t(S)=0$. Then $\Delta F_t(S) = \FF_t(S\times[t+\ell_{t-1}(S),t+\ell_t(S)]$.
\end{obs}
\begin{proof}
    $\spr_t(S)=0$ implies $S\notin\dis_t$. The observation follows by~\eqref{eq: firefighters def} together with Observation~\ref{obs: exhaustive count in non-disruptive segs}.
\end{proof}

We also me the following observation concerning simple segments.
\begin{obs}\label{obs: new ff after simple 2}
Let $t\in\N,S\in\seg_{t}$. If $\ell_{t-1}(S)=0$ then $L_t(S)\cap F_{t-1}(S) = \emptyset$.
\end{obs}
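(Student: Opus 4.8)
The plan is to reduce the statement to a bound on the vertical (second) coordinate of the vertices counted in $F_{t-1}(S)$. Since $L_t(S)=[t,t+\ell_t(S)]\subseteq[t,\infty)$, every vertex of $S\times L_t(S)$ has second coordinate at least $t$; hence it suffices to show that every vertex of $F_{t-1}(S)$ has second coordinate at most $t-1$. I would in fact prove, by induction on $t$, the stronger invariant that for every $t\in\N$ and every $S\in\seg_t$ one has $\ell_t(S)\le h_t$ and
\[F_t(S)\ \subseteq\ S\times(-\infty,\ t+\ell_t(S)].\]
Granting this, applying it at time $t-1$ and using the hypothesis $\ell_{t-1}(S)=0$ gives $F_{t-1}(S)\subseteq S\times(-\infty,t-1]$, which together with $S\times L_t(S)\subseteq S\times[t,\infty)$ yields the asserted disjointness.

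For the induction the base case $t=0$ is immediate, since $F_0(S)=\emptyset$ and $\ell_0\equiv0$. For the inductive step, first note that $\ell_{t-1}(S)\le h_{t-1}\le h_t$: the bound $\ell_{t-1}(S')\le h_{t-1}$ holds for each $S'\in P_{t-1}(S)$ by the inductive hypothesis, $\ell_{t-1}(S)$ is the maximum of these by~\eqref{eq: look-ahead-def}, and $h$ is monotone non-decreasing; combined with~\eqref{eq: look-ahead-def} this already gives $\ell_t(S)\le h_t$ in both of its cases. Next, by~\eqref{eq: firefighters def} we have $F_t(S)=\big(\bigcup_{S'\in P_{t-1}(S)}F_{t-1}(S')\big)\cup\Delta F_t(S)$; by the inductive hypothesis applied to each $S'\in P_{t-1}(S)$ (using $S'\subseteq S$ and $\ell_{t-1}(S')\le\ell_{t-1}(S)$) the first part lies in $S\times(-\infty,(t-1)+\ell_{t-1}(S)]$. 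It remains to bound both $(t-1)+\ell_{t-1}(S)$ and the second coordinates occurring in $\Delta F_t(S)$ by $t+\ell_t(S)$, which I would do by splitting on~\eqref{eq: look-ahead-def}. If $\nsim_t(S)=0$ then $\ell_t(S)=h_t$, $\Delta F_t(S)\subseteq S\times L_t(S)=S\times[t,t+h_t]$, and $(t-1)+\ell_{t-1}(S)\le(t-1)+h_t<t+h_t$. If $\nsim_t(S)=1$ then $\ell_t(S)=(\ell_{t-1}(S)-1)_+$; in both sub-cases of~\eqref{eq: firefighters def} one has $\Delta F_t(S)\subseteq S\times L_t(S)=S\times[t,t+\ell_t(S)]$, and $(t-1)+\ell_{t-1}(S)=t+(\ell_{t-1}(S)-1)\le t+(\ell_{t-1}(S)-1)_+=t+\ell_t(S)$. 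In every case $F_t(S)\subseteq S\times(-\infty,t+\ell_t(S)]$, closing the induction.

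I expect the only real work to be the routine case-checking in the inductive step, and the single place asking for a bit of care is the interaction with doubling times: there $P_{t-1}(S)$ consists of two ancestor segments of width $h_{t-1}<h_t$. But since the invariant is phrased purely in terms of second coordinates and of $\ell$, and $\ell_{t-1}(S)$ is by definition the maximum of $\ell_{t-1}(S')$ over $S'\in P_{t-1}(S)$, the union bound over ancestors goes through verbatim; for the constant-$h$ reading suggested in Remark~\ref{rmk: linear analogues} one has $P_{t-1}(S)=\{S\}$ and this subtlety disappears entirely.
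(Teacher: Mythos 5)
Your proof is correct and follows essentially the same route as the paper: the paper's argument is just the one-line assertion that $F_{t-1}(S)\subset S\times(-\infty,\,t-1+\ell_{t-1}(S))$ (taken as immediate from \eqref{eq: firefighters def}), combined with $\min L_t(S)=t$, and your induction merely spells out why that containment holds. The extra case-checking (including the doubling-time case) is fine but not a different method.
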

\begin{proof}
Note that $F_{t-1}(S) \subset S\times (-\infty, t-1+\ell_{t-1}(S)) = S\times (-\infty, t-1)$.
As $L_t(S) \subset S\times [t,\infty)$, the observation follows.
\end{proof}

\textbf{Simulated fire.} 
Given a segment $S\in \seg_{t}$, we define 
\begin{equation}\label{eq: r def}\rng_t(S) := \{y\in G_t : \exists (x_0,\dots,y)\in P_t^{\ell_t(S)}(S), x_0\in B_t(S)\},\end{equation}
the number of elements in $S\times\{t+\ell_t(S)\}$ which are the endpoints of a $t$-path emanating from $B_t(S)$. These represent the set of simulated burning vertices in $S$.
In order to handle doubling times, we generalise the definition for time-step $t-1$ by setting $\rng_{t-1}(S) := \sum_{S'\in \anc_{t-1}(S)}\rng_{t-1}(S')$.
For every $t\in\N$ and $S\in\seg_{t}$ we define $\phi_{t}(S)$, which will serve as a useful lower bound on the size of $\rng_t(S)$.
In a simulative segment $S$, it is reduced by its value by $1$ for every newly counted deleted vertex and is otherwise fixed. In spreading segments it measures the size of $\rng_t(S)$. 
Formally, we define $\phi_t(S)$ inductively by
\begin{align}
    \phi_{t-1}(S) &:=\sum_{S'\in \anc_{t-1}(S)}\phi_{t-1}(S'),\notag \\
    \phi_{t}(S) &:=
    \begin{cases}
        (\phi_{t-1}(S) - |\Delta F_{t}(S)|)_+ & \text{if $\spr_t(S) = 0$},\\
        \rng_t(S) & \text{if $\spr_t(S) = 1$}.
    \end{cases}\label{eq: phi def}
\end{align}
Observe that this fully defines $\phi$, as $\spr_t(S)=1$ for all $S\in \seg_0$. 

Note that $\phi_t(S) \le h_t$ for all $t$ and that in case that $\ell_t(S) = 0$ we have
\begin{equation}\label{eq:simple phi}
\phi_{t}(S)=|B_t(S)|.
\end{equation}

The fact that $\phi_t(S)$ bounds $\rng_t(S)$ from below, is expressed in the following lemma, whose somewhat technical proof we postpone to Section~\ref{subsection: phi < r}.

\begin{lem}\label{lem: phi < r}
For all $S\in\seg_{t}$ we have $\phi_t(S) \le \rng_t(S)$.
\end{lem}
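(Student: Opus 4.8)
\textbf{Proof plan for Lemma~\ref{lem: phi < r}.}

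The plan is to prove the inequality $\phi_t(S) \le \rng_t(S)$ by induction on $t$, distinguishing cases according to the value of $\nsim_t(S)$ and, when $\nsim_t(S)=0$, according to whether $\ell_t(S)=h_t$ (freshly simulative) or we are in a step where the segment was already simulative at time $t-1$.

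First, the easy case: if $\nsim_t(S)=1$, then by the second branch of~\eqref{eq: phi def} we have $\phi_t(S)=\rng_t(S)$ directly, so the inequality holds with equality. This also covers the base case $t=0$, since $\nsim_0(S)=1$ for all $S\in\seg_0$.

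For the main case $\nsim_t(S)=0$, I would argue that $\rng_t(S) = b_t(S)$: when $S$ is simulative, by~\eqref{eq: Bt def} the only occupied vertex is the pivot $p_t(S)$ (when it is finite), and since $\ell_t(S)=h_t$ in a simulative segment, a $(t,h_t)$-path emanating from the pivot is precisely the vertical column $C_t^{h_t}(p_t(S))$, which lies in the board by the choice of pivot in~\eqref{eq: pivot def consolidation} resp.~\eqref{eq: pivot def simulative step}; hence $\rng_t(S) = b_t(S) \in \{0,1\}$. So it suffices to show $\phi_t(S)\le b_t(S)$ in the simulative case. If $b_t(S)=1$ there is nothing to prove beyond $\phi_t(S)\le h_t \le \dots$; wait — this needs care, since $\phi_t(S)$ can be as large as $h_t$. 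The key observation is that $\phi_t(S) > 0$ forces $b_t(S) = 1$, i.e. forces the pivot to survive: indeed, if $\nsim_t(S)=0$ and $b_t(S)=0$ then by~\eqref{eq: pivot def simulative step} (or~\eqref{eq: pivot def consolidation}) the relevant minimum is over the empty set, meaning every candidate column had its look-ahead column blocked. I would combine this with the recursive formula $\phi_t(S) = (\phi_{t-1}(S) - \Delta f_t(S))_+$ and track which deleted vertices get counted into $\Delta f_t(S)$ via~\eqref{eq: firefighters def}: whenever the pivot is extinguished, enough deleted vertices inside $S\times L_t(S)$ are added to $F_t(S)$ to bring $\phi$ down to $0$. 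This is essentially where I expect the main obstacle to lie — making rigorous the accounting that "killing the pivot" deletes at least $\phi_{t-1}(S)$ many vertices counted toward $f_t(S)$, so that the $(\cdot)_+$ truncation yields $\phi_t(S)=0=b_t(S)$. One would likely need an auxiliary monotonicity statement: as long as the pivot has an unblocked look-ahead column, $\phi_t(S)\le 1 = b_t(S)$ trivially fails to be the right bound... so instead the cleaner route is: either $b_t(S)=1$, in which case we must separately show $\phi_t(S)\le 1$? No — $\phi$ is allowed to be up to $h_t$. Let me reconsider: the correct reading is that $\rng_t(S)$ in a simulative segment is not $b_t(S)$ but rather counts endpoints of $(t,\ell_t(S))=(t,h_t)$-paths, and when doubling has just halved pivots the surviving pivot still supports a long path; but actually $\phi$ in~\eqref{eq: phi def} is designed precisely so that $\phi_t(S)\le h_t = \ell_t(S)+$ something. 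The honest statement of the obstacle: I need to show the simulated-fire count $\phi$ never exceeds the number of genuine path-endpoints $\rng$, and in the transitional/simple regime ($\nsim_t(S)=1$) this is an equality, while in the simulative regime it reduces to showing $\phi_{t-1}(S) - \Delta f_t(S) \le \rng_t(S)$, using the inductive bound $\phi_{t-1}(S)\le \rng_{t-1}(S)$ together with the fact established in Section~\ref{subsection: phi < r} (the technical lemma referenced as being proved there) that the drop in range from $t-1$ to $t$ is at most $\Delta f_t(S)$ when the pivot must be relocated.

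Concretely, the steps in order: (1) dispatch $\nsim_t(S)=1$ and the base case by~\eqref{eq: phi def}; (2) for $\nsim_t(S)=0$, invoke the inductive hypothesis $\phi_{t-1}(S)\le\rng_{t-1}(S)$ and the recursion $\phi_t(S)=(\phi_{t-1}(S)-\Delta f_t(S))_+$; (3) prove the core inequality $\rng_{t-1}(S) - \rng_t(S) \le \Delta f_t(S)$, i.e. that each unit of range lost between consecutive simulative steps is chargeable to a newly counted deleted vertex in $S\times L_t(S)$ — this uses that a lost $(t-1,h_{t-1})$-path endpoint corresponds to a blocking vertex which, by~\eqref{eq: firefighters def} with $\ell_t(S)=h_t>0$, lands in $\Delta F_t(S)$, together with Observation~\ref{obs: far paths no intersection} to ensure distinct endpoints are charged to distinct deleted vertices; (4) combine: $\phi_t(S)\le(\rng_{t-1}(S)-\Delta f_t(S))_+ \le \rng_t(S)$, using $\rng_t(S)\ge 0$. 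I expect step (3) — the charging argument relating range loss to newly counted firefighters in a simulative segment — to be the main obstacle, particularly across doubling times where $\anc_{t-1}(S)$ has two elements and one pivot gets extinguished; the hypothesis~\eqref{eq: doubling condition} and Claim~\ref{clm: timer large after doubling} should ensure that a doubling segment is genuinely simulative with room to spare, so that the extinguished pivot contributes $0$ to both sides consistently.
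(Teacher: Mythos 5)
Your outer frame (induction on $t$, splitting on $\nsim_t(S)$, and the identity $\phi_t(S)=\rng_t(S)$ when $\nsim_t(S)=1$) matches the paper's, but the core of your argument --- step (3), the claim $\rng_{t-1}(S)-\rng_t(S)\le \Delta f_t(S)$ in a simulative step --- is false, and this is exactly the difficulty the paper's proof is built to get around. In a simulative step $\Delta f_t(S)$ only picks up vertices in the new top row $S\times\{t+h_t\}$ (anything lower would be a disruption or was counted earlier), whereas the range can collapse because of \emph{old} deleted vertices, already counted in $F_{t-1}(S)$, interacting with the leftmost preference in \eqref{eq: pivot def simulative step}. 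Concretely: suppose the pivot has sat at column $x$ while Container long ago deleted, one per turn and always at vertical distance at least $h_t$ from the then-current front, the vertical wall $\{x+1\}\times\{t+1,\dots,t+h_t-1\}$ (so no disruption ever occurred and every wall vertex lies in $F_{t-1}(S)$). At time $t-1$ the range is about $h_t-1$: one path runs up column $x$, and the paths through $(x+1,t)$ are deflected to $x+2$ and fan out to the right of the wall. At time $t$ the new pivot is $x$ (leftmost with an unblocked column), paths from $(x,t)$ are pinned to column $x$ up to row $t+h_t-1$, so $\rng_t(S)=2$, while $\Delta f_t(S)=0$. Thus $(\rng_{t-1}(S)-\Delta f_t(S))_+\not\le\rng_t(S)$ and the chain in your step (4) breaks. (The lemma itself survives here only because the wall was already charged against $\phi$, forcing $\phi_{t-1}(S)\le 1$ --- a fact your argument never uses.) Note also that the ``fact established in Section~\ref{subsection: phi < r}'' which you invoke for exactly this step is the proof of the present lemma, so appealing to it is circular; and your intermediate assertion that $\rng_t(S)=b_t(S)$ for simulative segments is likewise incorrect, since $(t,\ell)$-paths may step rightwards.

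What the paper does instead is strengthen the induction hypothesis so that precisely those old deletions are available to absorb range losses: it proves inductively that $p_t(S)\ne\infty$, $\rng_t(S)>0$, and $\phi_t(S)\le w_t(S)-\f_t(S)$, where $w_t(S)=\big|[p_t(S),\max S]\big|$ and $\f_t(S)$ counts \emph{all} deleted vertices in $[p_t(S),\max S]\times L_t(S)$, whenever they were deleted. A separate geometric charging argument (Claim~\ref{clm: w < r + f}: a family of $w_t(S)$ combed paths through the pivot's witness path, each blocked one charged to a distinct deleted vertex) gives $w_t(S)\le\rng_t(S)+\f_t(S)$, and the two statements combine to yield $\phi_t(S)\le\rng_t(S)$. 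The simulative step is then handled by tracking how $w$ and $\f$ change as the pivot moves (a rightward pivot move is financed by a deleted vertex above the old pivot leaving the window $[p_t(S),\max S]$), and doubling times require a genuinely separate case analysis over the two ancestor segments, using Claim~\ref{clm: phi <= h - ff} for the sibling of the segment containing the surviving pivot --- none of which appears in your outline. So the proposal as written has a real gap at its central charging step.
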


\subsection{Potential}\label{subsection: mu}

Following \cite{feldheim20133}, for any segment $S\in\seg_{t}$ we define its \emph{potential} by $\ppo_t(S) := \phi_t(S) + |F_t(S)|$. This function played a key role in previous works on the Firefighter Problem, and its evolution was the main instrument in establishing victory for the fire. In particular, on each finite segment it was non-decreasing.
In our setting, this is indeed the case for simple segments, as the following variation on \cite{fogarty2003catching}*{Theorem~1} indicates. Note that \ref{prop: mu not ex nihilo simple: itm fogarty2} is only meant to enhance the result so that it will be viable for handling $\Zhp$ in Section~\ref{section: directed half plane}.

\begin{prop}\label{prop: mu not ex nihilo simple}
Let $t\in\N$ and $S\in\seg_{t}$ be such that $\ell_{t-1}(S) = \ell_t(S) = 0$ then 
\begin{enumerate}[label=\textnormal{(\alph*)}]
    \item \label{prop: mu not ex nihilo simple: itm simple}
    $\Delta \ppo_t(S) \ge 0$,
    \item \label{prop: mu not ex nihilo simple: itm fogarty}
    If there exists $x\in B_{t-1}$ such that $x+(1,0) \notin B_{t-1}(S)$, then $\Delta \ppo_t(S) \ge 1$.
    \item \label{prop: mu not ex nihilo simple: itm fogarty2}
    Moreover, if there exists $x\in B_{t-1}$ such that $x+(1,0) \notin B_{t-1}(S)$, then \[\Delta \ppo_t(S) \ge 1+\ind\{\min B_{t-1} + (-1,1)\in F_t(S)\cup B_t(S)\}.\] 
\end{enumerate}
\end{prop}

\begin{proof}
Let $a$ be an indicator to the fact that the condition of Items~\ref{prop: mu not ex nihilo simple: itm fogarty} and \ref{prop: mu not ex nihilo simple: itm fogarty2} holds.
By~\eqref{eq: Bt def contains}, any neighbour in $S\times\{t\}$ of $B_{t-1}$ is either occupied or deleted at time-step $t$. There are always at least $|B_{t-1}(S)|$ such neighbours which are vertically above a vertex in $B_{t-1}$, and in case that $a=1$, at least one additional such neighbour exists in $B_{t-1} + \{(0,1),(1,1)\}$. By Observation~\ref{obs: new ff after simple 2} we know that every such deleted vertex is counted towards $|\Delta F_t(S)|$. We thus have $|B_{t-1}(S)| + a \le |B_t(S)| + |\Delta F_t(S)|$. As $\ell_{t-1}(S)=\ell_{t}(S)=0$, for $t' \in \{t-1,t\}$ we have $\phi_{t'}(S) = |B_{t'}(S)|$ by~\eqref{eq:simple phi}. We conclude that
\[\Delta \ppo_t(S) = \Delta \phi_t(S) + |\Delta F_t(S)| = \Delta |B_t(S)| + |\Delta F_t(S)| \ge a,\]
as required. Observing that $\min B_{t-1} + (-1,1) \notin B_{t-1} + \{(0,1),(1,1)\}$, item~\ref{prop: mu not ex nihilo simple: itm fogarty2} follows.
\end{proof}


The same property holds for all segments as long as no disruption occurs, as stated in the following proposition. 

\begin{prop}\label{prop: mu not ex nihilo}
If $S\in\seg_t\setminus\dis_t$ then $\Delta \ppo_t(S) \ge 0$. 
\end{prop}

\begin{proof}
The fact that $\Delta \ppo_t(S) \ge 0$ holds when $\spr_t(S)=0$ is immediate from~\eqref{eq: phi def}.
When $\spr_t(S) = 1$, we have $\phi_{t}(S) = \rng_{t}(S)$, while, by 
Lemma~\ref{lem: phi < r}, $\phi_{t-1}(S) \le \rng_{t-1}(S)$. 
Observing that 
\[\Delta\ppo_t(S)=\Delta\phi_{t}(S)+|\Delta F_t(S)|\ge \Delta\rng_t(S)+|\Delta F_t(S)|,\]
we are left with showing
\begin{equation}\label{eq: mu not ex nihilo: itm r+f}
 \Delta \rng_t(S) + |\Delta F_t(S)| \ge 0.
 \end{equation}


In order to tackle potential doubling at time-step $t$ we observe that, by our assumption $S \notin \dis_t$,
\begin{align*}
\rng_t(S) &\ge \sum_{S'\in \anc_{t-1}(S)}\rng_t^{\ell_t(S)}(S'),\\
\Delta F_t(S)&=\sum_{S'\in \anc_{t-1}(S)}\big|\FF_t(S' \times [t+\ell_{t-1}(S'),t+\ell_t(S)]) \big|.
\end{align*}
Hence, to obtain \eqref{eq: mu not ex nihilo: itm r+f}, it would suffice to show for all $S'\in \anc_{t-1}(S)$ 
\begin{equation*} \rng_t^{\ell_t(S)}(S')-\rng_{t-1}^{\ell_{t-1}(S')}(S')  + \big|\FF_t(S' \times [t+\ell_{t-1}(S'),t+\ell_t(S)]) \big| \ge 0.
 \end{equation*}
Indeed, by our assumptions $S\notin \dis_t$, $\spr_t(S)=1$ we have $\rng_t^{\ell_{t-1}(S')-1}(S')=\rng_{t-1}^{\ell_{t-1}(S')}(S')$, so that this is a direct consequence of Claim~\ref{clm: faithful upwards reduction} with $[a,b]=S'$,$\ell_1=\ell_{t-1}(S')-1$, 
$\ell_2=\ell_{t}(S)$.
\end{proof}

Disruptions, however, may cause a drastic reduction in $\phi_t(S)$, with only a small increase to $|F_t(S)|$, resulting in a reduction in $\ppo_t(S)$. Subsequent sections will introduce the debt $d_t(S)$ to handle these reductions in an amortised fashion.

\subsection{Debt}\label{subsection: debt}

Here we define $d_t(S)$, the \emph{debt}, 
used to keep track of reductions in $\ppo$ and see that these are compensated for. 
Hence we set $d_0 \equiv 0$, and let $\Delta d_t$ increase at every time-step by $\big(-\Delta \ppo_t(S)\big)_+$. To reduce the debt, we identify $\lambda_t(S)$, the nearest segment to the left of $S$ with positive $\Delta \ppo_t(S)$. The increase in $\ppo_t$ of $\lambda_t(S)$ will compensate for its decrease in $S$. For use in Section~\ref{section: directed half plane}, it will be convenient not to allow this compensation to come from the leftmost occupied segment. 
Formally, 
\begin{equation}\label{eq: lambda def} 
\lambda_t(S) := \max\{Q\in\seg_{t} : Q \le S, \Delta \ppo_t(Q) > 0, Q > S_t(\min B_t)\}.
\end{equation}
Thus $\lambda_t(S)=-\infty$ whenever the maximum is taken over an empty set.

We inductively define $d_t$  via
\begin{equation}\label{eq: debt def}
\begin{aligned}
    d_{t-1}(S) &:= \sum_{S'\in \anc_{t-1}(S)}d_{t-1}(S'),\\
  \tempd_t(S) &:= d_{t-1}(S) + \big(-\Delta \ppo_t(S)\big)_+ ,\\
  d_t(S) &:= \begin{cases}
      \left(\tilde d_t(S) - \Delta\ppo_t(\lambda_{t}(S))\right)_+ & \text{if } \lambda_t(S)> -\infty, \tilde d_t(Q) = 0 \text{ for every } \lambda_t(S) \le Q < S,\text{ and } \tilde d_t(S)>0,\\
      \tilde d_t(S) & \text{otherwise}.
  \end{cases}
\end{aligned}
\end{equation}

Observe that $d_t(S)\ge0$ for all $t$. Let us make formal the fact that the increase of the debt is governed by $(-\Delta \ppo_t(S))_+$, by making the following straightforward observation.
\begin{obs}\label{obs: d > 0 implies del d < - del mu}
For all $t\in\N$ and $S\in\seg_t$ we have $\Delta d_t(S) \le (-\Delta \ppo_t(S))_+$.
\end{obs}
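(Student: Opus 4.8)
The plan is to unwind the recursive definition of $d_t(S)$ given in~\eqref{eq: debt def} and check the claimed inequality separately in each of its two branches, using only the sign information encoded in the definition~\eqref{eq: lambda def} of $\lambda_t(S)$. The key preliminary remark is that $\tempd_t(S) = d_{t-1}(S) + \big(-\Delta\ppo_t(S)\big)_+ \ge 0$, since $d_{t-1}(S)\ge 0$ (by the induction behind the observation ``$d_t(S)\ge 0$ for all $t$''); in particular $\tempd_t(S) - d_{t-1}(S) = \big(-\Delta\ppo_t(S)\big)_+$, where $d_{t-1}(S)$ here denotes exactly the quantity $\sum_{S'\in P_{t-1}(S)}d_{t-1}(S')$ that appears in the recursion and that is subtracted in the backward difference $\Delta d_t(S)$.

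In the ``otherwise'' branch one has $d_t(S) = \tempd_t(S)$, hence $\Delta d_t(S) = \tempd_t(S) - d_{t-1}(S) = \big(-\Delta\ppo_t(S)\big)_+$ and the inequality holds with equality. In the first branch one has $d_t(S) = \big(\tempd_t(S) - \Delta\ppo_t(\lambda_t(S))\big)_+$, and this branch is reached only when $\lambda_t(S) > -\infty$; then~\eqref{eq: lambda def} forces $\Delta\ppo_t(\lambda_t(S)) > 0$, so $\tempd_t(S) - \Delta\ppo_t(\lambda_t(S)) < \tempd_t(S)$, and since $\tempd_t(S)\ge 0$ this gives $d_t(S)\le \tempd_t(S)$. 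Subtracting $d_{t-1}(S)$ yields $\Delta d_t(S) \le \tempd_t(S) - d_{t-1}(S) = \big(-\Delta\ppo_t(S)\big)_+$, completing the proof.

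There is no genuine obstacle here; the only points requiring care are (i) to invoke the strict positivity $\Delta\ppo_t(\lambda_t(S)) > 0$ coming from~\eqref{eq: lambda def} (rather than any property of the debt itself) in order to discard the positive part, and (ii) to match the $d_{t-1}(S)$ appearing in $\Delta d_t(S)$ with the one inside $\tempd_t(S)$, which is immediate from the definitions.
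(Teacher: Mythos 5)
Your proof is correct, and it is exactly the argument the paper has in mind: the paper records this as a "straightforward observation" from the definition \eqref{eq: debt def}, and your case analysis (equality in the "otherwise" branch; in the first branch $d_t(S)\le \tilde d_t(S)$ because $\Delta\ppo_t(\lambda_t(S))>0$ by \eqref{eq: lambda def} and $\tilde d_t(S)\ge 0$) is precisely that unwinding. No gap.
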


Next, we provide the following criteria for identifying segments in which the debt does not increase.
\begin{cor}\label{cor: debt prop}
Let $t\in\N$ and $S\in \seg_{t}$. Any of the following are sufficient conditions for $\Delta d_t(S)\le 0$: 
\begin{enumerate}[label=\textnormal{(\alph*)}]
    \item\label{cor: debt prop, mu inc -> d dec} $\Delta \ppo_t(S) \ge 0,$
    \item\label{cor: debt prop, no new debt post-trns} $S$ is simple at times $t-1$ and $t$, i.e. $\ell_{t-1}(S) =\ell_{t}(S)= 0$.
    \item\label{cor: debt prop, no new debt no dis} $S \notin \dis_t.$
\end{enumerate}
\end{cor}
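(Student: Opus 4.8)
The plan is to observe that each of the three conditions (a)--(c) forces $\Delta\ppo_t(S)\ge 0$, and that this inequality alone suffices to conclude. Recall Observation~\ref{obs: d > 0 implies del d < - del mu}, which states $\Delta d_t(S)\le\big(-\Delta\ppo_t(S)\big)_+$. As soon as $\Delta\ppo_t(S)\ge 0$, the right-hand side equals $0$, and hence $\Delta d_t(S)\le 0$. So the corollary reduces entirely to verifying non-negativity of the pre-potential increment under each hypothesis; the amortisation machinery (the $\lambda_t$ term in the definition of $d_t$) plays no role here, since it can only decrease $d_t(S)$ and is already accounted for in the Observation.

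Condition~\ref{cor: debt prop, mu inc -> d dec} is literally the statement $\Delta\ppo_t(S)\ge 0$, so there is nothing further to check. For condition~\ref{cor: debt prop, no new debt post-trns}, $\ell_{t-1}(S)=\ell_t(S)=0$ says $S$ is simple at both consecutive times, and Proposition~\ref{prop: mu not ex nihilo simple}\ref{prop: mu not ex nihilo simple: itm simple} then gives $\Delta\ppo_t(S)\ge 0$ directly. For condition~\ref{cor: debt prop, no new debt no dis}, $S\notin\dis_t$ is exactly the hypothesis of Proposition~\ref{prop: mu not ex nihilo}, whose item~\ref{prop: mu not ex nihilo: itm ppo} again yields $\Delta\ppo_t(S)\ge 0$. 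Chaining each of these with Observation~\ref{obs: d > 0 implies del d < - del mu} produces $\Delta d_t(S)\le 0$ in all three cases.

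The only point meriting a word of care is the interpretation of the backward differences at a doubling time $t\in\tms_2$, where $d_{t-1}(S)$, $\ppo_{t-1}(S)$ and $\ell_{t-1}(S)$ are all obtained by aggregating over the ancestor segments of $S$ in $\seg_{t-1}$. But this aggregation convention is already baked into the statements of Observation~\ref{obs: d > 0 implies del d < - del mu} and of Propositions~\ref{prop: mu not ex nihilo simple} and~\ref{prop: mu not ex nihilo}, so no additional argument is required. I do not anticipate a genuine obstacle: the corollary is a short bookkeeping deduction, whose substance lies entirely in the two "not ex nihilo" propositions and in the fact that the debt can only grow when the pre-potential drops.
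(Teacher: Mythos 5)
Your proposal is correct and follows essentially the same route as the paper: item (a) is exactly Observation~\ref{obs: d > 0 implies del d < - del mu}, and items (b) and (c) are obtained by feeding Proposition~\ref{prop: mu not ex nihilo simple}\ref{prop: mu not ex nihilo simple: itm simple} and Proposition~\ref{prop: mu not ex nihilo}\ref{prop: mu not ex nihilo: itm ppo}, respectively, into that observation. Your remark that the $\lambda_t$ compensation term and the doubling-time aggregation conventions require no extra care matches the paper's (even terser) treatment.
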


\begin{proof}
\ref{cor: debt prop, mu inc -> d dec} follows immediately from Observation~\ref{obs: d > 0 implies del d < - del mu}. \ref{cor: debt prop, no new debt post-trns} and \ref{cor: debt prop, no new debt no dis} follow by combining~\ref{cor: debt prop, mu inc -> d dec} with Propositions~\ref{prop: mu not ex nihilo simple} and~\ref{prop: mu not ex nihilo} respectively.
\end{proof}

Finally, we show that $\phi_t(S)$ together with the debt cannot exceed the width of a segment.
\begin{prop}\label{prop: d + phi <= h}
Let $t\in\N$ and $S\in\seg_t$. Then $\phi_t(S) + d_t(S) \le h_t$.
\end{prop}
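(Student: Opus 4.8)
The plan is to argue by induction on $t$, tracking the quantity $\Psi_t(S) := \phi_t(S) + d_t(S)$ and showing $\Psi_t(S) \le h_t$. For $t=0$ the bound holds since $\phi_0(S) = b_0(S) \le h_0$ and $d_0(S) = 0$. For the inductive step, I would first record the value of $\Psi_{t-1}(S)$ after the merging step~\eqref{eq: debt def}/\eqref{eq: phi def}: since $\phi_{t-1}(S) = \sum_{S' \in P_{t-1}(S)} \phi_{t-1}(S')$ and $d_{t-1}(S) = \sum_{S' \in P_{t-1}(S)} d_{t-1}(S')$, the induction hypothesis at time $t-1$ gives $\phi_{t-1}(S) + d_{t-1}(S) \le \sum_{S'} h_{t-1} = |\anc_{t-1}(S)| \cdot h_{t-1} = h_t$ when $t$ is a doubling time, and $= h_{t-1} = h_t$ otherwise; in all cases $\Psi_{t-1}(S) \le h_t$. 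So it suffices to show $\Delta \Psi_t(S) := \Psi_t(S) - \Psi_{t-1}(S) \le 0$ whenever $\Psi_{t-1}(S) = h_t$, or more robustly, to split into cases and show $\Psi_t(S) \le h_t$ directly in each.

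The key case analysis is on the type of segment at time $t$. If $\nsim_t(S) = 1$ (spreading), then $\phi_t(S) = \rng_t(S) \le |S \times \{t+\ell_t(S)\}| = h_t$ by definition of the range (at most $h_t$ endpoints in a single row of the segment), so it remains to handle the debt; here I would use Observation~\ref{obs: d > 0 implies del d < - del mu}, i.e. $\Delta d_t(S) \le (-\Delta\ppo_t(S))_+$, together with the fact that $\Delta \ppo_t(S) = \Delta\phi_t(S) + \Delta f_t(S)$, to get $\Delta d_t(S) \le (\Delta f_t(S) + \Delta\phi_t(S))_+ \cdots$ — actually the cleaner route is: $d_t(S) \le \tilde d_t(S) = d_{t-1}(S) + (-\Delta\ppo_t(S))_+$, and if $\Delta\ppo_t(S) \ge 0$ then $d_t(S) \le d_{t-1}(S)$ and $\phi_t(S) \le \rng_t(S) \le h_t$ needs combining with $\phi_{t-1}(S)$... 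This suggests the real invariant to carry is not just $\phi + d \le h$ but something that also controls how $\phi$ can jump up; so I would instead handle the spreading case by noting $d_t(S) \le \tilde d_t(S) = d_{t-1}(S) + (\phi_{t-1}(S) - \phi_t(S) - \Delta f_t(S))_+ \le d_{t-1}(S) + \phi_{t-1}(S) - \phi_t(S)$ when $\Delta\ppo_t(S) < 0$ (using $\Delta f_t(S) \ge 0$), whence $\phi_t(S) + d_t(S) \le \phi_{t-1}(S) + d_{t-1}(S) \le h_t$; and when $\Delta\ppo_t(S) \ge 0$ we have $\Delta d_t(S) \le 0$ so $d_t(S) \le d_{t-1}(S)$, and then we need $\phi_t(S) \le h_t - d_{t-1}(S)$. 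The simulative case $\nsim_t(S) = 0$ is easier: $\phi_t(S) = (\phi_{t-1}(S) - \Delta f_t(S))_+ \le \phi_{t-1}(S)$, and for the debt, when $\ell_{t-1}(S) = \ell_t(S) = h_t$ (genuinely simulative at both times, modulo doubling) or more generally using that $\Delta \ppo_t(S)$ can be negative only via the $(\cdot)_+$ truncation in $\phi$, i.e. only when $\phi_{t-1}(S) < \Delta f_t(S)$, in which case $\phi_t(S) = 0$ and $d_t(S) \le d_{t-1}(S) + \Delta f_t(S) - \phi_{t-1}(S)$... again one wants $\phi_{t-1} + d_{t-1} + \Delta f_t - \phi_{t-1} \le h_t$, which is NOT obviously true.

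The main obstacle, then, is precisely that in a simulative segment a disruption (or even ordinary deletions when $\phi$ has already been whittled down below the number of newly counted firefighters) can push debt up by roughly $\Delta f_t(S)$ while $\phi_t(S)$ only drops to $0$, and $\Delta f_t(S)$ can be as large as $|S \times L_t(S)| \sim h_t^2$, which would blow the bound. Resolving this requires using the structural constraints the strategy imposes on $F_t(S)$ and $d_t(S)$ that haven't yet been exploited: I expect the proof actually needs a finer invariant or an appeal to a not-yet-stated lemma bounding $f_t(S)$ inside a simulative segment (something like: in a simulative segment, at most $\phi_{t-1}(S)$ deleted vertices are ever counted before the segment leaves simulative mode, so the truncation in~\eqref{eq: phi def} never actually fires for a segment that stays simulative, forcing $\Delta\ppo_t(S) \ge 0$ throughout simulative periods). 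Granting such a statement — that for $S$ simulative at time $t$ with $S \notin \dis_t$ one has $\Delta\ppo_t(S) \ge 0$ (Proposition~\ref{prop: mu not ex nihilo}\ref{prop: mu not ex nihilo: itm ppo} already gives this!), and for $S \in \dis_t$ the debt-offloading mechanism via $\lambda_t$ together with a bound $\Delta f_t(S) \le $ (something the debt can absorb) applies — the induction closes. Concretely I would: (i) dispose of $S \notin \dis_t$ using Corollary~\ref{cor: debt prop}\ref{cor: debt prop, no new debt no dis} ($\Delta d_t(S) \le 0$) plus $\phi_t(S) \le \phi_{t-1}(S)$ in the simulative sub-case and $\phi_t(S) = \rng_t(S) \le h_t$ with $d_t(S) \le d_{t-1}(S)$ requiring care in the spreading sub-case; (ii) for $S \in \dis_t$, use that a disrupted segment is reset to spreading ($\nsim_t(S) = 1$ since $\ct_t(S) = \H_t > 0$) so $\phi_t(S) = \rng_t(S) \le h_t$, hence $\phi_t(S) + d_t(S) \le h_t$ would follow if $d_t(S) = 0$ after a disruption — which I would argue is forced because a disrupted segment gets $\phi$ reset via the range, and the debt it carried should be offloaded or reset; checking this last point against definition~\eqref{eq: debt def} is the crux, and if it fails one must carry the stronger invariant $\phi_t(S) + d_t(S) \le h_t$ jointly with a bound on $d_t(S)$ alone (e.g. $d_t(S) \le h_t - \rng_t(S)$ whenever $S$ is spreading) through the induction.
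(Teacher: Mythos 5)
Your induction setup (carry $\phi_{t-1}(S)+d_{t-1}(S)\le h_t$ through the merging of ancestors) is the same as the paper's, and your treatment of the subcase $\Delta\ppo_t(S)<0$ (where $d_t(S)\le\tilde d_t(S)=d_{t-1}(S)-\Delta\ppo_t(S)\le d_{t-1}(S)+\phi_{t-1}(S)-\phi_t(S)$) is correct. But the argument does not close, and the place where you locate the difficulty is not where it actually lies. First, the ``main obstacle'' you describe — a simulative segment in which $\Delta f_t(S)$ is large and $\phi_t(S)$ truncates to $0$, allegedly pushing the debt up by about $\Delta f_t(S)$ — does not exist: by \eqref{eq: phi def}, $\phi_t(S)=(\phi_{t-1}(S)-\Delta f_t(S))_+\ge \phi_{t-1}(S)-\Delta f_t(S)$, so $\Delta\ppo_t(S)=\Delta\phi_t(S)+\Delta f_t(S)\ge 0$ for every simulative step (this is exactly the first display in the proof of Proposition~\ref{prop: mu not ex nihilo}), and the debt increment is $(-\Delta\ppo_t(S))_+$, never $\Delta f_t(S)$; moreover a disrupted segment has its timer reset, hence is spreading, not simulative, at time $t$. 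Second, the case you leave open — $\Delta\ppo_t(S)\ge 0$ with $d_{t-1}(S)>0$, where in a spreading step $\phi_t(S)=\rng_t(S)$ may jump above $\phi_{t-1}(S)$ and you ``need $\phi_t(S)\le h_t-d_{t-1}(S)$'' — is the genuine crux, and neither of your fallback suggestions (debt forced to $0$ after a disruption, or the auxiliary invariant $d_t(S)\le h_t-\rng_t(S)$) is established or true in general: debt is not reset by a disruption, it is only paid down via the $\lambda_t$ mechanism.

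The paper closes this case by splitting on the value of the debt at time $t$, not on the segment type. If $d_t(S)=0$ the bound is immediate from $\phi_t(S)\le h_t$. If $d_t(S)>0$, one shows $\Delta d_t(S)+\Delta\phi_t(S)\le 0$: for $\Delta\ppo_t(S)\le 0$ this is your computation, and for $\Delta\ppo_t(S)>0$ the point you are missing is that then $\lambda_t(S)=S$ by \eqref{eq: lambda def}, $\tilde d_t(S)=d_{t-1}(S)>0$ (using $d_t(S)>0$ and Corollary~\ref{cor: debt prop}\ref{cor: debt prop, mu inc -> d dec}), and the intermediate condition in \eqref{eq: debt def} is vacuous, so the first branch of \eqref{eq: debt def} applies and the segment's own increase offsets its debt: $\Delta d_t(S)=-\Delta\ppo_t(S)\le-\Delta\phi_t(S)$. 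With this inequality in hand, $\phi_t(S)+d_t(S)\le\phi_{t-1}(S)+d_{t-1}(S)\le h_t$ by the induction hypothesis applied to the ancestors, exactly as in your opening paragraph. So the missing idea is the observation that a segment which still carries positive debt after the update either gained nothing this turn or used its own potential increase to pay the debt down by at least $\Delta\phi_t(S)$; without it, the spreading-with-existing-debt case is unproved.
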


\begin{proof}
We start by proving the following property for any $t\in\N$ and $S\in\seg_{t}$.
\begin{equation}\label{eq: d>0 -> del d + del phi le 0}
    d_t(S) > 0 \implies \Delta d_t(S) + \Delta \phi_t(S) \le 0.    
\end{equation}
In case that $\Delta \ppo_t(S) \le 0$, \eqref{eq: d>0 -> del d + del phi le 0} is immediate by plugging $\Delta \phi_t(S) \le \Delta \ppo_t(S)$ into  Observation~\ref{obs: d > 0 implies del d < - del mu}.
Assuming $\Delta \ppo_t(S) > 0$ we have $\lambda_t(S) = S$, by~\eqref{eq: lambda def}. 

We now show that in this case $d_t(S)=\tilde d_t(S) - \Delta\ppo_t(\lambda_{t}(S))$, namely that the conditions for the first case of \eqref{eq: debt def} hold.
As $\lambda_t(S) = S$, the fact that $\lambda_t(S)\neq -\infty$ is straightforward, and the condition that $\tilde d_t(Q) = 0$ for every $\lambda_t(S) \le Q < S$ holds vacuously. Finally, observe that
\[\tilde d_t(S) = d_{t-1}(S)+(-\Delta \ppo_t(S))_+   = d_{t-1}(S) =d_t(S) - \Delta d_t(S) > -\Delta d_t(S) \ge  0,\]
where the first inequality uses 
our assumption
$d_t(S)>0$, and the last inequality uses Corollary~\ref{cor: debt prop}\ref{cor: debt prop, mu inc -> d dec}. Therefore,
\[\Delta d_t(S) = \tilde d_t(S) - \Delta\ppo_t(\lambda_{t}(S)) - d_{t-1}(S)= -\Delta\ppo_t(S) \le -\Delta \phi_t(S),\]
from which \eqref{eq: d>0 -> del d + del phi le 0} follows.

We now prove the proposition 
 by induction on $t$. The case $d_t(S) = 0$, which serves also as the basis of the induction, as it holds at time-step $t=0$, is straightforward, as $\phi_t(S) \le h_t$ by~\eqref{eq: phi def}. Otherwise, $d_t(S) > 0$, so that \eqref{eq: d>0 -> del d + del phi le 0} together with the induction hypothesis imply
\[d_t(S) + \phi_t(S) = d_{t-1}(S) + \phi_{t-1}(S) + \Delta d_t(S) + \Delta \phi_t(S) \le\hspace{-6pt} \sum_{S'\in \anc_{t-1}(S)}\hspace{-10pt}(\phi_{t-1}(S') + d_{t-1}(S'))  \le \sum_{S'\in \anc_{t-1}(S)}\hspace{-10pt}h_{t-1}= h_{t}.\qedhere\]
\end{proof}



\subsection{Bounding the debt}\label{subsection: debt bdd}

The purpose of this section is to 
establish Proposition~\ref{prop: no fire -> no ptl no dbt}, showing that all debts are paid before the occupied set dies out, namely $B_t = \emptyset$ implies $\phi_t(\seg_t) = 0$ and $d_t(\seg_t) = 0$.

The main proposition of this section (established in Section~\ref{section: bound debt duration} below) states that the debt accumulated in a given segment is bound to diminish and eventually nullify with a given time-frame. 

\begin{prop}\label{prop: debt duration bdd}
For all $t > 0$ and $S\in \seg_t$ we have $\max\{s\le t\ :\ d_{s-1}(S) = 0\} \ge t-2\str h_t^2 - 2h_t$.
\end{prop}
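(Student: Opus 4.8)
\emph{Proof plan.}
The asserted inequality says exactly that $d_s(S)$ fails to be positive throughout the interval $[t-\str h_t^2-h_t-1,\ t-1]$, a run of $\str h_t^2+h_t+1$ consecutive times; so it suffices to show that $d_s(S)$ can never be positive for $\str h_t^2+h_t+1$ consecutive steps. Suppose, for contradiction, that $d_s(S)>0$ for all $s\in[a,a+\str h_t^2+h_t]$; extending the run to the left we may assume $d_{a-1}(S)=0$. The plan is to show that such a debt is fully repaid within $\str h_t^2+h_t$ steps of $a$, contradicting the assumption.

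First I would locate the start of the episode. Since $\Delta d_a(S)>0$, none of the three sufficient conditions of Corollary~\ref{cor: debt prop} can hold at time $a$; in particular $S\in\dis_a$ and $S$ is not simple at times $a-1,a$. By~\eqref{eq: timer-def} and~\eqref{eq: nsim-def} the consolidation timer is reset, $\ct_a(S)=\H_a$, and $\nsim_a(S)=1$. Using Claim~\ref{clm: timer large after doubling} and the doubling condition~\eqref{eq: doubling condition} to keep $h_s=\Theta(h_t)$ over the window and to guarantee $\H_a$ exceeds $\str h_t^2+h_t$, and using~\eqref{eq: look-ahead-def} (the look-ahead strictly decreases while $S$ spreads, starting from $\ell_a(S)<h_a$), I get that $S$ becomes \emph{simple} within at most $h_t$ steps of $a$ and then remains simple for more than $\str h_t^2+h_t$ steps (its timer, possibly refreshed by later disruptions, cannot expire that soon). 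By Proposition~\ref{prop: d + phi <= h} the debt of $S$ never exceeds $h_s=O(h_t)$.

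The core of the argument is to show that while $S$ is simple a positive debt strictly decreases. Suppose $S$ is simple at $s-1,s$ and $d_{s-1}(S)>0$. Then $\Delta\ppo_s(S)\ge 0$ by Proposition~\ref{prop: mu not ex nihilo simple}\ref{prop: mu not ex nihilo simple: itm simple}, so $\tempd_s(S)=d_{s-1}(S)>0$; and since $d_s(S)>0$ throughout the episode, Proposition~\ref{prop: d + phi <= h} together with \eqref{eq:simple phi} forces $b_s(S)=\phi_s(S)<h_s$, so the occupied set leaves a gap in $S\times\{s\}$. I would argue that as long as the debt created at time $a$ survives, this gap is part of a cavity of the front whose closure lies inside the region that the disruption at $a$ set spreading --- i.e.\ inside the alert interval of~\eqref{eq: alert-interval-def}, every segment of which is, by the previous paragraph applied region-wide, spreading and by now simple --- and that $S$ lies in that cavity. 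Letting $Q$ be the segment holding the occupied vertex that bounds the cavity on the left, this vertex has an unoccupied right neighbour, so Proposition~\ref{prop: mu not ex nihilo simple}\ref{prop: mu not ex nihilo simple: itm fogarty} gives $\Delta\ppo_s(Q)\ge 1$, while the segments between $Q$ and the leftmost indebted segment of the cavity are fully occupied and hence (Proposition~\ref{prop: d + phi <= h}) carry $\tempd_s=0$. Thus~\eqref{eq: lambda def} and the first branch of~\eqref{eq: debt def} yield $d_s(S')\le d_{s-1}(S')-1$ for the leftmost indebted segment $S'$ in the cavity, so the cavity's total debt drops by at least $1$ per step. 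Since each indebted segment carries at most $h_t$ debt, and since the alert mechanism of~\eqref{eq: alert-interval-def} together with the fact that Container disrupts at most $\str$ segments per step keeps only $O(\str h_t)$ segments indebted at a time in such a cavity, the whole debt is cleared within $O(\str h_t^2)$ further steps --- so within $\str h_t^2+h_t$ steps of $a$ in total, contradicting $d_s(S)>0$ on $[a,a+\str h_t^2+h_t]$.

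The hard part is the geometric content of the third paragraph: proving that a surviving debt really does force a cavity whose left boundary lies in a \emph{simple} segment to which Proposition~\ref{prop: mu not ex nihilo simple}\ref{prop: mu not ex nihilo simple: itm fogarty} applies, that the segments between there and $S$ are debt-free so the compensation propagates down the cavity in the correct left-to-right order, and --- for the quantitative bound --- that the alert interval both confines the cavity and makes it fill up (hence the debt clear) in $O(\str h_t^2)$ rather than the naive $O(\str^2 h_t^3)$ steps. The remaining ingredients --- the reduction in the first paragraph, the look-ahead and timer bookkeeping of the second, and the arithmetic with~\eqref{eq: debt def} --- are routine.
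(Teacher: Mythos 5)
Your outline shares the paper's philosophy (Fogarty-type compensation at the left edge of a cavity, repayment of the leftmost indebted segment via \eqref{eq: debt def}, a total-debt bound of order $\str h_t^2$ decreasing by one per step), but the two steps you defer as ``the hard part'' are exactly where the proposition is actually won, and your sketch of them would not close. First, the repayment rule \eqref{eq: debt def} only reduces the debt of the leftmost indebted segment whose left neighbours down to $\lambda$ are clean, so before $S$'s debt can decrease, \emph{all} debt sitting to its left in the controlled region must be cleared --- including debt created in neighbouring segments \emph{before} time $a$ (your normalisation $d_{a-1}(S)=0$ says nothing about them) and debt created \emph{after} $a$ by fresh disruptions. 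Your per-time count (``only $O(\str h_t)$ segments indebted at a time in such a cavity'') is both unjustified and insufficient: a bounded instantaneous debt that keeps being replenished never forces termination. The paper closes this with a strengthened statement proved by induction on $t$, Proposition~\ref{prop: debt duration bdd strong}\ref{prop: debt duration bdd strong, sim no debt}: whenever $S$ is simulative, the whole region $I_t(S)\cap(-\infty,S+h_t]$ is debt-free. Applying it at $t_0$, the last simulative time of $S$ (which precedes the creation of the debt by at most $h_{t_0}$), the only debt ever present in $\cev J$ is the at most $\str h_{t_0}\cdot h_t\le \str h_t^2$ created during $[t_0+1,t_0+h_{t_0}]$, and none can be added later because all segments of $\cev J$ are by then simple at consecutive times (Corollary~\ref{cor: debt prop}\ref{cor: debt prop, no new debt post-trns}). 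Nothing in your proposal plays the role of this auxiliary induction hypothesis, and without it the naive bound you mention, of order $\str^2h_t^3$, is all one gets.

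Second, the per-step decrease itself needs, at every step of the repayment window, the hypotheses of Claim~\ref{clm: interval fogarty}: at least \emph{two} occupied segments to the left of the indebted one inside the controlled interval --- one to trigger Proposition~\ref{prop: mu not ex nihilo simple}\ref{prop: mu not ex nihilo simple: itm fogarty}, and one more because \eqref{eq: lambda def} forbids compensation from the leftmost occupied segment. You assert that the alert interval ``confines the cavity and makes it fill up,'' but Container deletes $\str$ vertices per turn for up to $\str h_t^2+h_t$ turns and could a priori erase the cavity's left boundary; the paper derives the needed occupancy from the defining path count of the alert interval at time $t_0$ together with Observation~\ref{obs: pathcol mono time}, giving $b_{t'}(\cev J\setminus\{S\})\ge \H_{t_0}-h_{t_0}-\str(t'-t_0)>h_t$ throughout, which is precisely where the choice $H_t=4\str^2h_t^2$ enters. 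Finally, a smaller but genuine error: the segments between the cavity's left boundary and the leftmost indebted segment lie \emph{inside} the cavity and are unoccupied, so your appeal to Proposition~\ref{prop: d + phi <= h} to conclude $\tempd_s=0$ there fails; in the paper their cleanliness comes from the minimal choice of the indebted segment in Claim~\ref{clm: interval fogarty}, which again presupposes the control of old debt discussed above.
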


Segments near the edges of the occupied set must have been simple for a while. During this time their debt couldn't have increased, and hence, by Proposition~\ref{prop: debt duration bdd} they
are free of debt. This is captured by the following corollary.

\begin{cor}\label{cor: small fire -> simple and debt free}
Let $t\in \N$ and $S\in\seg_t$ such that $\min\big(|B_t([S,\infty))|,|B_t((-\infty,S])|\big) \le 2h_t$. Then $\ell_{t'}(S) = 0$ and $d_{t'}(S) = 0$ for $t'\in\{t-1,t\}$.
\end{cor}
\begin{proof}
Let $t'\in [t - 1 -\str h_t^2 - h_t,t]$.
Under the assumptions of the proposition, we obtain, by Proposition~\ref{prop: simulated path},
$\spr_{t'}(S) = 1$. This implies that 
$\ell_{t'}(S) = 0$, by \eqref{eq: look-ahead-def}. From this we deduce, using Corollary~\ref{cor: debt prop}\ref{cor: debt prop, no new debt post-trns}, that $d_t(S)\le d_{t'}(S)$. The proposition now follows from Proposition~\ref{prop: debt duration bdd}.
\end{proof}

Using this we easily establish Proposition~\ref{prop: no fire -> no ptl no dbt}.
\begin{proof}[Proof of Proposition~\ref{prop: no fire -> no ptl no dbt}]
Assume that $B_t = \emptyset$ and let $S\in\seg_t$. By Lemma~\ref{lem: phi < r} we have $\phi_t(S) = 0$.
In addition, $b_t([S,\infty))= 0 \le 2h_t$, and hence, by Corollary~\ref{cor: small fire -> simple and debt free}, $d_t(S) = 0$. Putting these together we obtain that $\phi_t(\seg_t) + d_t(\seg_t) = 0$, as required.
\end{proof}

\subsubsection{Upper bound on the duration of the debt }\label{section: bound debt duration}

This section is dedicate to the proof of Proposition~\ref{prop: debt duration bdd}.
The main idea here is that after a debt is created, the front must contain cavities in the occupied set. Each cavity gradually closes, incurring an increased growth of the occupied set that could be used to pay the debt. Let us illustrate how this unfolds.
Each indebted segment $S$ is contained in such a cavity, whose leftmost edge is within the segment $\lambda_t(S)$, in which $\ppo_t$ increases. As this increase is not not expected in a fully occupied front, it can be used to reduce the debt of segments to its right, eventually paying also for $S$ and eliminating its debt. 
 
\begin{proof}[Proof of Proposition~\ref{prop: debt duration bdd}]


Let $t\in\N$ and $S\in\seg_t$.
Only spreading segments can increase $\ppo_t$ and pay debts. Hence it would be convenient to 
prove the following stronger statement, by induction on $t$.

\begin{enumerate}[label=\textnormal{(\alph*)}]    
    \item\label{prop: debt duration bdd strong, sim no debt} If $d_t\!\left(\seg_t\big(I_t(S) \cap (-\infty,S + h_t]\big)\right) > 0$ then
    $\spr_{t}(S) = 1$.
    \item\label{prop: debt duration bdd strong, bdd}
    $\max\{s\le t\ :\ d_{s-1}(S) = 0\} \ge t-2\str h_t^2 - 2h_t$.
\end{enumerate}

The basis of the induction is straightforward, as $d_0(S) = 0$ for all $S\in\seg_0$. Assume that the proposition holds for all $t' \le t-1$. 
Given a segment $S'\in \seg_t$, denote $t_{S'} := \max\{s\le t\ :\ d_{s-1}(S') = 0\}$ and $d_{t_{S'}}(S')>0$, so that by Corollary~\ref{cor: debt prop}\ref{cor: debt prop, no new debt no dis} we have $S' \in \dis_{t_{S'}}$.
By~\ref{prop: debt duration bdd strong, bdd} applied to time-step $t-1$, we have
\begin{equation}\label{eq: tQ > t - C}
  t_{S'} \ge \max\{s\le t-1\ :\ d_{s-1}(S') = 0\} \ge t-1 - 2\str h_{t-1}^2 - 2h_{t-1} > t-\tfrac1{5q}H_t>t-\tfrac4{5q}H_{t_{S'}},  
\end{equation}
where the second-to-last inequality uses the fact that $H_t = 11\str^2 h_t^2$, and our assumption $h_0 \ge 100$, and the last inequality uses~\eqref{eq: doubling condition}.

Next, let $S\in\seg_t$.
Towards proving~\ref{prop: debt duration bdd strong, sim no debt}, i.e. that $S$ is spreading, 
denote $I_S := I_t(S) \cap (-\infty,S + h_t]$ and assume that there exists some $Q\in \seg_t(I_S)$ such that $d_t(Q) > 0$. 
Denote $\vec J := I_{t_Q}(Q) \cap [Q,\infty)$. At time-step $t_Q$ the countdown of all segments $S'\subset\vec J$ was reset to $H_{t_Q}$ so that for all $t'\in[t_Q,t_Q+H_{t_Q}-1]$, we have $\spr_{t'}(S')=1$. In addition, applying \eqref{eq: tQ > t - C} with $S'=Q$, we obtain $t<t_Q+H_{t_Q}$. Thus, in order to show~\ref{prop: debt duration bdd strong, sim no debt}, we are left with showing $S\subset \vec J$. As $Q\in \seg_t(I_S)$, we have $\dispath{t}{\H_t}([\max Q, \min S)) \le \H_t$ by~\eqref{eq: alert-interval-def}, and hence
\[\dispath{t}{\H_{t}}(\vec J \cap (-\infty,S]) \le \dispath{t}{\H_t}([Q, S]) \le \dispath{t}{\H_t}([\max Q, \min S)) + 2h_t \le \H_{t} + 2h_t.\]
It would therefore suffice to show $b_t^{\H_t}(\vec {J})>H_t+2h_t$. 
When $I_Q$ is infinite this is straightforward; otherwise, by~\eqref{eq: alert-interval-def}, we have $\dispath{t_Q}{\H_{t_Q}}(J) \ge 2\H_{t_Q}$. Hence 
\[\dispath{t}{\H_{t}}(\vec J) \ge 
\dispath{t}{2\H_{t_Q} - (t-t_Q)}(\vec J) \ge \dispath{t_Q}{2\H_{t_Q}}(\vec J)  
- \floor{\str t}+\floor{\str t_Q}
\ge 5\H_{t_Q}
- \floor{\str t}+\floor{\str t_Q} > \H_{t} + 2h_t,\]
where the first inequality uses $t<t_Q+H_{t_Q}$ and Observation~\ref{obs: pathcol mono}\ref{obs: pathcol mono, length}, the second is by Observation~\ref{obs: pathcol mono time}, the third uses~\eqref{eq: alert-interval-def}, and the fourth is obtained by plugging in~\eqref{eq: tQ > t - C} and $h_{t} \le 2h_{t_Q}$.
Hence $S\subset \vec J$ and so \ref{prop: debt duration bdd strong, sim no debt} holds at time-step $t$.

Next, towards proving~\ref{prop: debt duration bdd strong, bdd}, denote $t_S := \max\{s\le t\ :\ d_{s-1}(S) = 0\}$ and $t_0 := \max\{s \le t_S : \spr_{s}(S) = 0\}$. If $t_S=t$, then~\ref{prop: debt duration bdd strong, bdd} is straightforward. Otherwise, observe that, as before, $S \in \dis_{t_S}$, so that, by \eqref{eq: nsim-def} and \eqref{eq: countdown-def}, $\spr_{s}(S) = 1$ for all $s\in [t_S,t_S+H_{t_S}-1]$ . Hence, by Observation~\ref{obs: lookahead dies out}, $\ell_s(S)=0$ for all $s\in [t_0+2h_{t_0},\dots,t_S + \H_{t_S}-1]$, so that $\Delta d_s(S)\le 0$, by Corollary~\ref{cor: debt prop}\ref{cor: debt prop, no new debt post-trns}.
In particular,
\begin{equation}\label{eq: debt red times}
    t_0 \le t_S \le t_0  + 2h_{t_0}.
\end{equation}
From~\eqref{eq: tQ > t - C} applied with $S'=S$, we obtain, using~\eqref{eq: debt red times}, that
\begin{equation}\label{eq: t-t0 < const}
    t-t_0  \le t-t_S + 2h_{t_0} \le \tfrac1{5q}H_t + 2h_{t_0}\le\tfrac4{5q}H_{t_0} + 2h_{t_0}.
\end{equation}
Using~\eqref{eq: doubling condition}, this implies that $h_{t} \le 2h_{t_0}$, namely that $|[t_0,t]\cap \tms_2|\le 1$.

Next, writing $S'=\min \,\seg_{t_0}\!\left(I_{t_0}(S)\right)$, we define
\[\cev J := \big(I_{t_0}(S) \cap (-\infty,S]\big) \setminus S'.\]
Let $S_0 := \min \anc_{t_0}(S)$. Observe that, since $h_{t} \le 2h_{t_0}$, we have $\cev J \subseteq I_{t_0}(S_0) \cap (-\infty, S_0 + h_{t_0}]$, so that
$\seg_{t'}(\cev J) \subset I_{t_0}(S)$ (here $(-\infty, S_0]$ would have been insufficient, as it is possible that $\max \anc_{t_0}(S)=S_0 + h_{t_0}$). As $\spr_{t_0}(S) = 0$, we  have, by definition, $\spr_{t_0}(S_0) = 0$, so that, from~\ref{prop: debt duration bdd strong, sim no debt} we obtain $d_{t_0}(\cev J) = 0$.
At most $2\str h_t$ segments could be disrupted at times $[t_0+1,\dots,t_0+2h_{t_0}]$, each disrupted segment has at most $h_t$ debt (by Proposition~\ref{prop: d + phi <= h}), and debt can only be created via a disruption (by Corollary~\ref{cor: debt prop}\ref{cor: debt prop, no new debt no dis}), which implies $d_{t_0 + 2h_{t_0}}(\cev J) \le 2\str h_t^2$.

We are left with showing that $\Delta d_{t'}(\cev J) \le -1$ for all $t_0+2h_{t_0} \le t' \le t$, as since 
$d_{t}(\cev J) > 0$, 
this would imply $t-t_0 \le 2\str h_t^2 + 2h_{t_0}$, which, using~\eqref{eq: debt red times}, would imply \ref{prop: debt duration bdd strong, bdd} via
\[t-t_S  \le t-t_0 \le 2\str h_t^2 + 2h_{t_0} \le 2\str h_t^2 + 2h_t.\] 

Similarly to before, by Observation~\ref{obs: lookahead dies out} together with~\eqref{eq: t-t0 < const}, for all $Q\in \cev J$ and all ${t'}\in [t_0+2h_{t_0},\dots,t]$, we have $\ell_{t'}(Q)=0$,
so that $\Delta d_{t'}(Q)\le 0$, by Corollary~\ref{cor: debt prop}\ref{cor: debt prop, no new debt post-trns}. Thus, to prove that $\Delta d_{t'}(\cev J) \le -1$, it would suffice to show the existence of $Q=Q(t') \subset \cev J$ such that $\Delta d_{t'}(Q) \le -1$.

Let $t_0+2h_{t_0} \le t' \le t$. We have
\begin{equation}\label{eq: two occupied segments in J}
    b_{t'}(\cev J \setminus\{S\}) \ge b_{t'}(\cev J) - 2h_{t_0} \ge \dispath{t_0}{H_{t_0}}(\cev J) -\floor{qt'}+\floor{qt_0} -2h_{t_0} \ge \H_{t_0} - 2h_{t_0} -\floor{qt'}+\floor{qt_0} > h_{t},
\end{equation}
where the first inequality is by containment, the second is by Observation~\ref{obs: pathcol mono time}, the third uses \eqref{eq: alert-interval-def} and the forth is obtained by plugging in~\eqref{eq: t-t0 < const}. 
Denote
\[S_1 := S_{t'}(\max B_{t'-1}(\cev J \setminus \{S\}) + \{(1,0)\}) \quad \text{and} \quad
D := \min \{Q\in \seg_{t'}(\cev J) : Q \ge S_1, d_{t'-1}(Q) > 0\},\]
observing that $S_1$ is well-defined by~\eqref{eq: two occupied segments in J} and $D$ is well-defined as $S\ge S_1$ and $d_{t'-1}(S) > 0$.

We now show the following statements from which $\Delta d_{t'}(D)=-1$ follows by~\eqref{eq: lambda def} and \eqref{eq: debt def}.
\begin{itemize}
    \item $\lambda_{t'}(D)=S_1$.
    \item $\tilde d_{t'}\big(\seg_{t'}([S_1,D))\big) = 0$.
\end{itemize}

To see the first item, we verify the conditions of \eqref{eq: lambda def}. 
Note that, by definition, $S_1 \le D$ and $S_1 > S_{t'}(\min B_{t'})$, as, by~\eqref{eq: two occupied segments in J} there are at least two occupied segments in $\cev J \setminus \{S\}$. To see that $\Delta \ppo_t(S_1) \ge 1$ we wish to apply Proposition~\ref{prop: mu not ex nihilo simple}\ref{prop: mu not ex nihilo simple: itm fogarty}. The only non-trivial part is showing the existence of $x\in B_{t'-1}$ such that $x + (1,0)\notin B_{t'-1}(S_1)$. Whenever $S_1\neq S$, this holds for $x=\max B_{t'-1}(\cev J \setminus \{S\})$ by the definition of $S_1$. The remaining case is $S_1=S=D$, where we have $|B_{t'-1}(D)| = \phi_{t'-1}(D) \le h_{t'-1} - d_{t'-1}(D) < h_{t'-1} = |D|$ by Proposition~\ref{prop: d + phi <= h} and the fact $\ell_t(D) = 0$. Hence a suitable $x\in S_1$ exists.

To see the second item, Let $Q\in \seg_{t'}([S_1,D))$. Using $\ell_{t'-1}(Q) = \ell_{t'}(Q)=0$ together with Proposition~\ref{prop: mu not ex nihilo simple}\ref{prop: mu not ex nihilo simple: itm simple}, we obtain $\tilde d_{t'}(Q) = d_{t'-1}(Q)$. The item then follows by the definition of $D$.
\end{proof}

\subsection{Potential growth:  proof of Proposition~\ref{prop: Phi grows 1}}\label{subsection: growth, eighth plane}

This section consists of the proof of Proposition~\ref{prop: Phi grows 1}, showing that $\Delta\Phi_s(\seg_t) + \Delta d_s(\seg_t) \ge 1$ for every $s\le t$, whenever $B_{s-1}\ne \emptyset$.
Recalling that $\Phi_s(S) = \sum_{S'\in \anc_s(S)} \Phi_s(S')$ and $d_s(S)=\sum_{S'\in \anc_s(S)}d(S')$, it suffices to prove the proposition only for the case $s = t$.

Fix $t\in \N$ and denote 
\[\lowphiseg := \{S\in\seg_{t}: \Delta \Phi_t(S) + \Delta d_t(S) < 0\}\quad \text{and}\quad  S_1: = S_t(\max B_{t-1} + \{(1,0)\}).\]
We establish the proposition in two steps. First we show that for every $S\in\lowphiseg$ we have $\Delta \Phi_t(S) + \Delta d_t(S) + \Delta \Phi_t(\lambda_t(S)) + \Delta d_t(\lambda_t(S)) \ge 0$. and that the map $S\mapsto \lambda_t(S)$ is one-to-one in $\lowphiseg$. Then we show that $\Delta \Phi_t(S_1) + \Delta d_t(S_1) \ge 1$, and that $S_1 \ne \lambda_t(S)$ for any $S\in\lowphiseg$. From these three steps the proposition readily follows.

Let $S\in \lowphiseg$, denote $S'=\lambda_t(S)$, and observe that necessarily $\Delta \ppo_t(S) + \Delta d_t(S) \neq \Delta \ppo_t(S) + (-\Delta \ppo_t(S))_+  \ge 0$. Hence, $\Delta d_t(S) \ne (-\Delta \ppo_t(S))_+$ so that by~\eqref{eq: debt def}, $d_t(S) = \big(\tilde d_t(S) - \Delta \Phi_t(S')\big)_+$, and, in particular,  $S'\neq -\infty$.
Thus, using $\tilde d_t(S) = d_{t-1}(S) + (-\Delta\Phi_t(S))_+$, we obtain
\[\Delta d_t(S) \ge \tilde d_t(S) - \Delta \Phi_t(S') - d_{t-1}(S) = (-\Delta \Phi_t(S))_+ - \Delta \Phi_t(S'),\]
so that
\begin{equation}\label{eq: lower bound on Delta Phi + Delta d}
   \Delta \Phi_t(S) + \Delta d_t(S) \ge \Delta \Phi_t(S) + (-\Delta \Phi_t(S))_+ - \Delta \Phi_t(S') = (\Delta \Phi_t(S))_+ - \Delta \Phi_t(S'), 
\end{equation}
using the fact that $x + (-x)_+ = x_+$ for any $x\in\R$. Note that as
$S\in\lowphiseg$, the left hand size must be negative, so that $S' \neq S$. Again by~\eqref{eq: debt def} we have $\tilde d_t(Q) = 0$ for every $S'\le Q < S$. This implies that $Q\notin \lowphiseg$, and hence $S\mapsto\lambda_t(S)$ is one-to-one in $\lowphiseg$.
In particular, $\tilde d_t(S') = 0$, which implies $\Delta d_t(S') \ge 0$. 
Using this together with~\eqref{eq: lower bound on Delta Phi + Delta d}, we obtain
\begin{align*}
    \Delta \Phi_t(S) + \Delta d_t(S) + \Delta \Phi_t(S') + \Delta d_t(S') &\ge (\Delta \ppo_t(S))_+ - \Delta \ppo_t(S') + \Delta \ppo_t(S') + \Delta d_t(S')
    \\&= (\Delta \ppo_t(S))_+ + \Delta d_t(S') \ge \Delta d_t(S') \ge 0,
\end{align*}
concluding the first step in our proof plan.

Finally, consider $S_1$.
Observe that, by Proposition~\ref{cor: small fire -> simple and debt free}, $d_{t-1}(S_1)=0$ so that $\Delta \Phi_t(S_1) + \Delta d_t(S_1)\ge \Delta \ppo_t(S_1)$, which, by Proposition~\ref{prop: mu not ex nihilo simple}\ref{prop: mu not ex nihilo simple: itm fogarty}, implies $\Delta \Phi_t(S_1) + \Delta d_t(S_1) \ge 1$.
By the fact that all segments to the right of $S_1$ are non-occupied and thus free of debt by Proposition~\ref{cor: small fire -> simple and debt free}, we obtain $S_1\neq \lambda_t(S)$ for any $S\in\seg_t$.
\qed

\subsection{Controlling the range: proof of Lemma~\ref{lem: phi < r}}\label{subsection: phi < r}

This section is dedicated to the proof of Lemma~\ref{lem: phi < r}, stating that $\phi_t(S) \le \rng_t(S)$ for every $t\in\N$ and $S\in\seg_t$. We start by proving the following claim, showing that only simple segments consolidate.

\begin{clm}\label{clm: consolidation implies simple}
    Let $t\in\N$ and $S\in\seg_t$ such that $\spr_{t-1}(S)=1$ and $\spr_t(S) = 0$. Then $\ell_{t-1}(S) = 0$.
\end{clm}
\begin{proof}
    First, observe that $\ct_{t-1}(S) = 1$ by~\eqref{eq: nsim-def} and~\eqref{eq: countdown-def}. It is thus sufficient to show that $\ell_{t-1}(S) < \ct_{t-1}(S)$. 
    Denote $t_0 := \max\{s <  t : \spr_{s-1}(S) = 0\}$ for the time-step after the last time $S$ was simulative before $t$, and $t_2 := \max\{s\le t : s\in\tms_2\}$ for the last doubling time-step up to time-step $t$ (which could be $-\infty$ if no such time-step exist). We show by induction $\ell_{s}(S) < \ct_s(S)$ for every $s\in [max(t_0,t_2), t-1]$.
    
    The basis of the induction consists of two cases.
    For $s = t_0$, by the definition of $t_0$, one of the first two cases of~\eqref{eq: countdown-def} must hold and hence $\ct_{t_0}(S) \ge h_{t_0}$. Hence by \eqref{eq: look-ahead-def},
    \[\ell_{t_0}(S) = (\ell_{t_0-1}(S)-1)_+ = h_{t_0-1}-1 \le h_{t_0}-1 < \ct_{t_0}(S).\]
    For $s=t_2>t_0$, we have $\ct_{t_2-1}(S) > 0$. Hence, by Observation~\ref{obs: countdown large after doubling}, we have $\ct_{t_2-1} > h_{t_2-1}$, so that, by \eqref{eq: look-ahead-def}, 
    \[\ell_{t_2}(S) = (\ell_{t_2-1}(S) - 1)_+ \le h_{t_2-1} - 1 < \ct_{t_2-1} -1 \le \ct_{t_2}(S).\]

    At any later time-step $s\in [t_0+1,t-1]$ we know that $S$ is spreading and no doubling occurs, and hence either $\ell_{s}(S) = 0$, or $\Delta \ell_s(S) = \Delta\ct_s(S) = -1$, and the claim follows by induction.
\end{proof}

Next, we introduce the following notations, used only for the proof of the lemma. 
Denote 
\[(p_t([a,b]),t):=\min {B_t([a,b])}\]
for the leftmost occupied vertex in an interval $[a,b]$,
so that given $S\in\seg_t$ such that $\chi_t(S) = 0$  we have $B_t(S)\in\{\{(p_t(S),t)\},\emptyset\}$ by~\eqref{eq: Bt def contains}. For such a segment we write
\[w_t(S) := \begin{cases}\big|[\,p_t(S),\max S\,]\big| & B_t(S)=\{(p_t(S),t)\}\\0&B_t(S)=\emptyset\end{cases}\qquad\text{and}\qquad w_{t-1}(S) := \sum_{S' \in \anc_{t-1}(S)} w_{t-1}(S'),\]
Finally we write, 
\[\f_t(S) := \big|\FF_t \cap ([p_t(S),\max S] \times L_t(S))\big|.\]
In addition, for the purpose of this section we use extended notation $L^{\ell}_t(S)=S\times [t,t+\ell]$ so that $L_t(S)=L^{\ell_t(S)}_t(S)$.

From Claim~\ref{clm: accecibility implies faithful} we quickly draw the following.
\begin{clm}\label{clm: w - f < r}
    Let $S\in\seg_t$ with $\spr_t(S) = 0$ and $r_t(S) > 0$. Then $w_t(S)-\f_t(S) \le r_t(S)$.
\end{clm}
\begin{proof}
Recall our definition, $r_t^\ell(A;I)=|\{x\in G_t\ :\ \exists (x_0,\dots,x)\in P_t^\ell(I), x_0\in A|$.
   As $r_t(S) > 0$ we have
   $\rng_t^{\ell_t(S)}(\{(p_t(S),t)\};S) > 0$. Hence by applying Claim~\ref{clm: accecibility implies faithful} for $\ell = \ell_t(S)$ we get $w_t(S) - \f_t(S) \le r_t(S)$.
\end{proof}

With these claims in hand, we turn to the proof of the lemma itself.

\begin{proof}[Proof of Lemma~\ref{lem: phi < r}]
    Let $S\in\seg_t$.
    Observe that the lemma is straightforward when $\spr_t(S) = 1$ (as in that case $\phi_t(S) = \rng_t(S)$ by \eqref{eq: phi def}), and when $\phi_t(S) = 0$ (as $\rng_t(S) \ge 0$ by definition).
    Hence we assume $\spr_t(S) = 0$ and $\phi_t(S) > 0$. 

    To establish the lemma, we inductively prove the following:
    \begin{enumerate}[label=\textnormal{(\alph*)}]
        \item\label{clm: geom ineq phi 2, r>0} $r_t(S) > 0$,
        \item\label{clm: geom ineq phi 2, phi < w - f} $\phi_t(S) \le w_t(S) - \f_t(S)$,
        \item\label{clm: phi<r}
        $\phi_t(S)\le r_t(S)$.
    \end{enumerate}
    Observe that~\ref{clm: geom ineq phi 2, r>0} and \ref{clm: geom ineq phi 2, phi < w - f}, together with Claim~\ref{clm: w - f < r} imply~\ref{clm: phi<r} from which, in turn, the lemma follows.

    We study two cases according to the value of $\spr_{t-1}(S)$, the first of which is proved directly and serves also as the basis of the induction.

    \textbf{Case 1 \gray{-- Consolidation}}: $\spr_{t-1}(S) = 1$. Observe that $\ell_{t-1}(S) = 0$ by Claim~\ref{clm: consolidation implies simple}. Hence
    \[0 < \phi_{t}(S) = \phi_{t-1}(S) - |\Delta F_t(S)| = |B_{t-1}(S)| - |\Delta F_t(S)| = |B_{t-1}(S)| - |\FF_{t} \cap L_t(S)|,\]
    where the first equality is by~\eqref{eq: phi def}, the second -- by~\eqref{eq:simple phi}, and the last equality -- by Observation~\ref{obs: new ff after simple 2}.

    Clearly $\rng_{t-1}^0(B_{t-1}(S);S) = |B_{t-1}(S)|$, and as $F_t\cap B_{t-1}=\emptyset$,
    we also have $\rng_t^0(B_{t-1}(S);S) = |B_{t-1}(S)|$. Applying Claim~\ref{clm: faithful upwards reduction} with $\ell_1 = 0$, $\ell_2 = \ell_t(S)+1$, applied to $B_{t-1}(S) \subset S\times\{t-1\}$ at time-step $t$, together with the fact that $|\FF_{t} \cap L_t(S)| < |B_{t-1}(S)|$,
    we deduce that $\rng_t^{\ell_t(S)+1}(B_{t-1}(S);S) > 0$.
    Hence, there must exist a vertex of $S\times\{t\}$ from which starts a $t$-path ending in $S\times\{t+\ell_t(S)\}$.
    By~\eqref{eq: Bt def contains}, the leftmost such vertex will form $B_t$, so that indeed $B_t\ne\emptyset$ and $r_t(S) > 0$, proving \ref{clm: geom ineq phi 2, r>0}.
    
    Towards proving~\ref{clm: geom ineq phi 2, phi < w - f}, we partition $\Delta F_t(S)$ via
    \[|\Delta F_t(S)| = \f_t(S) + \left|\FF_t\left(L_t^{\ell_t(S)}\big([\min S, p_t(S)\big)\right)\right|.\]
    Hence, substituting $\phi_t(S) = \phi_{t-1}(S) - |\Delta F_t(S)|$ via \eqref{eq: phi def}, we must show \begin{equation}\label{eq: lemma proof, consolidation}
        |B_{t-1}(S)| = \phi_{t-1}(S) \le w_t(S) + |\Delta F_t(S)| - \f_t(S) = w_t(S) +  \left|\FF_t\left(L_t^{\ell_t(S)}\big([\min S, p_t(S)\big)\right)\right|.
    \end{equation}
    To see this, observe $|B_{t-1}([p_t(S),\max S])| \le |[p_t(S),\max S]|=w_t(S)$. 
    On the other hand, by definition we have $\rng_t^0(B_{t-1}([\min S, p_t(S)));S) = \big|B_{t-1}([\min S, p_t(S))\big|$, while by~\eqref{eq: Bt def contains} we have $\rng_t^{\ell_t(S)}([\min S, p_t(S));S) = 0$. From this we obtain  
    $|B_{t-1}([\min S, p_t(S)))| \le |\FF_t(L_t^{\ell_t(S)}([\min S, p_t(S)))|$, by
   applying Claim~\ref{clm: faithful upwards reduction} to $B_{t-1}([\min S, p_t(S)))$ at time-step $t$ with $\ell_1 = 0$ and $\ell_2 = \ell_t(S)+1$. All in all
    \[|B_{t-1}(S)| = |B_{t-1}([\min S, p_t(S)))| + |B_{t-1}([p_t(S),\max S])| \le w_t(S) +  \left|\FF_t\left(L_t^{\ell_t(S)}\big([\min S, p_t(S)\big)\right)\right|.\]
 
    \textbf{Case 2 \gray{-- Simulative step}}: $\spr_{t-1}(S) = 0$. We observe 
    \[0 < \phi_t(S) = \phi_{t-1}(S) - |\Delta F_t(S)| \le r_{t-1}(S) - |\Delta F_t(S)|,\]
    where the first equality is by~\eqref{eq: phi def} and the second inequality follows from the induction hypothesis at time-step $t-1$.
    By Observation~\ref{obs: delta f in slab} we thus have $|\FF_t(S\times[t +\ell_{t-1}(S),t+\ell_t(S)])| < \rng_{t-1}(S)=\rng_{t-1}^{\ell_{t-1}(S)}(S)$.
    Applying Claim~\ref{clm: faithful upwards reduction} with $A = S\times\{t-1\}$, $\ell_1 = \ell_{t-1}(S)$, and $\ell_2 = \ell_t(S)+1$, at time-step $t$ we deduce that $\rng_{t-1}^{\ell_t(S)+1}(S) > 0$, so that, as before, $r_t(S) > 0$ and $B_t(S) \ne \emptyset$, proving \ref{clm: geom ineq phi 2, r>0}.

    Next, we establish~\ref{clm: geom ineq phi 2, phi < w - f}. By the induction hypothesis at time-step $t-1$, we have $\phi_{t-1}(S) \le w_{t-1}(S) - \f_{t-1}(S)$, so that $\phi_t\le w_{t-1}(S) - \f_{t-1}(S) - |\Delta F_t(S)| $.
    Hence, it would suffice to show
    \[w_{t-1}(S) - \f_{t-1}(S) - |\Delta F_t(S)|  \le w_t(S) - \f_t(S),\]
    or, equivalently, \begin{equation}\label{eq: delta tilde f < delta w + delta f}
        \Delta \f_t(S) \le |\Delta F_t(S)|+\Delta w_t(S).  
    \end{equation}

Next, we bound $\Delta\f_t(S)$ through
    \begin{align}
    \begin{split}
\Delta\f_t(S)  &= \Big|\FF_t\big([p_t(S),\max S]\times [t+h_{t-1},t+h_t]\big)\Big|-
    \Big|\FF_t\big([p_{t-1}(S),p_{t}(S))\}\times [t,t+h_{t-1}]\big)\Big|\notag\\&\hspace{12.5pt}\phantom{\Big|\FF_t\big([p_t(S),\max S]\times [t+h_{t-1},t+h_t]\big)\Big|}-\Big|\FF_t\big([p_{t-1}(S),\max S])\times \{t-1\}\big)\Big|\notag
    \end{split}
    \notag \\    &\le    \Big|\FF_t\big([p_t(S),\max S]\times [t+h_{t-1},t+h_t]\big)\Big|-    \Big|\FF_t\big([p_{t-1}(S),p_{t}(S))\}\times [t,t+h_{t-1}]\big)\Big|\notag\\&\le \Delta F_t(S) - \Big|\FF_t\big([p_{t-1}(S),p_{t}(S))\}\times [t,t+h_t]\big)\Big|,
    \label{eq: bnd delta f tilde}
    \end{align}
    where the last inequality follows by Observation~\ref{obs: delta f in slab}.
    When $p_{t}(S)=p_{t-1}(S)$ this directly implies \eqref{eq: delta tilde f < delta w + delta f} as in this case $\Delta w_t(S)=0$. When $p_{t}(S)=p_{t-1}(S)+1$, which implies $\Delta w_t(S)\ge 1$, we obtain \eqref{eq: delta tilde f < delta w + delta f} 
    by observing $p_{t}(S)>p_{t-1}$ implies $\FF_t\big([p_{t-1}(S),p_{t}(S))\}\times [t,t+h_t]\big)\neq \emptyset$.
    
    We are thus left with the case $p_{t}(s)>p_{t-1}(S)+1$. For this to be the case,  $t\in\tms_2$ must be a doubling time, and, denoting $S_1 < S_2$ for the segments in $\seg_{t-1}$ satisfying $\anc_{t-1}(S) = \{S_1,S_2\}$, we must have $p_t(S) \in p_{t-1}(S_2) + \{0,1\}$ and $B_{t-1}(S_1) \ne \emptyset$.
    Observe that in this case $\f_t(S) = \f_t(S_2)$ and $w_t(S) = w_{t}(S_2)$, and 
    \begin{equation}\label{f(S) > f(S_1) + f(S_2)}
        \f_{t-1}(S) \ge \f_{t-1}(S_1) + \f_{t-1}(S_2).
    \end{equation}
    

    Hence, for $i\in\{1,2\}$, writing 
    $\Delta F_t(S_i)=\Big|\FF_t\big(S_i\times [t+h_{t-1},t+h_{t}]\big)\Big|$, so that $\Delta F_t(S) = \Delta F_t(S_1) + \Delta F_t(S_2)$, we obtain
    \begin{align*}\Delta\f_t(S) + \f_{t-1}(S_1)&\le\f_t(S) - \f_{t-1}(S_2)=\Delta \f_t(S_2)\\
    &\le \Delta F_t(S_2) - \Big|\FF_t\big([p_{t-1}(S_2),p_{t}(S_2))\}\times [t,t+h_t]\big)\Big|\\&\le \Delta F_t(S_2) + \Delta w_t(S_2)=
    \Delta F_t(S_2) + w_t(S)-w_{t-1}(S_2)\\&=|\Delta F_t(S)|-|\Delta F_t(S_1)| + \Delta w_t(S)+w_{t-1}(S_1),
    \end{align*}
    where the first inequality is by~\eqref{f(S) > f(S_1) + f(S_2)},
    the second follows by applying~\eqref{eq: bnd delta f tilde} to $S_2$,
    and the third uses the arguments that follows~\eqref{eq: bnd delta f tilde}, namely that 
    $p_t(S)=p_{t-1}(S_2)+1$ implies 
    $\FF_t\big(\{p_{t-1}(S_2)\}\times [t,t+h_t]\big)\neq \emptyset$.
    
    Hence, to obtain~\eqref{eq: delta tilde f < delta w + delta f} it would suffice to show  
    \begin{equation} w_{t-1}(S_1)\le |\Delta F_t(S_1)|+\f_{t-1}(S_1).  \label{eq: w t-1 bound}
    \end{equation} 
    If $B_{t-1}(S_1)=\emptyset$, this is straightforward as $w_{t-1}(S_1)= 0$. Otherwise, apply Claim~\ref{clm: faithful upwards reduction} with $A=\{(p_{t-1}(S_1),t-1)\}\subset S_1\times\{t-1\}$, $\ell_1 = h_{t-1}$, and $\ell_2 = h_t+1$ at time-step $t$, observing that $\rng_t^{h_t+1}(S_1\times\{t-1\};S) = 0$ by~\eqref{eq: Bt def contains} (together with $p_t(S) > p_{t-1}(S_1) + 1$), to obtain 
    \begin{equation}\label{eq: r(S_1) <= F(S_1)}
        \rng_{t-1}(S_1) \le |\Delta F_t(S_1)|   
    \end{equation}
    By our assumption that $B_{t-1}(S_1) \ne \emptyset$, and as $\spr_{t-1}(S_1) = 0$,
    we conclude that $\rng_{t-1}(S_1) > 0$. Hence, Claim~\ref{clm: w - f < r} applied to $S_1$ at time-step $t-1$, together with~\eqref{eq: r(S_1) <= F(S_1)}, yields~\eqref{eq: w t-1 bound}.

\end{proof}

{
}

\section{The Directed Half-Plane}\label{section: directed half plane}

This section is dedicated to the study of the Containment Game on the \emph{directed half-plane} $\Zhp=(V,E)$, the sub-graph of $\ZZ$ given by
\[V = \{(x,y)\in \Z^2 : y \ge 0\},\,\,\, E = \{\left((x,y), (x+i,y+1)\right) : |x| \le y, i\in\{-1,0,1\}\}.\]
This will serve as a major stepping stone in handling the full $\ZZ$.

Recall Spreader's strategy for this graph given in \eqref{eq: Bt def directed half plane}.
Firstly we observe the following 
analogue of Proposition~\ref{prop: Phi grows 1}, which it readily implies when combined with Proposition~\ref{prop: mu not ex nihilo simple}\ref{prop: mu not ex nihilo simple: itm fogarty}.

\begin{prop}\label{prop: Phi grows by 2, quarter}
Let $s \le t$. If $B_{s-1} \ne \emptyset$ then $\Delta\phi_s(\seg_t) + |\Delta F_s(\seg_t)| + \Delta d_s(\seg_t) \ge 2$.
\end{prop}

We also slightly augment our analytic results, in order to establish the following analogues of 
Claim~\ref{obs: dead ends Zei}
and
Proposition~\ref{prop: B(t) is virtually-viable}, which we establish in Section~\ref{subsection: devirtualisation}

\begin{prop}\label{prop: devirt quarter pl} There exists an explicit Spreader strategy $B' =(B'_t)_{t\in\N}$ satisfying the following for every $t\in\N$. 
\begin{enumerate}[label=\textnormal{(\alph*)}]
    \item\label{prop: devirt quarter pl, viable} $|B'_t| \le O(h_{t}^6) + \frac{2t}{h_{t}} + |B_0|$.
    \item\label{prop: devirt quarter pl, fire-preserving} $B'_t \ne \emptyset \iff B_t \ne \emptyset$.
\end{enumerate}
\end{prop}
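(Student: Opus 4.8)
The plan is to mimic the dead‑end‑avoiding construction of Section~\ref{subsection: paths}, enlarging the set of retained vertices so as to absorb the northwestward expansion of the leftmost occupied vertex. For a fixed Container, run the inefficient strategy $B$ alongside, feeding it the same deletions; write $x_t := \min B_t$ for the leftmost occupied vertex at time $t$ (whenever $B_t \ne \emptyset$), and set
\[B'_t := B_t^{3H_t}\ \cup\ N_t, \qquad N_t := \{\,v \in B_t\ :\ 0 \le \mathrm{col}(v) - \mathrm{col}(x_t) \le C_\str H_t\,\},\]
where $C_\str$ depends only on $\str$ and is chosen large enough for the drift estimates below. Here $B_t^{3H_t}$ is the ``bulk'', playing exactly the role it did on $\Zei$ — and since the northeastward spread is one‑sided it needs no separate treatment — while $N_t$ is a ``left reservoir'' which always contains $x_t$ and absorbs the single extra northwest child occupied each turn.

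Part~\ref{prop: devirt quarter pl, viable} follows as in Proposition~\ref{prop: B(t) is virtually-viable}: partition $B_t^{3H_t}$ according to the value of $\nsim_t(S_t(\cdot))$. The spreading part is $O(h_t^6)$ by Proposition~\ref{prop: emergency is bounded}, which remains valid on $\Zhp$ as noted after~\eqref{eq: firefighters def Zhp}; the simulative part contributes at most one vertex per occupied simulative segment, and since the occupied columns form a region widening by at most two per turn (one northwest, one northeast) starting from an interval of width $\le |B_0|$, there are at most $\tfrac{2t}{h_t} + |B_0| + O(1)$ such segments. Finally $|N_t| \le C_\str H_t + 1 = O(h_t^2)$. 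Summing the three terms gives the claimed bound.

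Granting that $B'$ is a legitimate Spreader strategy, part~\ref{prop: devirt quarter pl, fire-preserving} is immediate: $B'_t \subseteq B_t$ gives ``$B_t = \emptyset \Rightarrow B'_t = \emptyset$'', while $x_t \in N_t \subseteq B'_t$ gives ``$B_t \ne \emptyset \Rightarrow B'_t \ne \emptyset$''. So the real content is to verify that $B'_t$ is reachable from $B'_{t-1}$, i.e. that every vertex of $B'_t$ is adjacent in $G_t$ to a vertex of $B'_{t-1}$. If $v \in B_t^{3H_t}$, then the edge through which $v$ was occupied in the $B$‑evolution is either a north or northeast step, in which case prepending it to a $(t,3H_t)$‑path of $v$ exhibits the corresponding parent in $B_{t-1}^{3H_t+1} \subseteq B_{t-1}^{3H_{t-1}} \subseteq B'_{t-1}$ (using $H_{t-1} \le H_t$ for the truncation), or else it is the unique northwest step, whose parent is $x_{t-1} \in N_{t-1} \subseteq B'_{t-1}$. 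If $v \in N_t$, then either $v = x_t$ is the northwest child of $x_{t-1}$ and we conclude as before, or $v$ has a north/northeast parent $u$ with $\mathrm{col}(u)$ within $C_\str H_t + 1$ of $\mathrm{col}(x_t)$; then either $u \in N_{t-1}$, or $u$ lies far enough inside the occupied region that — using Proposition~\ref{prop: simulated path} together with the structural fact that near each end the strategy maintains a dense bulk of spreading segments of width $\asymp H_t$ — $u$ admits a one‑sided $(t-1,3H_{t-1})$‑path toward the interior, and hence $u \in B_{t-1}^{3H_{t-1}} \subseteq B'_{t-1}$.

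The main obstacle is this last reachability step for $N_t$, and specifically the two effects that spoil the naive nesting of $N_t$ inside $N_{t-1}$: the leftmost column $\mathrm{col}(x_t)$ can drift rightward by up to $\str$ per turn, and at a doubling time $H_t$ quadruples. The former is handled by allowing $C_\str$ enough slack and transferring path counts across the slack region via Observation~\ref{obs: pathcol mono time}; the latter is handled using the isolation of doubling times from~\eqref{eq: doubling condition}, which guarantees that no doubling occurred within the last $\asymp H_t$ turns, so the left bulk is already wide enough at the moment a doubling takes place. As on $\Zei$, the amendment only ever discards dead‑end vertices — those with no $(t,3H_t)$‑path and not near the left end — which cannot affect whether the fire survives, so none of the structural propositions of Section~\ref{subsection: directed half plane analysis} is disturbed by passing from $B$ to $B'$.
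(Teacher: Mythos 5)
There is a genuine gap, and it sits exactly where you place the ``main obstacle'': the left reservoir $N_t$, a fixed column-window of width $C_\str H_t$ around the current leftmost occupied vertex, does not capture the vertices that may become (ancestors of) the leftmost vertex within the next $3H_t$ turns. Your implicit premise that $\min B_t$ drifts rightward by at most $O(\str)$ per turn fails: on $\Zhp$ only the global leftmost vertex ever expands northwest, so a cavity carved by Container between a narrow leftmost cluster and the main body persists, and Container can widen it by one column per turn at the cost of a single deletion (delete the north child of the left edge of the right portion) while keeping the leftmost cluster thin. After arbitrarily many turns the gap can exceed $C_\str H_t$ for any constant $C_\str$ (note $H_t$ grows only polynomially in $t$); Container then kills the thin leftmost cluster in $O(1)$ moves and $\min B_t$ jumps by more than the window width in a single step. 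The vertex $u$ that takes over as leftmost was, at the earlier time, outside $N_t$, and it need not lie in $B_t^{3H_t}$ either: vertices at the left edge of the main body sit in spreading segments whose one-sided upward paths Container may long ago have blocked (with deletions placed more than $h$ above the front, hence never triggering disruptions), and such a vertex can nonetheless be essential because once it becomes $\min B_s$ it survives via northwest expansion. Proposition~\ref{prop: simulated path} does not rescue this step: it lower-bounds the \emph{number} of occupied vertices to the left of a simulative segment, and says nothing about any particular near-edge vertex admitting a one-sided $(t,3H_t)$-path. So both your reachability induction and part~\ref{prop: devirt quarter pl, fire-preserving} break in this scenario.

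The paper's proof differs precisely in this ingredient: instead of a metric window around $\min B_t$, it defines $D_t^{\ell}$ \emph{adversarially} as the set of $x\in B_t$ for which some Container strategy makes $x=\origin^\ell_t(\min B_{t+\ell})$, sets $B'_t := B_t^{\cR H_t}\cup\bigl((B'_{t-1}+\{(-1,1),(0,1),(1,1)\})\cap\bigcup_{\ell\le \cR H_t}D_t^\ell\bigr)$ (so validity holds by construction), and proves by induction that $\min B_t\in B'_t$ by following the minimal two-sided path backwards. The quantitative heart, which your construction has no substitute for, is the bound $|D_t^\ell|=O(\ell^2)$: if $x_N$ is the rightmost candidate, the Container strategy witnessing $x_N\in D_t^\ell$ must block the minimal two-sided $(t,\ell)$-paths of essentially all candidates to its left, and since at most $2\ell-1$ such paths meet at a point, $\str\ell$ deletions can block only $O(\str\ell^2)$ of them. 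This is what makes the reservoir both large enough to contain every potential future leftmost origin and small enough ($O(H_t^3)=O(h_t^6)$ after summing over $\ell$) for part~\ref{prop: devirt quarter pl, viable}. To repair your argument you would need either this adversarial definition or a proof that the leftmost column cannot jump by more than $O(H_t)$ within $3H_t$ turns, and the latter is false.
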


Finally, as the directed half-plane will serve us as a stepping stone towards treating $\ZZ$ and our strategy there will involve playing in parallel up to four copies of $\Zhp$ games, we must make sure that these don't step on each others' toes. To this end we define the notion of play area.

We define the \emph{play area} of a game $\GG$ at time-step $t$ by
\begin{equation}\label{eq: play area}\area^{\GG}_t := \bigcup_{s=0}^t B_{s} \cup F_{s}.\end{equation}
Namely, all the vertices that were ever occupied together with all deleted vertices counted by $F_s$ up to time-step $t$. Here we often omit the superscript $\GG$ as only a single game is considered. The full notation will be used only in Section~\ref{section: plane}.
For the analysis of the play area, we introduce the following notion. Letting $D\subset \Z^2$, we define the \emph{infinite trapezoid generated by $D$}, denoted by $\trap(D)$, to be the convex hull in $\Z^2$ of
$D+\{k(1,1),k(-1,1)  :\ k\in \N\}$.
The following proposition, established in Section~\ref{subsection: bounding ga}, bounds $\area^\GG_t$.
\begin{prop}\label{prop: ga prop}
The following hold for all $t\in \N$.
\begin{enumerate}[label=\textnormal{(\alph*)}]
     \item\label{prop: ga prop, item: dead front} If $B_t = \emptyset$
     then $\area_t \subset \Z\times [0,t]$,
    \item\label{prop: ga prop, item: future ga} For every $t' > t$ we have $\area_{t'} \setminus \area_{t} \subseteq \trap(B_t)$,
    \item\label{prop: ga prop, past rectangle}  $\area_t\!\subseteq\! \left[x_{\min}\!-\!1, x_{\max} \!+\!1\right]\!\times\!\Z$, where $x_{\min}$ and $x_{\max}$ are the columns of  $\min \bigcup_{s=0}^{t-1} B_s$ and $\max\bigcup_{s=0}^{t-1} B_s$, respectively. 
\end{enumerate}
\end{prop}
{
}

\subsection{Avoiding dead-ends -- proof of Proposition~\ref{prop: devirt quarter pl}}\label{subsection: devirtualisation}

When constructing the modified strategy $B'$, we aim to ensure that $B'_t$ always contains $\min B_t$, the leftmost occupied vertex, by including any vertex that could take this role within a bounded number of turns. We proceed to define this formally.

As in the proof of Proposition~\ref{prop: B(t) is virtually-viable} in Section~\ref{subsection: sparsity}, we define $\origin_s(y)$ for $s\le t$, as the
leftmost starting vertex of a $s$-path ending in a given $y\in B_t$. Denote $D_t^{\ell}$ for the set of vertices $x\in B_t$ for which there exists a Container strategy under which $x=\origin_t(\min B_{t+\ell})$ when the strategy $B$ is played against it.

Define $B' = (B'_t)_{t\in\N}$ inductively as follows. 
\begin{align}
    B'_0 &= B_0,\notag\\
    B'_t &= B_t^{\cR H_t} \cup \left((B'_{t-1}+\{(-1,1),(0,1),(1,1)\}) \cap \bigcup_{\ell=0}^{\cR H_t}D_t^\ell\right).\label{eq: def Ct}
\end{align}

\begin{proof}[Proof of Proposition~\ref{prop: devirt quarter pl}]
By \eqref{eq: def Ct} $B'_t$ is clearly a viable strategy.
Let $t\in\N$. Firstly we show that $\min B_t \in B'_t$ which implies $B'_t\ne \emptyset \iff B_t \ne \emptyset$ (recalling  that $D^{\cR H_t}_t \subseteq B_t$). This we show using induction. Since $B'_0 = B_0$, we clearly have $\min B_0 \in B'_0$.
Assume this holds up to time-step $t-1$, and let $y := \min B_{t}$. By~\eqref{eq: def Ct} and the definition of $D_t^\ell$, it suffices to show that $\origin_s(y) \in B'_s$ for some $s\in [t-3H_t,t]$.
Indeed, let $P = (x_s,\dots,x_t = y)$ be an $s$-path of maximum length leading to $y$, where $x_i \in B_i$ for $i=s,\dots,t$. If $|P| \ge 3H_t$, then $\origin_{t-3H_t}(y) = x_{t-3H_t} \in B_{t-3H_t}^{3H_t} \subset B'_{t-3H_t}$, and we are done. Otherwise, either $x_s\in B_0$, or by~\eqref{eq: Bt def directed half plane}, we must have $\origin_{s}(y)+(1,-1) = \min B_{s-1}$, so that
$\origin_s(y) = x_s = \min B_s$. Hence, by the induction hypothesis, $x_s \in B'_s$, and again, we are done.


Finally, we establish the bound on $|B'_t|$.
Let $\ell\in[0,\cR H_t]$ and denote the points of $D^\ell_t$ by $x_1<\dots<x_{N}$.
We denote by $P_i$ a $t$-path of length $\ell$ starting from $x_i$ and ending in the minimal $y_i$, such that there exists a Container strategy under which $y_i=\min B_{t+\ell}$. Observe that, necessarily $y_i\le y_{i+1}$ for all $i$.

By Observation~\ref{obs: far paths no intersection}, at most $\ell-1$ distinct two-sided $(t,\ell)$-paths can intersect at a point. Hence we must have $y_i < y_N$ for every $i < N-\ell$. Since $x_N\in D_t^\ell$, Container, which deletes up to $q+1$ vertices every turn, must have a strategy which blocks $P_i$, for every $i\in 1,\dots,N-1$. Observing that $P_i \cap \FF_{t+\ell}=P_i \cap (\FF_{t+\ell}\setminus \FF_{t})$, we have 
\[|\{i: P_{i} \cap \FF_{t+\ell}\neq \emptyset \}| \le  \ell^2(\str+1).\]
We deduce that $N - \ell \le \ell^2(\str+1)$, so that $|D_t^\ell| = N \le \ell^2(\str+1) + \ell = O(\ell^2).$
Taking a union bound we obtain,
\[\left|\bigcup_{\ell = 0}^{\cR H_t} D_t^\ell\right| = O(H_t^3) = O(h_t^6).\]

Applying Proposition~\ref{prop: B(t) is virtually-viable} we obtain
\[|B_t'| \le \left|B_t^{3H_t}\right| + \left|\bigcup_{\ell = 0}^{\cR H_t} D_t^\ell\right| = O(h_t^6) + \frac{t}{h_t}.\qedhere\]
\end{proof}

\subsection{Bounding the play area -- proof of Proposition~\ref{prop: ga prop}}\label{subsection: bounding ga}
Let $t\in\N$.
To prove~\ref{prop: ga prop, item: dead front}, assume that $B_t = \emptyset$. We claim that $\ell_{t'}(S) = 0$ for every $t'\in[t-h_t,t]\cap \N$ and $S\in\seg_{t}$.
Indeed, as $b_t=0$, Proposition~\ref{prop: simulated path} implies that $\spr_{s}(S) = 1$ for every $s\in [t-2h_t,t]\cap \N$, and so $\ell_{t'}(S) = 0$ for every $t'\in [t-h_t,t]\cap \N$ by~\eqref{eq: look-ahead-def}.
By definition, $F_{s}(S)\subset S\times (-\infty, s+\ell_{s}(S)]$. Since $\ell_s(S)\le h_s$ for all $s$, item~\ref{prop: ga prop, item: dead front} follows.

To prove~\ref{prop: ga prop, item: future ga}, note that for every $n\in\N$ for which $B_{t+n}\neq \emptyset$, the interval connecting $\min B_t+(-n,n)$ and $\max B_t+(n,n)$ must contain $B_{t+n}$.
Next, let $n\ge 1, S\in\seg_{t+n}$ and $v\in \Delta F_{t+n}(S)$. Clearly $v \in S \times \{t+n,...,t+n+h_{t+n}\}$. Hence if $S$ is neither the leftmost occupied segment nor the rightmost, we obtain $v \in \trap(B_t)$. Otherwise, $S$ must be simple (by Proposition~\ref{cor: small fire -> simple and debt free}), so that $v\in B_{t+n-1}+\{(-1,1),(0,1),(1,1)\})$ by~\eqref{eq: firefighters def}, and hence $v\in \trap(B_t)$.

To prove~\ref{prop: ga prop, past rectangle}, denote $x_{\min}^s, x_{\max}^s$ for the first coordinate of $\min B_s$ and $\max B_s$, respectively.
Note that a deleted vertex counted towards $F$ at time-step $s$ outside the rectangle $[x^{s-1}_{\min}, x^{s-1}_{\max} ]\times \Z$
must have been counted by either the leftmost occupied segment or the rightmost, which are always simple (again, by Proposition~\ref{cor: small fire -> simple and debt free}). Thus, by~\eqref{eq: firefighters def}, it is of distance at most $1$ from the rectangle. As this holds for any $s \le t$, item~\ref{prop: ga prop, past rectangle} follows.
\qed

\section{The Plane}\label{section: plane}

\subsection{Winning in the plane}\label{subsection: plane}
In this section we construct a winning Spreader strategy in $\ZZ$ from the $\Zhp$ strategy described in Section~\ref{section: directed half plane}. Given the $\Zhp$ results, the proof of Theorem~\ref{thm: sub-linear win} will closely follow the firefighter analogue of~\cite{feldheim20133}*{Theorem~1}, showing that at least three fronts contain occupied vertices at all times.

For simplicity, we assume the game starts at time-step $t=1$ with $B_1 = \{-1,0,1\}^2\setminus\{(0,0)\}$ and $\FF_1=\emptyset$ (See Figure~\ref{fig:plane, initial game}), and that on every subsequent turn Container gets to delete at most $3$ vertices. A more detailed computation would show that Spreader can win also with the occupied set initialised at a single occupied vertex at the origin, but this is not needed for our result and incurs technical complications.

\begin{figure}[!ht]
    \centering
    \subfloat[\label{fig:plane, initial game}]{
    \begin{tikzpicture}[scale=\sca]
        \draw[step=1cm,grayX,thin] (0,0) grid (7,7);
        
        \fill [pattern=my north east lines,pattern color=lightgrayX] (1,6) rectangle (7,7);
        \fill [pattern=my north east lines,pattern color=lightgrayX] (2,5) rectangle (6,6);
        \fill [pattern=my north east lines,pattern color=lightgrayX] (3,4) rectangle (5,5);
        
        \draw[black, very thick,fill=none] (3,4) rectangle (5,5); 
        \draw[black, very thick,fill=none] (4,2) rectangle (5,4); 
        \draw[black, very thick,fill=none] (2,2) rectangle (4,3); 
        \draw[black, very thick,fill=none] (2,3) rectangle (3,5); 

        \node at (3.5,4.5) {\Fire}; 
        \node at (4.5,4.5) {\Fire};
        \node at (4.5,2.5) {\Fire}; 
        \node at (4.5,3.5) {\Fire};
        \node at (2.5,2.5) {\Fire}; 
        \node at (3.5,2.5) {\Fire};
        \node at (2.5,3.5) {\Fire}; 
        \node at (2.5,4.5) {\Fire};
        
        \draw [dashed, line width=0.3mm] (5,5) -- (6,5);
        \draw [dashed, line width=0.3mm] (6,5) -- (6,6);
        \draw [dashed, line width=0.3mm] (6,6) -- (7,6);
        \draw [dashed, line width=0.3mm] (7,6) -- (7,7);
        
        \draw [dashed, line width=0.3mm] (2,5) -- (2,6);
        \draw [dashed, line width=0.3mm] (2,6) -- (1,6);
        \draw [dashed, line width=0.3mm] (1,6) -- (1,7);
        \draw [dashed, line width=0.3mm] (1,7) -- (0,7);
        
        \draw [dashed, line width=0.3mm] (2,2) -- (1,2);
        \draw [dashed, line width=0.3mm] (1,2) -- (1,1);
        \draw [dashed, line width=0.3mm] (1,1) -- (0,1);
        \draw [dashed, line width=0.3mm] (0,1) -- (0,0);

        \draw [dashed, line width=0.3mm] (5,2) -- (5,1);
        \draw [dashed, line width=0.3mm] (5,1) -- (6,1);
        \draw [dashed, line width=0.3mm] (6,1) -- (6,0);
        \draw [dashed, line width=0.3mm] (6,0) -- (7,0);

    \end{tikzpicture}}
    \hspace{8em}
    \subfloat[\label{fig:plane, spilling}]{
    \begin{tikzpicture}[scale=\sca]
    
        \draw[step=1cm,grayX,thin] (0,0) grid (7,7);
        
        \fill [pattern=my north east lines,pattern color=lightgrayX] (0,6) rectangle (7,7);
        \fill [pattern=my north east lines,pattern color=lightgrayX] (0,5) rectangle (6,6);
        \fill [pattern=my north east lines,pattern color=lightgrayX] (1,4) rectangle (5,5);
        \fill [pattern=my north east lines,pattern color=lightgrayX] (2,3) rectangle (4,4);
        \draw[darkgrayX, very thick,fill=none] (2,3) rectangle (4,4); 
        
        \node at (0.5,2.5) {\faShield};
        \node at (1.5,2.5) {\faShield};
        \node at (2.5,2.5) {\faShield};
        
        \node at (3.5,2.5) {\Fire};
        \node at (3.5,1.5) {\Fire};
        \node at (3.5,0.5) {\Fire};
        
        \draw[-stealth, thick, black, fill=none] (3.3,2.7) -- (3.3, 3.2);
        \draw[-stealth, thick, black, fill=none] (3.3,2.7) -- (2.8, 3.2);

        \node[fill opacity=0.5] at (2.5,3.5) {\Fire};
        \node[fill opacity=0.5] at (3.5,3.5) {\Fire};
        
        \draw [dashed, line width=0.3mm] (4,4) -- (5,4);
        \draw [dashed, line width=0.3mm] (5,4) -- (5,5);
        \draw [dashed, line width=0.3mm] (5,5) -- (6,5);
        \draw [dashed, line width=0.3mm] (6,5) -- (6,6);
        \draw [dashed, line width=0.3mm] (6,6) -- (7,6);
        \draw [dashed, line width=0.3mm] (7,6) -- (7,7);
        
        \draw [dashed, line width=0.3mm] (2,4) -- (1,4);
        \draw [dashed, line width=0.3mm] (1,4) -- (1,5);
        \draw [dashed, line width=0.3mm] (1,5) -- (0,5);

        

    \end{tikzpicture}}
    
    \caption{\eqref{fig:plane, initial game} illustrates the initial game in the plane. The four fronts are outlined by rectangles, and the four disjoint infinite trapezoids are outlined by dashed lines. The infinite trapezoid of front $0$ is highlighted by a filling pattern.
    In~\eqref{fig:plane, spilling} front $0$ is re-ignited by front $1$. The initial occupied set of the re-initialised game at front $0$ is depicted in gray, and the infinite trapezoid is highlighted by a filling pattern.
    }
    \label{fig:plane}
\end{figure}
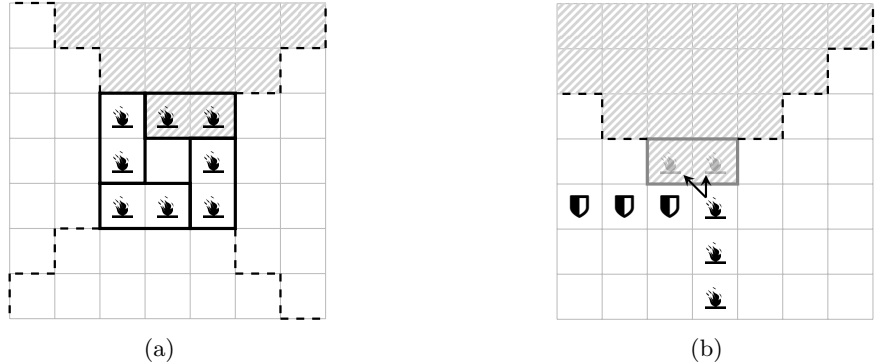

\subsubsection{The strategy}\label{subsubsection: plane strategy}
The strategy of Spreader in the plane consists of two components. The first is a simultaneous implementation of a rotated and translated directed half-plane strategy in up to four disjoint play areas, each corresponding to an occupied front. The second component is the re-ignition of an extinguished front by an adjacent occupied front. We proceed to write this formally.

Writing $\II:= \{0,1,2,3\}$, we denote the four cardinal directions $\{\theta^i\}_{i\in \II}$ by
\[\theta^0 := (0,1),\quad \theta^1 := (1,0),\quad \theta^2 := (0,-1),\quad \theta^3 := (-1,0),\]
in a clockwise fashion. Superscripts in $\II$ are always taken modulo 4; in particular we write $|i-j| = 1$ if $i$ and $j$ are consecutive modulo $4$. We also denote the four secondary directions $\{\theta^{i,i+1}\}_{i\in\II}$
by
\[\theta^{0,1} := (1,1),\quad \theta^{1,2} := (1,-1),\quad \theta^{2,3} := (-1,-1),\quad \theta^{3,0} := (-1,1).\]
The infinite line of distance $d$ from the origin, perpendicular to the $i$-th direction, is denoted by
\[L^i(d) := d\theta^i + \Z\theta^{i+1}.\]
We also introduce a notion of directed \emph{occupation radius} (denoted by $\rad^i_{t}$) and \emph{front-line}  (denoted by $L^i_t$), keeping track of the distance of the front from the origin in every cardinal direction. These are defined as follows.
\begin{align}
    \rad^i_{t} &:= \min\left\{r\ :\ L^i(r) \cap \bigcup_{t'=0}^{t-1}B_{t'}=\emptyset\right\} \label{eq: fire-rad-def},\\
    L^i_t &:= L^i(\rad^i_t).\notag
\end{align}

Throughout we denote by $\GG^i_t$ the half-plane game played in direction $i$ at time-step $t$ employed by the plane strategy. We write $B^i_t := B_t(\seg^{\GG^i_t}_{t})$ and use a similar convention for $\phi,d,\Delta f$ and $b$ to denote the corresponding sets and quantities used for the analysis of the game $\GG^i_t$. Observing that, by Proposition~\ref{prop: no fire -> no ptl no dbt}, on a termination time-step $t$, $B^i_t=\emptyset$ and all of the remaining quantities are $0$, we set these as default values for all times $s$ in which no game is played on the $i$-th front.
Once a front game is \emph{initialised}, it plays until $B^i_t=\emptyset$ at which step it \emph{terminates}.
We use the equality $\GG^i_t=\GG^i_s$ to indicate that the game played at time-step $t$ didn't terminate before time-step $s$. For each front $i\in\II$, the fact that $\GG^i_t$ is \emph{active} (rather than terminated) is indicated by $a^i_t := \ind\{B^i_t \neq \emptyset\}$.
We also set $B_t := \bigcup_{i\in\II} B^i_t$.

The $\ZZ$ game starts in time-step $1$, where the half-plane games are initialised with $B^{\GG^i}_1:=\{\theta^i, \theta^{i,i+1}\}\setminus \FF_1$ (See Figure~\ref{fig:plane, initial game}). In every turn $t > 1$, the strategy plays as follows.
\begin{enumerate}
    \item If $a^i_{t-1} = 1$, a step is played in the $i$-th front game, according to the $\Zhp$ strategy on the corresponding shifted and rotated piece of $\ZZ$.
    \item If $a^i_{t-1} = 0$ then, 
    \begin{itemize}
        \item in case that $B_{t-1} \cap L^i_{t-1} = \emptyset$, nothing happens in this front.
        \item Otherwise, when $B_{t-1} \cap L^i_{t-1} \neq \emptyset$, a new $\Zhp$ game $\GG$ is initialised, rotated towards direction $i$ and shifted by $\rho^i_{t}$, with starting conditions $B_0 := O\setminus \FF_t$,  $\FF_0 := \FF_t$  and $F_0 := O\cap \FF_t$, where
    \begin{equation}\label{eq: init}O := L^i_{t} \cap \left((B^{i+1}_{t-1} + \{\theta^i, \theta^{i-1,i}\}) \cup (B^{i-1}_{t-1} + \{\theta^i, \theta^{i,i+1}\})  \right) .
    \end{equation}
    \end{itemize}
\end{enumerate}
Namely, a terminated game can be re-initialised when the front $L^i_{t-1}$ contains a vertex occupied by Spreader as part of the game played on an adjacent front. In this case $\Delta\rho^i_t=1$, and the initial play area consists of all of the neighbours of this occupied vertex on $L^i_{t-1}+\theta_i$ which are not in the play area of the adjacent front's game (see Figure~\ref{fig:plane, spilling}). Note that this new game may terminate immediately upon creation, but at a given front, a game cannot be initialised on the same turn that another game terminates.

Observe that $\rho^i_1=1$, $\phi^i_1=|B^i_1|=2$, $F^i_1=\emptyset$  and $|F^t|\le 3(t-1)$ for all $i\in \II$.


\subsubsection{Analysis of the strategy}

We start by making the following claim, concerning rotated infinite trapezoids. Given a set $S \subset \Z^2$ we denote the infinite trapezoid of $S$ rotated to direction $i$ by $\trap_i(S)$. This is given by the convex hull of $S+\{k\theta^{i,i+1},k\theta^{i,i-1}  :\ k\in \N\}$. We omit the subscript $i$ when it is clear from context. 
\begin{clm}\label{clm: disjoint trap}
Let $a_1 < a_2 < a_3, b_1 < b_2$, and define $C:= [a_1,a_2] \times \{b_2\}, D := \{a_3\} \times [b_1,b_2]$. Then $\trap_0(C) \cap \trap_1(D) = \emptyset$.
\end{clm}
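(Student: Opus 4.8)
The claim is a purely geometric statement about two infinite trapezoids in $\Z^2$, one opening upwards (direction $\theta^0=(0,1)$) from the horizontal segment $C=[a_1,a_2]\times\{b_2\}$, and one opening to the right (direction $\theta^1=(1,0)$) from the vertical segment $D=\{a_3\}\times[b_1,b_2]$. My plan is to describe each trapezoid by explicit linear inequalities and then show the inequality systems are jointly infeasible, using the hypotheses $a_2 < a_3$ and $b_1 < b_2$.

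First I would unwind the definitions. For $\trap_0(C)$, the relevant secondary directions are $\theta^{0,1}=(1,1)$ and $\theta^{3,0}=(-1,1)$, so $\trap_0(C)$ is the convex hull of $C + \{k(1,1),\,k(-1,1) : k\in\N\}$. A point $(x,y)$ lies in this set iff $y\ge b_2$ and the horizontal coordinate stays within the outward-fanning bounds, i.e. $a_1-(y-b_2) \le x \le a_2+(y-b_2)$; equivalently $x - y \le a_2 - b_2$ and $-x-y\le -a_1-b_2$, together with $y\ge b_2$. Similarly, for $\trap_1(D)$ the secondary directions are $\theta^{1,2}=(1,-1)$ and $\theta^{0,1}=(1,1)$, so $\trap_1(D)$ is the convex hull of $D+\{k(1,-1),k(1,1):k\in\N\}$, and a point $(x,y)$ lies in it iff $x\ge a_3$ and $b_1-(x-a_3)\le y\le b_2+(x-a_3)$.

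Now suppose for contradiction that $(x,y)\in\trap_0(C)\cap\trap_1(D)$. From membership in $\trap_1(D)$ we have $x\ge a_3$, hence $x > a_2$ since $a_3 > a_2$; write $x = a_2 + s$ with $s>0$, and in fact $s \ge a_3 - a_2 > 0$ (integrality is not even needed here). From membership in $\trap_0(C)$ applied with this $x$: we need $x \le a_2 + (y-b_2)$, i.e. $y \ge b_2 + s > b_2$. On the other hand, membership in $\trap_1(D)$ forces $y \le b_2 + (x - a_3) = b_2 + (a_2 + s - a_3) = b_2 - (a_3 - a_2) + s < b_2 + s$. Combining, $b_2 + s \le y \le b_2 - (a_3-a_2) + s$, which gives $0 \le -(a_3 - a_2)$, contradicting $a_3 > a_2$. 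Hence the intersection is empty. (The hypothesis $b_1 < b_2$ is not actually needed for emptiness; it is presumably included because $C$ and $D$ arise with that configuration in the application, or to ensure $D$ is a genuine segment below the row of $C$.)

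The only real work is bookkeeping: getting the inequality descriptions of the two rotated trapezoids right from the definition of $\trap$ and the direction conventions $\theta^i,\theta^{i,i+1}$, and being careful that $\trap$ is the convex hull in $\Z^2$ (so one should note that the linear-inequality description of the convex hull of $C+\{k(1,1),k(-1,1):k\in\N\}$ restricted to $\Z^2$ is exactly the lattice points satisfying those inequalities — this is immediate since the generators $(1,1),(-1,1)$ and the endpoints of $C$ are all integral, so the continuous convex hull intersected with $\Z^2$ has the stated description). I expect this translation step to be the main (though minor) obstacle; once the inequalities are written down, the contradiction is a two-line computation as above. I would therefore structure the written proof as: (1) state the inequality description of $\trap_0(C)$; (2) state that of $\trap_1(D)$; (3) assume a common point and derive $b_2 + s \le y$ and $y < b_2 + s$, a contradiction.
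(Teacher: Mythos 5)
Your proof is correct and is essentially the paper's argument in expanded form: the paper simply observes that $y-x \ge b_2-a_2$ on $\trap_0(C)$ while $y-x \le b_2-a_3 < b_2-a_2$ on $\trap_1(D)$, which is exactly the separating functional your combined inequalities $y \ge b_2+(x-a_2)$ and $y \le b_2+(x-a_3)$ reduce to. Your extra bookkeeping (the full half-plane descriptions, the constraint $x \ge a_3$, and the remark that $b_1<b_2$ is not needed) is harmless but not required.
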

\begin{proof}
Observe that for all $(x,y)\in \trap_0(C)$ we have $y-x \ge b_2 -a_2$ while 
for all $(x,y)\in \trap_0(D)$ we have $y-x \le b_2-a_3$.
\end{proof}

The analysis of the plane strategy is based on two key propositions. The first states that the play areas of all $\Zhp$ games employed by Spreader are disjoint.

\begin{prop}\label{prop: pl: ga disjoint}
Let $t_0,t_1\in\N$, $i_0,i_1\in\II$. If $\GG^{i_0}_{t_0} \ne {\GG^{i_1}_{t_1}}$ then $\area^{\GG^{i_0}_{t_0}}_{t_0} \cap \area^{\GG^{i_1}_{t_1}}_{t_1} = \emptyset$.
\end{prop}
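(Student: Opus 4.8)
The plan is to prove Proposition~\ref{prop: pl: ga disjoint} by a double induction: first showing that distinct \emph{concurrent} games (two games alive at the same time on different fronts) have disjoint play areas, and then leveraging this together with Proposition~\ref{prop: ga prop} to handle games separated in time.

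\textbf{Step 1: reduction to concurrent games.} First I would observe that it suffices to prove the statement when $t_0 = t_1 = t$ and $i_0 \ne i_1$, \emph{plus} the statement for two games on the \emph{same} front at different times. For the latter: if $\GG^i_{t_0} \ne \GG^i_{t_1}$ with (say) $t_0 < t_1$, then the game $\GG^i_{t_0}$ must have terminated at some time $s \le t_1$, i.e. $\BB^i_s = \emptyset$; by Proposition~\ref{prop: ga prop}\ref{prop: ga prop, item: dead front} (applied in the rotated-shifted copy), $\area^{\GG^i_{t_0}}_{t_0} \subseteq \area^{\GG^i_{t_0}}_{s}$ lies in the closed half-plane $\{d\theta^i + c\theta^{i+1} : d \le \rho^i_{s}, c\in\Z\}$ at or behind the front line at termination, while the new game $\GG^i_{t_1}$ is initialised strictly further out (since $\Delta\rho^i_\cdot = 1$ upon each re-initialisation, and by \eqref{eq: init} the new play area sits on $L^i(\rho^i_s) + \theta^i$ or beyond) — so by Proposition~\ref{prop: ga prop}\ref{prop: ga prop, item: future ga}, $\area^{\GG^i_{t_1}}_{t_1}$ stays inside the forward trapezoid of its (further-out) initial set and never reaches back to $\area^{\GG^i_{t_0}}_{t_0}$. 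The general case (different fronts, different times) then follows by tracking each game forward/backward to a common time using Proposition~\ref{prop: ga prop}\ref{prop: ga prop, item: future ga} and chaining with the concurrent case.

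\textbf{Step 2: concurrent games on adjacent fronts.} This is the crux. Take $i_1 = i_0 + 1$ (the case $|i_0 - i_1| = 2$, i.e. opposite fronts, is easier and handled by a separation-of-half-planes argument using the occupation radii). I would induct on $t$. At $t = 1$ the four games occupy the explicit sets $\{\theta^i,\theta^{i,i+1}\}\setminus\FF_1$, which are pairwise disjoint by inspection. For the inductive step, by Proposition~\ref{prop: ga prop}\ref{prop: ga prop, past rectangle} applied to $\GG^{i_0}_t$ (in its rotated frame) the play area $\area^{\GG^{i_0}_t}_t$ lies in a rectangle $[x_{\min}-1, x_{\max}+1]\times\Z$ spanned (in the $\theta^{i_0+1}$ coordinate) by the extreme columns ever occupied by that game, and similarly $\area^{\GG^{i_1}_t}_t$ lies in an analogous rectangle in the perpendicular frame. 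By the induction hypothesis and the re-initialisation rule \eqref{eq: init}, the columns occupied by the $i_0$-game on the side facing front $i_1$ never exceed the front line $L^{i_1}$, and vice versa; combining this with Proposition~\ref{prop: ga prop}\ref{prop: ga prop, item: future ga} and Claim~\ref{clm: disjoint trap} (which is precisely the geometric fact that a forward trapezoid based on a horizontal segment ending before $a_3$ is disjoint from a forward trapezoid based on a vertical segment at $a_3$) gives the disjointness. The delicate point is the \emph{boundary interaction}: when a game on front $i_1$ re-ignites from a vertex occupied by the $i_0$-game, \eqref{eq: init} deliberately excludes vertices already in $\area^{\GG^{i_0}_t}$, so the newly created play area is disjoint from $\area^{\GG^{i_0}_t}$ \emph{at its creation time}, and then Proposition~\ref{prop: ga prop}\ref{prop: ga prop, item: future ga} propagates this disjointness forward.

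\textbf{Main obstacle.} The hardest part will be making the bookkeeping of Step~2 airtight: one must carefully verify that the rectangle/trapezoid bounds from Proposition~\ref{prop: ga prop} for one game, intersected with the front-line constraints coming from the occupation radii \eqref{eq: fire-rad-def}, genuinely force the two play areas onto opposite sides of a separating diagonal line, \emph{uniformly over all the re-initialisations} that may have occurred. In particular one has to rule out the scenario where a game drifts sideways (via repeated northwest expansions of $\min B_t$) far enough to collide with a neighbouring front's area; this is controlled by Proposition~\ref{prop: ga prop}\ref{prop: ga prop, item: future ga}, which confines each game to the forward trapezoid of its occupied set, but assembling these confinements for all four fronts simultaneously — and checking the corner cases where a re-ignition happens on the very turn a neighbouring game would otherwise intrude — requires care. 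I would organise this by proving, as a strengthened induction hypothesis, that for each $i$ the entire play area $\area^{\GG^i_t}_t$ is contained in the closed quadrant-like region $\{c_1\theta^i + c_2\theta^{i+1} : c_1 \ge 1,\ |c_2| \le c_1\} \cap \trap_i(\BB^i_{t})$-type set bounded by the two adjacent front lines, from which pairwise disjointness with all other fronts is immediate via Claim~\ref{clm: disjoint trap}.
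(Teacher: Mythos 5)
Your toolkit is the right one (Proposition~\ref{prop: ga prop}\ref{prop: ga prop, item: dead front}--\ref{prop: ga prop, past rectangle} together with Claim~\ref{clm: disjoint trap}), and your treatment of two games on the \emph{same} front is exactly the paper's argument: the old game's area sits at or behind the front line at its termination by Proposition~\ref{prop: ga prop}\ref{prop: ga prop, item: dead front}, the new game is seeded one step further out and stays in its forward trapezoid by Proposition~\ref{prop: ga prop}\ref{prop: ga prop, item: future ga}. The genuine gap is in your Step~1 reduction ``to concurrent games.'' First, two games on adjacent fronts need never be alive at a common time: a front can be re-ignited long after the neighbouring game you are comparing against has terminated, so there is no common time to track to. Second, even when a common time $u$ exists, disjointness of the two play areas \emph{at time $u$} plus Proposition~\ref{prop: ga prop}\ref{prop: ga prop, item: future ga} does not propagate forward: both games keep growing, and you would also need that each game's forward trapezoid $\trap(\BB^{\cdot}_u)$ is disjoint from the other game's accumulated area and from the other game's trapezoid. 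That is a strictly stronger statement than the one you propose to induct on, so ``chaining with the concurrent case'' is not a valid step as written. The paper avoids the reduction altogether: for adjacent fronts it anchors the argument at the initialisation time $s$ of the later game, notes that the seed $O$ of \eqref{eq: init} lies on the advanced line $L^{i_1}(\rho^{i_1}_{s-1}+1)$ while the neighbour's area accumulated up to time $s-1$ lies in the column strip $[-\rho^{i_1+2}_{s-1},\rho^{i_1}_{s-1}]$ by Proposition~\ref{prop: ga prop}\ref{prop: ga prop, past rectangle}, and then controls the neighbour's \emph{subsequent} growth by Proposition~\ref{prop: ga prop}\ref{prop: ga prop, item: future ga} plus Claim~\ref{clm: disjoint trap}. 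Your strengthened invariant at the end would, if carried out with the correct one-step offsets, amount to the same thing (note the naive cones $\{|c_2|\le c_1\}$ for adjacent directions overlap along the diagonal, so the offset built into Claim~\ref{clm: disjoint trap} is essential), but it is doing the work that your Step~1 pretends is already done.

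A second, smaller inaccuracy: \eqref{eq: init} does not ``deliberately exclude vertices already in $\area^{\GG^{i_0}_t}$.'' The formula only intersects certain translates of the neighbouring occupied sets with the new front line; disjointness from the neighbour's play area is a \emph{consequence} of $O$ sitting one column beyond $\rho^{i_1}_{s-1}$ (against the rectangle bound) and of Claim~\ref{clm: disjoint trap} (against the neighbour's forward trapezoid). It is precisely the content of the proposition, not a feature of the definition, so it cannot be cited as the reason the newly created area is disjoint at creation time.
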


\begin{proof}
Without loss of generality assume that $i_0=0$ and $t_0 \le t_1$. Firstly, we consider the case $i_1=0$, under which $t_0 \ne t_1$. 
Let $t_0 < s \le t_1$ be the turn in which the game $\GG^0_{t_1}$ was initialised, so that $B^0_{s-1} = \emptyset$. As $\GG^0_{t_0}$ must have terminated before turn $s$, we obtain, by  Proposition~\ref{prop: ga prop}\ref{prop: ga prop, item: dead front}, that $\area^0_{t_0} \subseteq (-\infty, \rho^0_{s-1}]$.
In contrast, $\area^0_{t_1} \subset \trap(B^0_s)$ by Proposition~\ref{prop: ga prop}\ref{prop: ga prop, item: future ga}, and $\trap(B^0_s) \subset \Z \times [\rho^0_{s}, \infty) = [\rho^0_{s-1}+1, \infty)$. Thus, the two play areas are disjoint.

We are left with the case $i_0\ne i_1$. Clearly, areas of non-adjacent fronts cannot intersect, so we assume that $i_1=1$. The case $i_1=-1$ follows by similar arguments. Observe that at turn $1$, when the four starting games are initialised, their initial occupied sets generate disjoint trapezoids, and hence they have disjoint play areas.
Next, let $s \le t_1$ be the turn in which $\GG^1_{t_1}$ was initialised, so that $B^1_{s-1} = 0$ and $B_{s-1} \cap L^1_{s-1} \ne \emptyset$.
Recall that, by \eqref{eq: init}, $B^1_s = O \setminus \FF_s$, where
\[O := L^1_{s} \cap \left(\big(B^{2}_{s-1} + \{\theta^1, \theta^{0,1}\}\big) \cup \big(B^{0}_{s-1} + \{\theta^1, \theta^{1,2}\}\big)  \right).\]
By Proposition~\ref{prop: ga prop}\ref{prop: ga prop, item: future ga} we have $\trap(B^1_{t_1})\subset \trap(B^1_{s})\subset\trap(O)$. To complete the proof it would thus suffice to show that $\area^0_{t_0} \cap \trap(O) = \emptyset$.

Indeed, by Proposition~\ref{prop: ga prop}\ref{prop: ga prop, past rectangle} and~\eqref{eq: fire-rad-def} we have
$\area^0_{s-1}\subset [-\rho^3_{s-1}, \rho^1_{s-1}]\times \Z$. This latter set is disjoint from $\trap(O)$, as $O \subset \{\rad^1_{s}\} \times \Z=\{\rad^1_{s-1}+1\} \times \Z$. Hence, if $t_0 \le s-1$, then $\area^0_{t_0} \subset \area^0_{s-1}$ and we are done.
Otherwise, if $t_0>s-1$, we have $\area^0_{t_0}\setminus \area^0_{s-1} \subset \trap(B^0_{s-1})$, by Proposition~\ref{prop: ga prop}\ref{prop: ga prop, item: future ga}. However, $\trap(B^0_{s-1})$ is disjoint from $\trap(O)$ by Claim~\ref{clm: disjoint trap}. The proposition follows.
\end{proof}

The second proposition states that the amortised potential of an advancing front grows by at least $2$.
\begin{prop}\label{prop: pl: Phi grows}
$\Delta \phi^i_t + \Delta |F^i_t| + \Delta d_t^i \ge 2\Delta \rho^i_t$.
\end{prop}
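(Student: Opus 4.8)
\textbf{Proof proposal for Proposition~\ref{prop: pl: Phi grows}.}

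The plan is to reduce this proposition to Proposition~\ref{prop: Phi grows by 2, quarter}, the analogous statement for a single directed half-plane game, via a careful bookkeeping of which game is played at the relevant turn. First I would dispose of the trivial case: if $\Delta\rho^i_t = 0$ there is nothing to prove, since $\Phi^i$ is built from $\Phi$-values of individual games and those never decrease across a turn when no new debt-free obstruction appears — more precisely, $\Delta\Phi^i_t \ge 0$ whenever the game $\GG^i_t$ is played for the whole step, and when no game is active on front $i$ we have $\Phi^i_t = 0 = \Phi^i_{t-1}$ by our default convention. So assume $\Delta\rho^i_t = 1$, which by~\eqref{eq: fire-rad-def} means a vertex of $\BB_{t-1}$ lies on $L^i_{t-1}$, occupied as part of an adjacent front game, so that a fresh $\Zhp$ game $\GG$ is initialised on front $i$ at turn $t$ with the starting conditions given by~\eqref{eq: init}.

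The main point is then: at the turn of initialisation the potential of the new game already satisfies $\Phi^i_t(\seg^\GG_t) \ge 2$. Here I would invoke Proposition~\ref{prop: Phi grows by 2, quarter} — but with care, because that proposition concerns the increment of $\Phi$ across a step in which $b_{t-1}(\seg_{t-1}) > 0$, whereas at an initialisation turn the game has no prior state. The correct route is to note that the game $\GG$ is initialised with occupied set $O\setminus\FF_t$, deleted set $\FF_0 := \FF_t$ and counted-deleted set $F_0 := O\cap\FF_t$; by definition $\Phi_0(\seg_0) = \ppo_0(\seg_0) + d_0(\seg_0) = \phi_0(\seg_0) + f_0(\seg_0) + 0$. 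Since $O\subset L^i_t$ is a set of at least one vertex adjacent to an occupied vertex on the neighbouring front, and every vertex of $O$ is either occupied (contributing to $b_0 = \phi_0$ in simple segments, where all initial segments are simple since the occupied set is small) or counted as deleted (contributing to $f_0$), we get $\phi_0(\seg_0) + f_0(\seg_0) = |O|$. It then remains to check $|O| \ge 2$: by~\eqref{eq: init}, $O$ consists of all the neighbours on $L^i_t$ of the adjacent occupied vertex $v$ that are not already in the adjacent front's play area, and a single vertex $v = \BB^{i\pm1}_{t-1}$-vertex has three neighbours on $L^i_t = L^i_{t-1} + \theta^i$, of which at most one lies in $\trap$ of the adjacent front (this is precisely the content of a Claim~\ref{clm: disjoint trap}-type argument, or can be seen directly from the geometry of the $\theta^{i,i\pm1}$ directions). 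Hence $|O| \ge 2$ and $\Phi^i_t \ge 2 = 2\Delta\rho^i_t$.

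The main obstacle I anticipate is the boundary/edge-case reasoning in the previous paragraph: one must confirm that at an initialisation turn all segments of the new game are indeed simple (so that $\phi_0 = b_0$ and the pre-potential equals $|O|$), that the possible presence of deleted vertices among the neighbours of $v$ does not reduce the count below $2$ (this is why $F_0$ is set to $O\cap\FF_t$ — deleted vertices in $O$ are counted toward $f_0$ and so still contribute to $\Phi_0$), and that no double-counting occurs between $b_0$ and $f_0$. A secondary subtlety is that at an initialisation turn, $\rho^i$ jumps from its default state, so one must be careful that the ``default value $0$'' convention for $\Phi^i$ at inactive times is compatible with $\Delta\Phi^i_t = \Phi^i_t - 0 = \Phi^i_t \ge 2$; this is immediate once the convention is spelled out. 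Finally, in the generic case where $\Delta\rho^i_t = 1$ but the game was already active (which by~\eqref{eq: fire-rad-def} and the termination rule cannot actually happen — a game cannot be initialised on the turn another terminates, and $\rho^i$ only increases at initialisation turns on front $i$), there is nothing further to do; I would note this explicitly to close the argument.
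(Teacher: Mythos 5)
There is a genuine gap, and it is in your closing parenthetical. You assert that the case ``$\Delta\rho^i_t=1$ with the front-$i$ game already active'' cannot occur because ``$\rho^i$ only increases at initialisation turns on front $i$''. That is false: the $\Zhp$ strategy keeps all of its occupied vertices on the advancing front row, so whenever $\GG^i$ is active at time $t-1$ its occupied set lies on $L^i_{t-1}$, and \eqref{eq: fire-rad-def} then forces $\Delta\rho^i_t=1$. This happens at every turn of every active game -- it is precisely what produces $\Delta\rho_t\ge 3$ in Section~\ref{subsection: pf of thm 1} -- so it is the generic case of the proposition, not an impossible one. In that case the paper concludes $\Delta\Phi^i_t\ge 2$ by a direct application of Proposition~\ref{prop: Phi grows by 2, quarter}, which is available since $b^i_{t-1}>0$; your argument as written never covers this case, and consequently never uses Proposition~\ref{prop: Phi grows by 2, quarter} where it is actually needed. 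Relatedly, your opening claim that $\Delta\rho^i_t=1$ ``means a vertex of $\BB_{t-1}$ lies on $L^i_{t-1}$ occupied as part of an adjacent front game'' builds this omission in from the start: the vertex on $L^i_{t-1}$ may equally well belong to front $i$'s own active game.

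Your handling of the other two cases is essentially the paper's. For $\Delta\rho^i_t=0$ one has $\BB^i_{t-1}=\emptyset$, hence $\phi^i_{t-1}=d^i_{t-1}=0$ by Proposition~\ref{prop: no fire -> no ptl no dbt}, and $\Delta\Phi^i_t\ge\Delta f^i_t\ge 0$ (your ``$\Phi^i_t=0=\Phi^i_{t-1}$'' is slightly loose about the accumulated $f^i$, but the increment argument is fine). For the re-ignition case your computation matches the paper: all initial segments are simple, $B_0=O\setminus\FF_t$ and $F_0=O\cap\FF_t$ partition $O$, Proposition~\ref{prop: pl: ga disjoint} guarantees the deleted vertices in $O$ are newly counted, and $|O|\in\{2,4\}$ gives $\Delta\Phi^i_t\ge|O|\ge 2$ -- and, as you correctly anticipated, Proposition~\ref{prop: Phi grows by 2, quarter} is not the right tool there. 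Restoring the three-way case split ($\Delta\rho^i_t=0$; active game taking a $\Zhp$ step, handled by Proposition~\ref{prop: Phi grows by 2, quarter}; re-ignition, handled by your counting of $O$) closes the gap.
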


\begin{proof}
If $\Delta \rho^i_t = 0$, then, by~\eqref{eq: fire-rad-def}, we have $B^i_{t-1} = \emptyset$. By Proposition~\ref{prop: no fire -> no ptl no dbt}, this implies that $\phi^i_{t-1}(\seg^i_{t}) = 0$ and $d^i_{t-1}(\seg^i_{t}) = 0$. As $\Delta |F_t^i(\seg^i_t)|\ge 0$, we obtain $\Delta \phi_t^i(\seg^i_t) + \Delta |F_t^i(\seg^i_t)| + \Delta d_t^i(\seg^i_t) \ge 0$.

We therefore assume $\Delta \rho^i_t = 1$. We consider separately the cases $a^i_{t-1} =1$ and $a^i_{t-1} =0$. If $a^i_{t-1} =1$ then $\GG^i_{t-1}$ is active and takes a step according to its $\Zhp$ strategy, in which case, the proposition follows directly from Proposition~\ref{prop: Phi grows by 2, quarter}.
If $a^i_{t-1} = 0$, we must have $B_{t-1}\cap L^i_{t-1} \ne \emptyset$. 
According to the strategy, a new game starts on the trapezoid generated by
 \[O = L^i_{t} \cap \left((B^{i+1}_{t-1} + \{\theta^i, \theta^{i-1,i}\}) \cup (B^{i-1}_{t-1} + \{\theta^i, \theta^{i,i+1}\})  \right),\]
where $B^i_t \cup F^i_t=O$. Observe that, in this case $|O|\in \{2,4\}$. Since $a^i_{t-1} = 0$, we have $d^i_{t-1}=\phi^i_{t-1}=0$,
and, in addition $\Delta |F^i_t| = |F^i_t|$, by Proposition~\ref{prop: pl: ga disjoint}. Since all segments in $\seg^i_t$ are initialised to be simple, and $a^i_{t-1} = 0$, we have $\Delta \phi^i_t = |B^i_t|$. Putting all of these together, we have $\Delta \phi_t^i + \Delta |F_t^i| + \Delta d_t^i \ge |B^i_t| + |F^i_t| = |O| \ge 2$.
\end{proof}

\subsection{Proof of Theorem~\ref{thm: sub-linear win} on \texorpdfstring{$\ZZ$}{ZZ}}\label{subsection: pf of thm 1}

In this section we conclude the proof of Theorem~\ref{thm: sub-linear win} by applying Propositions~\ref{prop: pl: ga disjoint} and~\ref{prop: pl: Phi grows} to imitate the proof of~\cite{feldheim20133}*{Theorem~1}. The key observation, which follows directly from~\eqref{eq: fire-rad-def}, is that $B^i_t$ is always contained in the axis-aligned interval
\[\horint^i_t := \rho^i_t \cdot \theta^i + [-\rho^{i-1}_t, \rho^{i+1}_t] \cdot \theta^{i+1}.\]

Throughout the proof, putting several superscripts in a function serves to represent summation over their values, e.g. $\rho_t^{13} := \rho^1_t + \rho^3_t$. Omitting a superscript serves to represent summation over all possible superscripts, e.g. $\rho_t := \sum_{i\in\II} \rho^i_t$.
We start by proving several auxiliary claims.

\begin{clm}\label{clm: pl: phi+d <= fnt len}
$\phi^i_t + d^i_t \le |\horint^i_t|$.
\end{clm}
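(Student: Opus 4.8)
\textbf{Proof plan for Claim~\ref{clm: pl: phi+d <= fnt len}.}

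The plan is to reduce this to the half-plane estimate of Proposition~\ref{prop: d + phi <= h}, summed over all segments of the game $\GG^i_t$. First I would recall that, by the way the front game $\GG^i_t$ is embedded into $\ZZ$, the segments $\seg^i_t$ of the current $i$-th half-plane game tile precisely the portion of the line $L^i_t$ on which Spreader is allowed to place occupied vertices, and that $\BB^i_t$ together with all vertices counted by $F^i_t$ on the front line live inside the axis-aligned interval $\horint^i_t$. In particular, since $h_t \mid $ (the width of the tiling into segments), the number of segments $|\seg^i_t|$ active in the game is at most $\lceil |\horint^i_t| / h_t\rceil$, and the segments are contained (up to the usual $\pm 1$ slack from westward expansion, which is already accounted for in the definition of $\horint^i_t$ via $\rho^{i\pm1}_t$) in $\horint^i_t$.

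Given that, I would apply Proposition~\ref{prop: d + phi <= h} to each segment $S\in\seg^i_t$, obtaining $\phi_t(S) + d_t(S) \le h_t$, and sum over $S\in\seg^i_t$. This yields
\[
\phi^i_t + d^i_t = \sum_{S\in\seg^i_t}\bigl(\phi_t(S) + d_t(S)\bigr) \le |\seg^i_t|\cdot h_t.
\]
The remaining point is to check that $|\seg^i_t|\cdot h_t \le |\horint^i_t|$, i.e. that the total width spanned by the active segments does not exceed the length of the front interval. This is where a little care is needed: one should only count segments that actually carry occupied vertices or counted deleted vertices (a priori $\seg_t$ is an infinite partition of $\Z$), and one invokes the fact, established e.g. via Proposition~\ref{prop: simulated path} / Proposition~\ref{prop: small fire -> simple and debt free}, that segments entirely outside a bounded neighbourhood of $\BB^i_t$ have $\phi = d = 0$ and hence contribute nothing. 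Since $\BB^i_t \subset \horint^i_t$ and every such nonzero-contributing segment intersects a fixed neighbourhood of $\BB^i_t$ contained in $\horint^i_t$, the count of contributing segments times $h_t$ is at most $|\horint^i_t|$.

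The main obstacle I anticipate is the bookkeeping at the two ends of the front: a segment straddling the boundary of $\horint^i_t$ might stick out by up to one segment-width, and the westward expansion peculiar to $\Zhp$ adds a further column at the left end. I would handle this exactly as in the proof of Proposition~\ref{prop: ga prop}\ref{prop: ga prop, past rectangle}: the leftmost and rightmost occupied segments are simple (by Proposition~\ref{prop: small fire -> simple and debt free}), hence carry no debt and have $\phi$ equal to their occupied count, which is already absorbed into the $\pm1$ slack built into the definition $\horint^i_t := \rho^i_t\theta^i + [-\rho^{i-1}_t,\rho^{i+1}_t]\theta^{i+1}$ via the neighbouring fronts' radii. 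So no genuine loss occurs, and the displayed inequality $\phi^i_t + d^i_t \le |\horint^i_t|$ follows.
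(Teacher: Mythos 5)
Your overall route is the same as the paper's: split the segments of $\GG^i_t$ into those lying wholly inside $\horint^i_t$ (where Proposition~\ref{prop: d + phi <= h} gives a contribution of at most $h_t$, i.e.\ exactly their length), and the boundary/outside segments (where Proposition~\ref{prop: small fire -> simple and debt free} gives simplicity and zero debt). So no new idea is missing. However, the accounting step you reduce everything to, namely $|\seg^i_t|\cdot h_t \le |\horint^i_t|$, is false in general and your proposed repair does not work: a segment straddling an endpoint of $\horint^i_t$ can stick out by up to $h_t-1$ on each side, a total excess of order $2h_t$, whereas the ``$\pm1$ slack'' in $|\horint^i_t| = \rho^{i-1}_t+\rho^{i+1}_t+1$ is a single vertex and cannot absorb it. Attributing the loss to the definition of $\horint^i_t$ is therefore not a valid justification.

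The correct way to close this (and what the paper does) is not to charge straddling segments their full width $h_t$ at all. For such a segment $S$, one side of it contains few occupied vertices (since $\BB^i_t\subset\horint^i_t$ by \eqref{eq: fire-rad-def}), so Proposition~\ref{prop: small fire -> simple and debt free} gives $d^i_t(S)=0$ and that $S$ is simple, whence by \eqref{eq:simple phi} $\phi^i_t(S)=b^i_t(S)$; and since all occupied vertices of $S$ lie in $\horint^i_t$, this is at most the size of the overlap of (the rotated copy of) $S\times\{t\}$ with $\horint^i_t$. Segments disjoint from $\horint^i_t$ contribute $0$ by the same argument. Summing ``overlap length'' over all segments then gives exactly $|\horint^i_t|$, with no slack needed. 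You have all the ingredients for this in your last paragraph, but as written the decisive inequality is justified by an argument that would fail; replace it with the per-segment bound $\phi^i_t(S)+d^i_t(S)\le$ (length of $S$'s intersection with $\horint^i_t$) in the boundary case and the claim follows.
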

\begin{proof}
Given a set $S\subset \Z$, we write $D_t^i(S)$ for the set $S\times \{t\}$, rotated to direction $\theta^i$.
Let $S\in \seg_t(\GG^i_t)$ such that $D_t^i(S) \cap \horint^i_t \ne \emptyset$.
If $D_t^i(S)\subset \horint^i_t$, then, by Proposition~\ref{prop: d + phi <= h}, we have $\phi^i_t(S) + d^i_t(S) \le h_t = |S| = D^i(S)$. Otherwise, by Corollary~\ref{cor: small fire -> simple and debt free} and~\eqref{eq:simple phi}, $\phi^i_t(S) = |B^i_t(S)| \le |D^i(S)\cap \horint^i_t|$, and $d^i_t(S)=0$. The claim follows.
\end{proof}

By Proposition~\ref{prop: pl: Phi grows} we have $\Delta \phi^i_t + \Delta |F^i_t| + \Delta d_t^i \ge 2\Delta \rad^i_t$ for every $t\in\N$. Summing this over time-steps $1,\dots,t$ we obtain
\begin{equation}\label{eq: pl: ptl at fnt}
    \phi_t^i + d_t^i \ge \phi_1^i  - |F_t^i| +2\rho^i_t - 2\rho^i_1= 2\rho^i_t - |F_t^i|.
\end{equation}

Recalling the definition of $\horint^i_t$, observe that $|\horint^i_t| = 1+\rad^{i-1}_t + \rad^{i+1}_t$ and that $\rad_t$ is the semi-perimeter of the rectangle bounding the occupied set.
We prove two claims under the assumption $\rad_t > |F_t|$, which will be established inductively in the course of the proof.

\begin{clm}\label{clm: pl: phi+d >= rad}
If $\rad_t \ge  |F_t|+3$ then $\phi_t + d_t \ge \rad_t+3$.
\end{clm}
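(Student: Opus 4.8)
The plan is to sum the growth estimate~\eqref{eq: pl: ptl at fnt} over all four cardinal directions and then feed in the two auxiliary bounds already at hand. Summing \eqref{eq: pl: ptl at fnt} over $i\in\II$ gives
\[
\phi_t + d_t \;\ge\; \phi_0 - f_t + 2\rho_t - 2\rho_0,
\]
where I use the convention that an omitted superscript denotes summation over $\II$. The term $f_t$ here is $\sum_{i}f_t^i$, and I must be a little careful: the quantity in Claim~\ref{clm: pl: phi+d >= rad} labelled $f_t$ is the total number of deleted vertices counted by Spreader across all four half-plane games, which is legitimate precisely because Proposition~\ref{prop: pl: ga disjoint} guarantees the play areas (and in particular the $F$-sets) of distinct games are disjoint, so no deleted vertex is ever double-counted; this is the place where disjointness of play areas is essential. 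So $f_t \le \str t$ by an analogue of Claim~\ref{clm: f < rho t} applied front-by-front and summed.

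Next I would use the hypothesis $\rho_t - \rho_0 \ge f_t$ to absorb the $-f_t$ term: from the displayed inequality,
\[
\phi_t + d_t \;\ge\; \phi_0 - f_t + 2\rho_t - 2\rho_0 \;\ge\; \phi_0 + (\rho_t - \rho_0) + 2\rho_t - 2\rho_0 - 2(\rho_t-\rho_0) \;=\; \phi_0 + \rho_t - \rho_0,
\]
wait — let me instead just write $2\rho_t - 2\rho_0 - f_t \ge 2\rho_t - 2\rho_0 - (\rho_t - \rho_0) = \rho_t - \rho_0$, so that $\phi_t + d_t \ge \phi_0 + \rho_t - \rho_0$. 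It remains to check that $\phi_0 - \rho_0 \ge \rho_0$, i.e. that the initial potential $\phi_0$ is at least $2\rho_0$. For the actual game $\ZZ$ is started from $\BB_0 = \{(0,0)\}$ at turn $1$ with the four half-plane games initialised from $\BB_1^{\GG^i} = \{\theta^i,\theta^{i,i+1}\}\setminus\FF_1$, so $\rho_0^i = 0$ (or a small constant) for each $i$ and $\phi_0^i = b_0^i \ge 0$; one checks directly from the initialisation that $\phi_0 \ge 2\rho_0$. Hence $\phi_t + d_t \ge \phi_0 + \rho_t - \rho_0 \ge \rho_t + \rho_0$, which is the claim.

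The main obstacle I anticipate is bookkeeping rather than conceptual: making sure the summed quantities $\phi_t, d_t, f_t, \rho_t$ are exactly the ones appearing in the claim and that the disjointness from Proposition~\ref{prop: pl: ga disjoint} really does license treating $f_t$ as a single additive quantity (including across games that have terminated and been re-initialised on the same front, where one must invoke that a game cannot be initialised on the same turn another terminates, so the $F$-sets of successive games on a front are also disjoint by Proposition~\ref{prop: ga prop}\ref{prop: ga prop, item: dead front} together with Proposition~\ref{prop: ga prop}\ref{prop: ga prop, item: future ga}). The other point requiring a line of care is the base-case verification $\phi_0 \ge 2\rho_0$, which should be immediate from the explicit initial configuration but must be stated. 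Everything else is a two-line manipulation of \eqref{eq: pl: ptl at fnt}.
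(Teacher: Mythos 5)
Your proposal is correct and follows essentially the same route as the paper: sum~\eqref{eq: pl: ptl at fnt} over the four directions, use the hypothesis $\rho_t-\rho_0\ge f_t$ to absorb $-f_t$, and invoke the initial-condition fact (the paper simply observes $\phi_0 = 2\rho_0$, which is the equality version of the bound $\phi_0\ge 2\rho_0$ you defer to the initialisation). Your additional remarks on the disjointness of play areas justifying the additivity of $f_t$ are fine but are handled in the paper where the claim is applied, not inside this proof.
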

\begin{proof}
Summing (\ref{eq: pl: ptl at fnt}) over all $i\in\II$, and observing that $\phi_1= 2\rad_1$, yields
\[\phi_t + d_t \ge 2\rho_t - |F_t|\ge \rad_t+3\qedhere\]
\end{proof}

\begin{clm}\label{clm: pl: opposing fnts}
If $\rho_t \ge |F_t| + 3$  then $|B^{i,i+2}_t| > 0$ for every $i\in\II$.
\end{clm}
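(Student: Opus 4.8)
The plan is to argue by contradiction. I would assume the hypothesis $\rho_t-\rho_0\ge f_t$ and suppose, for some $i\in\II$, that the two opposing fronts $i$ and $i+2$ are both extinguished at time $t$, i.e. $b^i_t=b^{i+2}_t=0$; as the computation below is symmetric in the choice of which opposing pair dies, no loss of generality is incurred. Since then $\BB^i_t=\BB^{i+2}_t=\emptyset$, the games $\GG^i_t$ and $\GG^{i+2}_t$ have already terminated (or were never initialised), so by the conventions attached to terminated front games — namely that on a termination step all of the corresponding quantities vanish, which rests on Proposition~\ref{prop: no fire -> no ptl no dbt} — we get $\phi^i_t+d^i_t=\phi^{i+2}_t+d^{i+2}_t=0$. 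Feeding these into~\eqref{eq: pl: ptl at fnt} for the fronts $i$ and $i+2$, and dropping the nonnegative terms $\phi^i_0,\phi^{i+2}_0$, I would record the bound
\[2\rho^i_t+2\rho^{i+2}_t\le 2\rho^i_0+2\rho^{i+2}_0+f^i_t+f^{i+2}_t,\]
which expresses that the two dead fronts cannot have travelled far without Container having spent firefighters on them.

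Next I would control the total surviving potential, which now sits entirely on the fronts $i+1$ and $i+3$. Applying Claim~\ref{clm: pl: phi+d <= fnt len} to these two fronts, together with the identity $|\horint^{i+1}_t|=|\horint^{i+3}_t|=1+\rho^i_t+\rho^{i+2}_t$, gives
\[\phi_t+d_t=(\phi^{i+1}_t+d^{i+1}_t)+(\phi^{i+3}_t+d^{i+3}_t)\le 2+2\rho^i_t+2\rho^{i+2}_t.\]
On the other hand, the standing hypothesis $\rho_t-\rho_0\ge f_t$ is exactly the hypothesis of Claim~\ref{clm: pl: phi+d >= rad}, which therefore supplies the matching lower bound $\phi_t+d_t\ge\rho_t+\rho_0$. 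Chaining the last two displays with the first one and using $\rho_t=\sum_{j\in\II}\rho^j_t$, $\rho_0=\sum_{j\in\II}\rho^j_0$, a routine rearrangement collapses everything to
\[2\rho^{i+1}_0+2\rho^{i+3}_0+f^{i+1}_t+f^{i+3}_t\le 2.\]
Since the starting configuration used to prove Theorem~\ref{thm: sub-linear win} has $\rho^j_0\ge 1$ for every $j\in\II$, the left-hand side is at least $4$, a contradiction; hence $b^i_t+b^{i+2}_t>0$ for every $i\in\II$.

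The step I expect to be the main obstacle is this final rearrangement — not because it is difficult but because it is tight: the additive $2$ coming from the two ``$+1$''s in $|\horint^{i+1}_t|+|\horint^{i+3}_t|$, together with the initial-data terms, must be swallowed, and it is swallowed precisely by the strictly positive quantity $2(\rho^{i+1}_0+\rho^{i+3}_0)$, i.e. by the fact that the starting fire genuinely reaches positive distance in the two cardinal directions perpendicular to the extinguished pair (this is also where the relation $\phi_0=2\rho_0$, packaged inside Claim~\ref{clm: pl: phi+d >= rad}, does its work — without it the cancellation of the $2\rho_0$ fails). So the real care is to push all the initial-condition constants honestly through the chain of inequalities; everything else is a mechanical combination of~\eqref{eq: pl: ptl at fnt}, Claim~\ref{clm: pl: phi+d <= fnt len} and Claim~\ref{clm: pl: phi+d >= rad}, and the hypothesis of the present claim enters only through Claim~\ref{clm: pl: phi+d >= rad}.
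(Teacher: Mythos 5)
Your argument is correct and reaches the same conclusion, but it is organised differently from the paper's proof. The paper argues directly, splitting into two cases according to whether $\rad^{i,i+2}_t$ or $\rad^{i+1,i+3}_t$ is at least $\tfrac12\rad_t$: in one case it bounds the potential of the perpendicular pair from above by Claim~\ref{clm: pl: phi+d <= fnt len} and subtracts it from the global lower bound of Claim~\ref{clm: pl: phi+d >= rad}; in the other it applies~\eqref{eq: pl: ptl at fnt} to the pair $\{i,i+2\}$ itself, and in both cases concludes $\phi^{i,i+2}_t+d^{i,i+2}_t>0$, hence $b^{i,i+2}_t>0$ via Proposition~\ref{prop: no fire -> no ptl no dbt}. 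You instead assume both fronts $i,i+2$ are dead, use the same Proposition~\ref{prop: no fire -> no ptl no dbt} to kill their potential, and chain~\eqref{eq: pl: ptl at fnt}, Claim~\ref{clm: pl: phi+d <= fnt len} and Claim~\ref{clm: pl: phi+d >= rad} into a single inequality $2\rho^{i+1,i+3}_0+f^{i+1,i+3}_t\le 2$, avoiding the case split entirely; the algebra checks out. The one point to flag is where the contradiction comes from: you need $\rho^{i+1}_0+\rho^{i+3}_0\ge 2$, which you justify by $\rho^j_0\ge 1$ for all $j\in\II$. Read literally from~\eqref{eq: fire-rad-def} at $t=0$ this is an assumption about the normalisation of the initial data rather than a proved fact, but it is exactly parallel to what the paper's own proof silently uses (Case~1 there needs $\rad_0>2$, and Claim~\ref{clm: pl: phi+d >= rad} uses $\phi_0=2\rad_0$), and it does hold for the intended initial configuration of the four front games; so this is not a gap relative to the paper's standard. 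Note also that by not discarding $\phi^i_0,\phi^{i+2}_0$ (and invoking $\phi_0=2\rad_0$ as the paper does) your final inequality would read $2\rho_0\le 2$, so the assumption on the initial radii could be relaxed to $\rho_0\ge 2$, making your hypothesis on the starting data even slightly weaker than the paper's.
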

\begin{proof}
Without loss of generality we assume that $i=0$. Note that $\rad^{02}_t + \rad^{13}_t = \rad_t$. We thus consider two cases.

\textbf{Case 1.} $\rad^{13}_t \ge \frac{1}{2}\rad_t$. By Claim~\ref{clm: pl: phi+d <= fnt len} we have
\[\phi^{13}_t+d^{13}_t \le |\horint^1_t| + |\horint^3_t| = 2\rad^{02}_t+2
= 2\rad_t - 2\rad^{13}_t + 2 \le \rad_t + 2.\]
By Claim~\ref{clm: pl: phi+d >= rad} we obtain
\[\phi^{02}_t + d^{02}_t = \phi_t + d_t - \phi^{13}_t - d^{13}_t \ge \rad_t +3 - \rad_t-2 > 0,\]
from which we conclude, by Proposition~\ref{prop: no fire -> no ptl no dbt}, that $|B^{02}_t| > 0$.

\textbf{Case 2.} $\rad^{02}_t \ge \frac{1}{2}\rad_t$. We have
\[\phi^{02}_t + d^{02}_t \ge
 2\rad^{02}_t - |F^{02}_t|\ge
\rad_t-|F_t| > 0,\]
where the first inequality is obtained by applying (\ref{eq: pl: ptl at fnt}) for $i = 0,2$, the second -- by plugging in $|F_t^{02}| \le |F_t|$ and $\rad^{02}_t \ge \frac{1}{2}\rad_t$, the third -- by the assumption that $\rho_t \ge |F_t| + 3$. As before, by applying Proposition~\ref{prop: no fire -> no ptl no dbt} we conclude that $|B^{02}_t| > 0$.
\end{proof}

To control the size of the occupied set, we apply our method for avoiding dead-ends to the plane. We define $B'$, the modified strategy in the plane, as the strategy that follows the modified strategy of Proposition~\ref{prop: devirt quarter pl} in every game $\GG^i_t$ separately.

Recalling our notation $\str(G,g)$ for the range of values $q$ such that $(G,q,g)$ is won by Container, we now establish the following proposition, from which Theorem~\ref{thm: sub-linear win} will be easily obtained.

\begin{prop}\label{prop: B' winning viable plane}
$3\notin \str(\ZZ, O(h_{t}^6) + \frac{8t}{h_{t}})$.
\end{prop}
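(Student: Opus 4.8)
The plan is to mimic the structure of the proof of Theorem~\ref{thm: sub-linear win} on $\Zei$ (and, more precisely, the firefighter analogue of \cite{feldheim20133}*{Theorem~1}), using the two key plane propositions — Proposition~\ref{prop: pl: ga disjoint} (disjointness of play areas) and Proposition~\ref{prop: pl: Phi grows} (directional potential growth) — together with the bookkeeping claims (Claims~\ref{clm: pl: phi+d <= fnt len}, \ref{clm: pl: phi+d >= rad}, \ref{clm: pl: opposing fnts}) just established. The heart of the argument is to show, by induction on $t$, that the modified strategy $B'$ in the plane keeps at least three fronts occupied at all times when $\str = 3$, hence $3 \notin \str(\ZZ, g)$ for the relevant $g$; and separately to bound $|B'_t|$.

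First I would run the inductive core. Fix $\str = 3$ and $\BB_0 = \{(0,0)\}$ (or the small initial set used to seed the four front games), and suppose Container plays some strategy with $\FF_0 = \emptyset$. The induction hypothesis at time $t$ is that $\rho_t - \rho_0 \ge f_t$, where $f_t = \sum_i f^i_t$. The base case is immediate since $\rho_0$ is a small constant, $f_0 = 0$, and the first few turns are handled directly. For the inductive step, observe that the total number of deleted vertices counted across all four games satisfies $f_t \le \str t = 3t$ by Claim~\ref{clm: f < rho t} applied to each game and the disjointness of the play areas from Proposition~\ref{prop: pl: ga disjoint} (which guarantees no deleted vertex is double-counted across games). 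Meanwhile, each time $\rho^i$ fails to increase, Proposition~\ref{prop: pl: Phi grows} still gives $\Delta\Phi^i_t \ge 0$, and the global semi-perimeter $\rho_t$ is nondecreasing. The point is that whenever Container deletes a vertex that stalls a front, the potential accounting from Proposition~\ref{prop: pl: Phi grows} forces $\rho$ to have grown enough elsewhere: summing $\Delta\Phi^i_t \ge 2\Delta\rho^i_t$ over $i$ and $t$, combined with Claim~\ref{clm: pl: phi+d <= fnt len} bounding $\phi^i_t + d^i_t$ by the front length $|\horint^i_t|$, yields that $\rho_t$ cannot be stuck while $f_t$ keeps growing — this is exactly where the $\str = 3$ versus $4$-fronts arithmetic (semi-perimeter grows by at most $2$ per unit increase in a single radius, four fronts, $2\cdot$potential per step) closes, precisely as in \cite{feldheim20133}. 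Having re-established $\rho_t - \rho_0 \ge f_t$ at time $t$, Claim~\ref{clm: pl: opposing fnts} gives $b^{i,i+2}_t > 0$ for every $i$, so in particular $\BB_t \ne \emptyset$ for all $t$ — Container never wins — giving $3 \notin \str(\ZZ, g)$ for the strategy's expansion rate $g$.

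Next I would bound the occupied set. By construction $B'$ runs the dead-end-avoiding strategy of Proposition~\ref{prop: devirt quarter pl} in each of the (at most four) active games $\GG^i_t$. Applying Proposition~\ref{prop: devirt quarter pl}\ref{prop: devirt quarter pl, viable} to each game with its own starting set $B_0$, and noting that these starting sets are of bounded size (the seeds at turn $1$, or the re-ignition sets $O$ of size at most $4$), we get $|B'^i_t| \le O(h_t^6) + \frac{2t}{h_t} + O(1)$ for each $i$. Summing over the four fronts yields $|B'_t| \le O(h_t^6) + \frac{8t}{h_t}$, which is exactly the claimed rate. One must also check that $B'$ is a legitimate Spreader strategy in $\ZZ$ — that the four rotated/shifted $\Zhp$ plays and the re-ignition rule are consistent with the $\ZZ$ adjacency and never overlap — but this is immediate from the disjointness of play areas (Proposition~\ref{prop: pl: ga disjoint}) and the definition of the re-ignition set $O$ in \eqref{eq: init}.

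The main obstacle I anticipate is the inductive maintenance of $\rho_t - \rho_0 \ge f_t$ and, bundled with it, verifying that the re-ignition mechanism does not break the potential accounting: when a front game terminates and is later re-initialised by a neighbouring front, one must be sure the "lost" potential of the dead front is genuinely $0$ (this is what Proposition~\ref{prop: no fire -> no ptl no dbt} gives, and why the default values $\phi = d = b = 0$ on inactive fronts are legitimate) and that the freshly seeded game's initial potential $|O|$ is correctly credited — Proposition~\ref{prop: pl: Phi grows} in the $a^i_{t-1} = 0$ case handles exactly this, but threading it through the global sum over all four fronts and all times, while keeping the $f_t$ bookkeeping disjoint via Proposition~\ref{prop: pl: ga disjoint}, is the delicate part. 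Once that bookkeeping is in place, the conclusion and the final size bound are routine.
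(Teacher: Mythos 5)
Your overall architecture is the same as the paper's: bound $|B'_t|$ by applying Proposition~\ref{prop: devirt quarter pl}\ref{prop: devirt quarter pl, viable} separately in each of the at most four games, with the $O(1)$ seeds, to get $O(h_t^6)+\frac{8t}{h_t}$; and prove Spreader survives by an induction built on Propositions~\ref{prop: pl: ga disjoint} and~\ref{prop: pl: Phi grows} together with Claims~\ref{clm: pl: phi+d <= fnt len}--\ref{clm: pl: opposing fnts}. The size bound and the use of Proposition~\ref{prop: devirt quarter pl}\ref{prop: devirt quarter pl, fire-preserving} to reduce to non-emptiness of $B_t$ are fine. The gap is in the inductive step. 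The only conclusion you actually derive from the invariant $\rho_t-\rho_0\ge f_t$ is Claim~\ref{clm: pl: opposing fnts}, i.e. $b^{i,i+2}_t>0$ for each $i$, which guarantees merely \emph{two} occupied fronts. That is enough for $\BB_t\ne\emptyset$ at time $t$, but it does not let you re-establish the invariant at time $t+1$: Container deletes up to three vertices per turn (so $\Delta f_{t+1}\le 3$ is all that Proposition~\ref{prop: pl: ga disjoint} gives you), while two live fronts only force $\Delta\rho_{t+1}\ge 2$. With two live fronts the invariant can be lost, and once it is lost Claims~\ref{clm: pl: phi+d >= rad} and~\ref{clm: pl: opposing fnts} no longer apply, so the induction collapses. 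Your remark that ``the $\str=3$ versus four-fronts arithmetic closes as in~\cite{feldheim20133}'' points at the right place, but that arithmetic is exactly the step you never carry out, and it is the only place the hypothesis $\str=3$ enters.

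The missing ingredient is the statement that no two \emph{adjacent} fronts can be simultaneously empty. Concretely: for each $i$, Claim~\ref{clm: pl: phi+d <= fnt len} applied to the pair $\{i,i+1\}$ gives $\phi^{i,i+1}_t+d^{i,i+1}_t\le|\horint^i_t|+|\horint^{i+1}_t|=\rho_t+2$, which combined with Claim~\ref{clm: pl: phi+d >= rad} ($\phi_t+d_t\ge\rho_t+\rho_0$) yields $\phi^{i+2,i+3}_t+d^{i+2,i+3}_t>0$, and then Proposition~\ref{prop: no fire -> no ptl no dbt} gives $b^{i+2,i+3}_t>0$. Together with $b^{i,i+2}_t>0$ from Claim~\ref{clm: pl: opposing fnts}, this shows at most one front is unoccupied at time $t$, hence $\Delta\rho_{t+1}\ge 3\ge\Delta f_{t+1}$, which is what sustains the invariant. (The paper in fact runs the induction directly on $\Delta\rho_t\ge 3$, with base case $t=1$ because three deletions are needed to block a single front there, and deduces $\rho_t-\rho_0\ge 3t\ge f_t$ at each step.) With this adjacent-fronts argument inserted, your proposal coincides with the paper's proof.
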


\begin{proof}
Let $t\in\N, i\in\II$. By Proposition~\ref{prop: devirt quarter pl}\ref{prop: devirt quarter pl, viable}, following the modified strategy in $\GG^i_t$ which started at some $t_0\in\N$, we are guaranteed to have 
\[|B^i_t| \le O(h_{t-t_0}^6) + \frac{2(t-t_0)}{h_{t-t_0}} + |B^i_{t_0}|\le O(h_{t}^6) + \frac{2t}{h_{t}} + |B^i_{t_0}|,\]
where $B^i_{t_0}$ is the initial occupied set of $\GG^i_t$. Using the fact that $|B^i_{t_0}| \le 4$, and that at most four games are played in parallel at every time-step, we conclude that indeed $|B'_t| \le O(h_{t}^6) + \frac{8t}{h_{t}}$.

By Proposition~\ref{prop: devirt quarter pl}\ref{prop: devirt quarter pl, fire-preserving}, $B'_t \ne \emptyset \iff B_t \ne \emptyset$, and hence to complete the proof it suffices to show that $B_t\ne\emptyset$.
To this end, we show by induction on $t$ that we have $\Delta \rho_t \ge 3$, so that the occupied set is never contained. In time-step $t=1$ four deleted vertices are needed to block a front, so that indeed $\Delta \rho_2 \ge 3$ and the basis of the induction is satisfied. Next, assume that this holds for all $t' \le t$ so that $\rad_{t}-\rad_1 =\sum_{\tau=2}^{t} \Delta \rho_\tau \ge 3t-3\ge |\FF_t|$.
By Proposition~\ref{prop: pl: ga disjoint}, we never count deleted vertices more than once towards $|F_t|$, so that $|F_t| \le |\FF_t|$, and hence $\rad_t \ge |F_t| + 3$.

Next, let $i\in\II$. By Claim~\ref{clm: pl: phi+d <= fnt len}, we have 
\[\phi^{i,i+1} + d^{i,i+1} \le |\horint^i_t| + |\horint^{i+1}_t| = \rad^{i-1}_t + \rad^{i+1}_t + \rad^{i}_t + \rad^{i+2}_t + 2 = \rad_t + 2.\]
However, by Claim~\ref{clm: pl: phi+d >= rad} we know that $\phi_t + d_t \ge\rad_t + 3$, from which we deduce that $\phi^{i+2,i+3} + d^{i+2,i+3} > 0$. By Proposition~\ref{prop: no fire -> no ptl no dbt} this implies that $|B^{i+2,i+3}_t| > 0$. On the other hand, by Claim~\ref{clm: pl: opposing fnts} we have $|B^{i,i+2}_t > 0|$. As $i\in\II$ was arbitrary, we conclude that no pair of fronts, whether adjacent or opposite can be blocked at time-step $t$. All in all, at least three fronts must contain occupied vertices, so that $\Delta \rho_{t+1} \ge 3$, as required.
\end{proof}

\begin{proof}[Proof of Theorem~\ref{thm: sub-linear win} for $\ZZ$]
As we did throughout this section, we assume that the game starts at time-step $1$ with $B_t=\{-1,0,1\}^2\setminus\{(0,0)\}$, and $\str\le 3$. Indeed, by~\cite{feldheim20133} we have $\str(\ZZ, \infty) = (3,\infty)$ so that,  to establish the theorem, by monotonicity over the spreading function, it would suffice to show  $\str(\ZZ, C_2 t^{6/7}) \subset (3,\infty)$. 

Setting $h_t := 2^{\lfloor \log_2((t+2^{14})^{1/7})\rfloor} = \Theta(t^{1/7})$, we have that $h$ satisfies \eqref{eq: doubling condition} (as in the proof of the theorem for $\Zei$ in Section~\ref{subsecion: strategy analysis}). We conclude from Proposition~\ref{prop: B' winning viable plane} that $3 \notin \str(\ZZ, C_2 t^{6/7})$ for all sufficiently large $C_2$. From monotonicity over the strength of Container, this implies $\str(\ZZ, C_2 t^{6/7}) \subset (3,\infty)$, as required.
\end{proof}

\section{Upper bounds}\label{section: upper bounds}
This section is dedicated to the proof of Theorem~\ref{thm: root-fire loses}. Recall that Theorem~\ref{thm: root-fire loses} is proven under the stronger-Spreader variation of the game, where Spreader is allowed to retain previously occupied vertices, i.e. $B_t$ is added to $B_{t-1}$, instead of replacing it (as per Remark~\ref{rmk: spreading instead of replacing}).

We first reduce the $\ZZ$ case of the theorem to that of $\Zei$, and then prove the latter. 

\subsection{A reduction to the eighth plane}\label{subsection: a reduction to Zei}

\begin{figure}[!ht]
\ffigbox
{%
  \begin{subfloatrow}
    \ffigbox[\FBwidth+0.5cm]
    {%
        \begin{tikzpicture}[scale=\sca/1.5]
        \draw[step=1cm,grayX,thin] (-1,-2) grid (2,1);
        \fill[gray] (0.5,-0.5) circle (10pt);
        \draw[black] (0,-1) rectangle (1,0);
        \foreach \x in {-1,1}
        {
        \draw[black, ultra thick] (\x+\xmargin,0+\xmargin) -- (\x+1-\xmargin,1-\xmargin);
        \draw[black, ultra thick] (\x+\xmargin,1-\xmargin) -- (\x+1-\xmargin,0+\xmargin);
        }
        \foreach \x in {0}
        {
        \draw[black] (\x+\xmargin,0+\xmargin) -- (\x+1-\xmargin,1-\xmargin);
        \draw[black] (\x+\xmargin,1-\xmargin) -- (\x+1-\xmargin,0+\xmargin);
        }
    \end{tikzpicture}
    }
    {%
      \subcaption{Step 1}%
      \label{fig: reduction, step 1}
    }
    \ffigbox[\FBwidth+0.5cm]
    {%
     \begin{tikzpicture}[scale=\smallsca/1.5]
    
    \draw[step=1cm,grayX,thin] (-22,-25) grid (25,1); 

        \fill[lightgrayX] (0,-21) rectangle (1,0); 
        \fill[gray] (0.5, -21.5) circle (10pt); 

        \fill[lightgrayX] (-20,-1) rectangle (21,0); 
        \fill[gray] (-20.5,-0.5) circle (10pt); 
        
        \fill[gray] (21.5,-0.5) circle (10pt); 
        \draw [black, densely dotted] (21.5, -0.5) -- (24, -6); 
        \draw [black, densely dotted] (21.5, -0.5) -- (24, 0); 
        \draw [black] (24, -6) rectangle (25,0); 
        
        \foreach \i in {0,1,...,20} 
        {
            \fill[lightgrayX] (\i, -\i) rectangle (\i+1, -\i-1);
        }
        \fill[gray] (21.5,-21.5) circle (10pt); 
        \draw [black, densely dotted] (21.5,-21.5) -- (24, -16); 
        \draw [black, densely dotted] (21.5,-21.5) -- (24, -25); 
        \draw [black] (24, -25) rectangle (25,-16); 
        
        \foreach \i in {0,1,...,20} 
        {
            \fill[lightgrayX] (-\i+1, -\i) rectangle (-\i, -\i-1);
        }
        \fill[gray] (-20.5,-21.5) circle (10pt); 
        
        \def\dbot{-16}
        \fill[gray] (-20.5,\dbot-0.5) circle (10pt); 
        \fill[lightgrayX] (\dbot+1, \dbot-1) rectangle (-20, \dbot);
        
        \fill[lightgrayX] (\dbot+1, \dbot-1) rectangle (\dbot, -21); 
        \fill[gray] (\dbot+0.5,-21.5) circle (10pt); 

        
        
        \foreach \x in {-22 ,..., -1} 
        {
            \draw[black, very thick] (\x+\xmargin,0+\xmargin) -- (\x+1-\xmargin,1-\xmargin);
            \draw[black,  very thick] (\x+\xmargin,1-\xmargin) -- (\x+1-\xmargin,0+\xmargin);
        }
        \foreach \x in {1 ,..., 22} 
        {
            \draw[black, very thick] (\x+\xmargin,0+\xmargin) -- (\x+1-\xmargin,1-\xmargin);
            \draw[black,  very thick] (\x+\xmargin,1-\xmargin) -- (\x+1-\xmargin,0+\xmargin);
        }
        \foreach \x in {0} 
        {
            \draw[black] (\x+\xmargin,0+\xmargin) -- (\x+1-\xmargin,1-\xmargin);
            \draw[black] (\x+\xmargin,1-\xmargin) -- (\x+1-\xmargin,0+\xmargin);
        }
        
        
        \foreach \y in {-2, -3, -4, -5, -6, -8, -9, -10, -11, -12, -14, -15 ,-16, -17, -18, -20, -21, -22, -23, -24, -25}
        {
            \draw[black] (25-1+\xmargin,\y+\xmargin) -- (25-\xmargin,\y+1-\xmargin);
            \draw[black] (25-1+\xmargin,\y+1-\xmargin) -- (25-\xmargin,\y+\xmargin);
        }
        
        \draw[black] (0,-1) rectangle (1,0); 
        
    \end{tikzpicture}
    }
    {%
      \subcaption{Step 2}%
      \label{fig: reduction, step 2}
    }
    \ffigbox[\FBwidth+0.5cm]
    {%
      \begin{tikzpicture}[scale=\smallsca/1.5]
    
    \draw[step=1cm,grayX,thin] (-26,-29) grid (25,1); 

    \fill[lightgrayX] (0,-25) rectangle (1,0); 
    \fill[gray] (0.5, -25.5) circle (10pt); 

    \fill[lightgrayX] (-24,-1) rectangle (22,0); 
    \fill[gray] (-24.5,-0.5) circle (10pt); 
    
    \fill[lightgrayX] (22,-1) rectangle (23,-2); 
    \fill[lightgrayX] (23,-2) rectangle (24,-3);
    \fill[lightgrayX] (23,-3) rectangle (24,-4);
    \fill[gray] (23.5,-4.5) circle (10pt); 
    
    \foreach \i in {0,1,...,24} 
    {
        \fill[lightgrayX] (-\i+1, -\i) rectangle (-\i, -\i-1);
    }
    \fill[gray] (-24.5,-25.5) circle (10pt); 
    
    \def\dbot{-16}
    \fill[gray] (-24.5,\dbot-0.5) circle (10pt); 
    \fill[lightgrayX] (\dbot+1, \dbot-1) rectangle (-24, \dbot);
    
    \fill[lightgrayX] (\dbot+1, \dbot-1) rectangle (\dbot, -25); 
    \fill[gray] (\dbot+0.5,-25.5) circle (10pt); 

    \foreach \i in {0,1,...,23} 
    {
        \fill[lightgrayX] (\i, -\i) rectangle (\i+1, -\i-1);
    }
    \fill[lightgrayX] (23, -24) rectangle (24, -25);
    
    \fill[gray] (23.5,-25.5) circle (10pt); 
    
    \foreach \x in {-26 ,..., -1} 
    {
        \draw[black, very thick] (\x+\xmargin,0+\xmargin) -- (\x+1-\xmargin,1-\xmargin);
        \draw[black,  very thick] (\x+\xmargin,1-\xmargin) -- (\x+1-\xmargin,0+\xmargin);
    }
    \foreach \x in {1 ,..., 24} 
    {
        \draw[black, very thick] (\x+\xmargin,0+\xmargin) -- (\x+1-\xmargin,1-\xmargin);
        \draw[black,  very thick] (\x+\xmargin,1-\xmargin) -- (\x+1-\xmargin,0+\xmargin);
    }
    \foreach \x in {0} 
    {
        \draw[black] (\x+\xmargin,0+\xmargin) -- (\x+1-\xmargin,1-\xmargin);
        \draw[black] (\x+\xmargin,1-\xmargin) -- (\x+1-\xmargin,0+\xmargin);
    }

    \foreach \y in {-26,-27} 
    {
        \draw[black, very thick] (25-1+\xmargin,\y+\xmargin) -- (25-\xmargin,\y+1-\xmargin);
        \draw[black, very thick] (25-1+\xmargin,\y+1-\xmargin) -- (25-\xmargin,\y+\xmargin);
    }
    
    \foreach \y in {-1, ..., -25} 
    {
        \draw[black] (25-1+\xmargin,\y+\xmargin) -- (25-\xmargin,\y+1-\xmargin);
        \draw[black] (25-1+\xmargin,\y+1-\xmargin) -- (25-\xmargin,\y+\xmargin);
    }
    
    \draw[black] (0,-1) rectangle (1,0); 
    \end{tikzpicture}
    }
    {
      \subcaption{Step 2.5}%
      \label{fig: reduction, step 2.5}
    }

  \end{subfloatrow}
  
  \begin{subfloatrow}
    \ffigbox[\FBwidth+0.5cm]
    {%
        \begin{tikzpicture}[scale=\sca/2]
        \fill [verylightgrayX] (-4.5,0) rectangle (10,-10.5);

        
    \fill [gray] (10.2,0.2) rectangle (10.4,-4.2); 
    \fill [black] (10.2,-4.2) rectangle (10.4,-10.2); 
    \fill [black] (-4.6,0.2) rectangle (10.4,0.4); 

        \draw[lightgrayX, thick] (6,0) -- (10,0); 
        \draw[lightgrayX, thick] (6,0) -- (10,-4); 
        \draw[lightgrayX, thick] (10,-4) -- (10,-10); 
        \fill[gray] (10,-10) circle (3pt); 
        
        \draw[lightgrayX, thick] (6,0) -- (-4,0); 
        \draw[lightgrayX, thick] (3.5,-2.5) -- (-4,-2.5); 
        \draw[lightgrayX, thick] (1,-5) -- (-4,-5);  
        \draw[lightgrayX, thick] (-1.5,-7.5) -- (-4,-7.5); 
        \draw[lightgrayX, thick] (6,0) -- (-4,-10); 
        
        \fill[gray] (-4,0) circle (3pt);
        \fill[gray] (-4,-2.5) circle (3pt);
        \fill[gray] (-4,-5) circle (3pt);
        \fill[gray] (-4,-7.5) circle (3pt);
        \fill[gray] (-4,-10) circle (3pt);
        
        \draw[lightgrayX, thick] (6,0) -- (6,-10); 
        \draw[lightgrayX, thick] (3.5,-2.5) -- (3.5,-10); 
        \draw[lightgrayX, thick] (1,-5) -- (1,-10);  
        \draw[lightgrayX, thick] (-1.5,-7.5) -- (-1.5,-10); 
        
        \draw[lightgrayX, thick] (10,-5) -- (8,-7);
        \draw[lightgrayX, thick] (8,-7) -- (8,-10);

        \foreach \x in {8,6,3.5,1,-1.5}
        {
         \fill[gray] (\x,-10) circle (3pt);
        }

        \draw [black, densely dotted] (-4,0) -- (-4.4, 0.1);
        \draw [black, densely dotted] (-4,0) -- (-4.4, -0.7);
        
        \draw [black, densely dotted] (-4,-2.5) -- (-4.4, -2.5+0.7);
        \draw [black, densely dotted] (-4,-2.5) -- (-4.4, -2.5-0.7);
        
        \draw [black, densely dotted] (-4,-5) -- (-4.4, -5+0.7);
        \draw [black, densely dotted] (-4,-5) -- (-4.4, -5-0.7);
        
        \draw [black, densely dotted] (-4,-7.5) -- (-4.4, -7.5+0.7);
        \draw [black, densely dotted] (-4,-7.5) -- (-4.4, -7.5-0.7);
        
        \draw [black, densely dotted] (-4,-10) -- (-4.4, -10+0.7);
        \draw [black, densely dotted] (-4,-10) -- (-4.4, -10.5);

        
        \foreach \y in {-10,...,-1}
        {
        \fill [gray] (-4.6,\y - 0.5) rectangle (-4.4,\y + 0.3);
        }
        \fill [gray] (-4.6,-0.5) rectangle (-4.4,0.2);
        
        \foreach \y in {-2.5, -5, -7.5}
        {
        \draw [black] (-4.6, \y+0.7) rectangle (-4.4, \y-0.7);
        }
        \draw [black] (-4.6, -0.7) rectangle (-4.4, 0.2);
        \draw [black] (-4.6, -10+0.7) rectangle (-4.4, -10.5);
      \end{tikzpicture}%
    }
    {%
      \subcaption{Step 3}%
      \label{fig: reduction, step 3}
    }
    \ffigbox[\FBwidth+0.5cm]
    {%
      \begin{tikzpicture}[scale=\sca/2]
        \fill [verylightgrayX] (-4.5,0) rectangle (10,-11.5-0.5);
    
        \fill [gray] (-4.6,-10 - 0.5) rectangle (-4.4,0.2); 
        \fill [gray] (10.2,0.2) rectangle (10.4,-4.2); 
        \fill [black] (10.2,-4.2) rectangle (10.4,-11.5-0.2); 
        \fill [black] (-4.6,0.2) rectangle (10.4,0.4); 
        
        \draw[lightgrayX, thick] (6,0) -- (10,0); 
        \draw[lightgrayX, thick] (6,0) -- (10,-4); 
        \draw[lightgrayX, thick] (10,-4) -- (10,-11.5); 
        \fill[gray] (10,-11.5) circle (3pt); 

        \draw[lightgrayX, thick] (6,0) -- (6,-11.5); 
        \draw[lightgrayX, thick] (3.5,-2.5) -- (3.5,-11.5); 
        \draw[lightgrayX, thick] (1,-5) -- (1,-11.5);  
        \draw[lightgrayX, thick] (-1.5,-7.5) -- (-1.5,-11.5); 
        
        \draw[lightgrayX, thick] (10,-5) -- (8,-7);
        \draw[lightgrayX, thick] (8,-7) -- (8,-11.5);
        \fill[gray] (8,-11.5) circle (3pt);
        
        
        \draw[lightgrayX, thick] (6,0) -- (-4,-10); 
        \foreach \x in {6,3.5,1,-1.5}
        {
         \fill[gray] (\x,-11.5) circle (3pt);
         \draw[lightgrayX, thick] (\x,\x-6) -- (-4,\x-6);
         \draw[lightgrayX, thick] (-4,\x-6) -- (-4.2,\x-6-0.2);
         \draw[lightgrayX, thick] (-4.2,\x-6-0.2) -- (-4.2, \x-6-1.5);
         \fill[gray] (-4.2, \x-6-1.5) circle (3pt);
        }
        
        \foreach \x in {-4}
        {
         \draw[lightgrayX, thick] (\x,\x-6) -- (-4,\x-6);
         \draw[lightgrayX, thick] (-4,\x-6) -- (-4.2,\x-6-0.2);
         \draw[lightgrayX, thick] (-4.2,\x-6-0.2) -- (-4.2, \x-6-1.5);
         \fill[gray] (-4.2, \x-6-1.5) circle (3pt);
        }

        \fill [black] (-4.6, -10.5) rectangle (-4.4, -11.7);
        
    \end{tikzpicture}
    }
    {
      \subcaption{Step 3.5}%
      \label{fig: reduction, step 3.5}
    }
     \ffigbox[\FBwidth+0.5cm]
    {%
      \begin{tikzpicture}[baseline, scale=\sca/4]
      \def\srad{-24.6}
    \fill [verylightgrayX] (-4.5,0) rectangle (10,-25);

    \fill [gray] (-4.6,-10 - 0.5) rectangle (-4.4,0.2); 
    
    \fill [gray] (10.2,0.2) rectangle (10.4,-4.2); 
    \fill [black] (10.2,-4.2) rectangle (10.4,\srad); 
    \fill [black] (-4.6,0.2) rectangle (10.4,0.4); 

    \draw[gray, very thick] (-4.6,\srad) -- (10.25,\srad); 
    
    \draw[lightgrayX, thick] (6,0) -- (10,0); 
    \draw[lightgrayX, thick] (6,0) -- (10,-4); 
    \draw[lightgrayX, thick] (10,-4) -- (10,-24); 
    \fill[gray] (10,-24) circle (3pt); 
    
    \draw[lightgrayX, thick] (6,0) -- (6,-24); 
    \draw[lightgrayX, thick] (3.5,-2.5) -- (3.5,-24); 
    \draw[lightgrayX, thick] (1,-5) -- (1,-24);  
    \draw[lightgrayX, thick] (-1.5,-7.5) -- (-1.5,-24); 
    \draw[lightgrayX, thick] (-4.2,-10.2) -- (-4.2,-24); 
    
    \draw[lightgrayX, thick] (10,-5) -- (8,-7);
    \draw[lightgrayX, thick] (8,-7) -- (8,-24);
    
    \draw[lightgrayX, thick] (-1.5,-12) -- (-2.8, -13.3);
    \draw[lightgrayX, thick] (-2.8,-13.3) -- (-2.8,-24);
    
    \draw[lightgrayX, thick] (1,-14) -- (-0.25, -15.25);
    \draw[lightgrayX, thick] (-0.25,-15.25) -- (-0.25,-24);
    
    \draw[lightgrayX, thick] (3.5,-16) -- (2.25, -17.25);
    \draw[lightgrayX, thick] (2.25,-17.25) -- (2.25,-24);
    
    \draw[lightgrayX, thick] (6,-18) -- (4.75, -19.25);
    \draw[lightgrayX, thick] (4.75,-19.25) -- (4.75,-24);
    
    \draw[lightgrayX, thick] (8,-18) -- (7, -19);
    \draw[lightgrayX, thick] (7,-19) -- (7,-24);
    
    \draw[lightgrayX, thick] (8,-18) -- (7, -19);
    \draw[lightgrayX, thick] (7,-19) -- (7,-24);
    
    \draw[lightgrayX, thick] (8,-20) -- (9, -21);
    \draw[lightgrayX, thick] (9,-21) -- (9,-24);

    \foreach \x in {-4.2,-2.8,-1.5,-0.25,1, 2.25,3.5, 4.75,6,7,8,9}
    {
    \fill[gray] (\x,-24) circle (3pt);
    }
    
    
    \draw[lightgrayX, thick] (6,0) -- (-4,-10); 
    \foreach \x in {6,3.5,1,-1.5}
    {
     \draw[lightgrayX, thick] (\x,\x-6) -- (-4,\x-6);
     \draw[lightgrayX, thick] (-4,\x-6) -- (-4.2,\x-6-0.2);
     \draw[lightgrayX, thick] (-4.2,\x-6-0.2) -- (-4.2, \x-6-1.5);
    }
    
    \foreach \x in {-4}
    {
     \draw[lightgrayX, thick] (\x,\x-6) -- (-4,\x-6);
     \draw[lightgrayX, thick] (-4,\x-6) -- (-4.2,\x-6-0.2);
     \draw[lightgrayX, thick] (-4.2,\x-6-0.2) -- (-4.2, \x-6-1.5);
    }

    \fill [black] (-4.6, -10.5) rectangle (-4.4, \srad); 
        
    \end{tikzpicture}
    }
    {%
      \subcaption{Step 4}%
      \label{fig: reduction, step 4}
    }
    
  \end{subfloatrow}
  
}
{
  \caption
  {A depiction of the winning Container strategy against Spreader with a single initial occupied vertex in $\ZZ$, as described in Section~\ref{subsection: a reduction to Zei}. The current occupied set is depicted by gray circles, while the history of the occupied set's movement -- by lighter gray squares. In~\eqref{fig: reduction, step 2} and~\eqref{fig: reduction, step 2.5} the vertices deleted to maintain an existing front are marked by thick X's, and those deleted by the implementation of the eighth plane strategy are marked by thin X's. In~\eqref{fig: reduction, step 3},~\eqref{fig: reduction, step 3.5}, and~\eqref{fig: reduction, step 4}, the same distinction is made via black and gray rectangles. In~\eqref{fig: reduction, step 2} and~\eqref{fig: reduction, step 3} dashed lines and black-outlined rectangles depict the regions on the front reachable by Spreader prior to the time-step in which Container completely blocks the front.
  }
  \label{figure: reduction}
}
\end{figure}
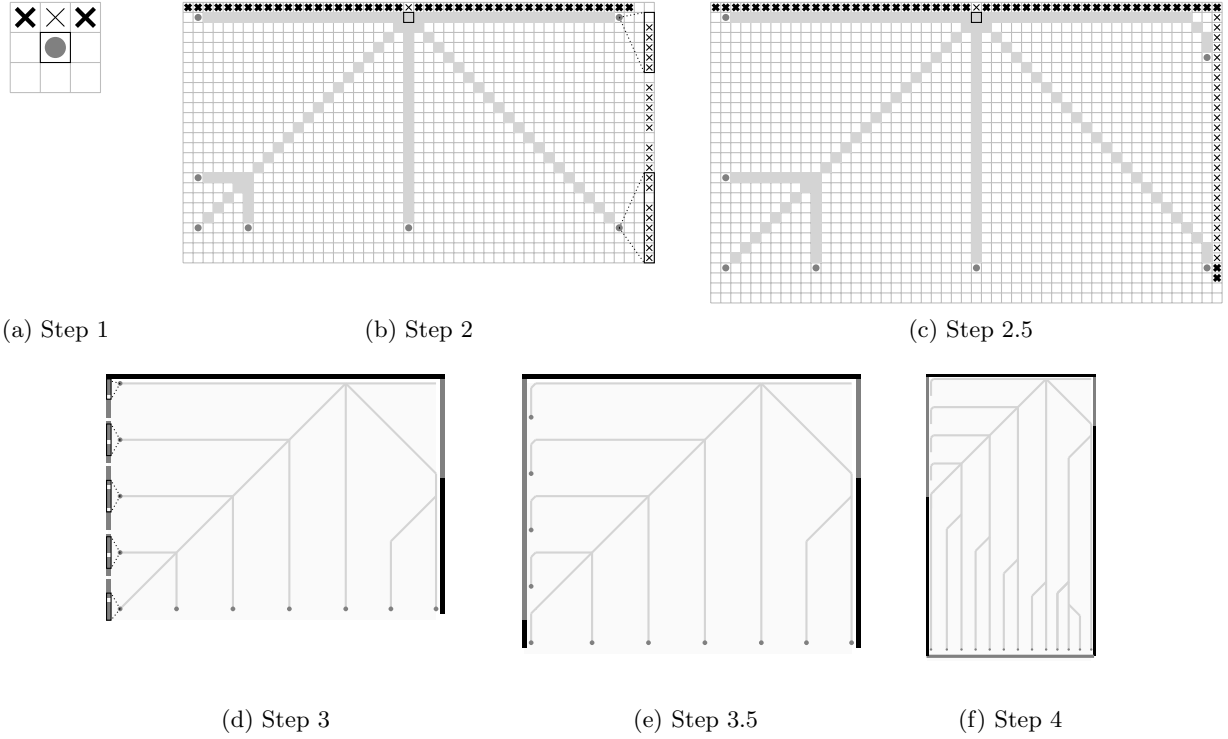

Here we reduce the $\ZZ$ case of Theorem~\ref{thm: root-fire loses} to the following proposition, a slightly stronger version of the theorem for the $\Zei$ case, which we establish in next section.
\begin{prop}\label{prop: ff win sqrt zei}
Container wins the game $(\Zei, c\sqrt{t}, 1)$, with an initial set contained in $\{(x,y)\ :\ y<r_0\} \subset \Zei$, for any $c<\frac{1}{6}$, 
by deleting all the  vertices of a single row $\{(x,y)\ :\ y=r\}\subset \Zei$, for $r$ depending only on $r_0$.
\end{prop}

\begin{proof}[Reduction of Theorem~\ref{thm: root-fire loses} to Proposition~\ref{prop: ff win sqrt zei}]
\noindent The winning Container strategy in $(\ZZ, g,3)$ is obtained by handling each front separately, containing the occupied set in an axis-parallel rectangle. This description is accompanied by Figure~\ref{figure: reduction}. The computational details are rather tedious and not too enlightening, while the arguments for the existence of sufficiently large constants are rather simple, and follow the general strategy of \cite{feldheim20133}.
Hence we leave out some details.

Let $r_0 > 0$ such that the initial occupied set is contained in $[-r_0,r_0]\times [-r_0,r_0]$. Let $r_1, r_2, r_3$ and $r_4$ be suitably chosen large constants, implicitly defined in the course of the proof.

\textbf{Step 1.} At time-steps $[0,r_1-1]$ Container deletes the horizontal segment $[-r_1,r_1]\times \{r_1\}$, restricting the occupied set to $\{(x,y) : y  < r_1\}$. Here $r_1$ is selected so that this could be achieved for an initial set of radius $r_0$.

\textbf{The remainder of the strategy.} The purpose of the strategy is to delete the boundary of the rectangle $[-r_3,r_2]\times [-r_4,r_1]$. The principle is simple: we take the fronts one by one, starting from the east, then the west and finally the south. Each front is taken by applying Proposition~\ref{prop: ff win sqrt zei} to a suitably shifted and rotated copy of $\Zei$ which guarantees the existence of a suitable $r_i$, using $1$ deletion per time-step. The remaining $2$ deletions are used to maintain the existing deleted boundaries, ensuring that Container is restricted to $[-r_3,r_2]\times [-r_4,r_1]$ at all times. 

After \textbf{Step 2,} consisting time-steps $[r_1, 2r_1 + r_2-1]$, the deleted set is  
$\{r_2\} \times [-2r_1-r_2, r_1] \cup 
[-r_2-2r_1,r_2]\times \{r_1\}$,
where the board to which the proposition is applied is a copy of $\Zei$, rotated by $90^\circ$ clockwise and shifted by $(-r_1,-r_1)$, and $r_2$ is given by $r-r_1$ where $r$ is that provided by Proposition~\ref{prop: ff win sqrt zei}.

After \textbf{Step 3,} consisting of time-steps $[2r_1+r_2 , 3r_1 + r_2 + r_3]$, the deletes set consists of 
$[-r_3,r_2]\times \{r_1\}\cup
\{-r_3\} \times [-3r_1-r_2-r_3,r_1]\cup 
\{r_2\times [-3r_1-r_2-r_3,r_1]\}$, where Proposition~\ref{prop: ff win sqrt zei} is applied to the western boundary (and thus determined $r_3$). 

Finally, \textbf{Step 4} does not really require the proposition, as after using $2$ deletions to prolong the eastern and western fronts, only $r_2+r_3$ more deletions are needed to complete the bounding rectangle, so that $r_4=4r_1+2r_2+2r_3+1$ is certainly sufficient.
\end{proof}

The remainder of the section is dedicated to proving Proposition~\ref{prop: ff win sqrt zei}.



\subsection{Container win in the eighth plane -- proof of Proposition~\ref{prop: ff win sqrt zei}}\label{subsection: container win in eighth plane}

This section is dedicated to the proof of Proposition~\ref{prop: ff win sqrt zei}. The proof is accompanied by Figure~\ref{fig:ff strategy Zei}.

\begin{figure}[!ht]
    \newcounter{mysum}
    \centering
    \begin{tikzpicture}[scale=\sca/1.5]


     \fill [lightgrayX] (0,6) rectangle (6,7);
    \fill [lightgrayX] (8,6) rectangle (19,7);
    \fill [lightgrayX] (21,6) rectangle (32,7);
    
    \foreach \x in {0,7,14,21,28,35}
    {
        \foreach \i in {0,...,5}
        {
            \setcounter{mysum}{\x+\i};
            \draw[black] (\value{mysum}+\xmargin,6+\xmargin) -- (\value{mysum}+1-\xmargin,7-\xmargin);
            \draw[black] (\value{mysum}+\xmargin,7-\xmargin) -- (\value{mysum}+1-\xmargin,6+\xmargin);
        }
        
    }
    
    \fill[gray] (0.5, 0.5) circle (10pt);
    \fill[gray] (0.5, 1.5) circle (10pt);
    \fill[gray] (0.5, 2.5) circle (10pt);
    \fill[gray] (0.5, 3.5) circle (10pt);
    
    \fill[gray] (13.5, 0.5) circle (10pt);
    \fill[gray] (13.5, 1.5) circle (10pt);
    
    \fill[gray] (26.5, 0.5) circle (10pt);
    \fill[gray] (26.5, 1.5) circle (10pt);
    \fill[gray] (26.5, 2.5) circle (10pt);

    \foreach \y in {0,...,6}
    {
    \draw[step=1cm,grayX,thin] (0,\y) grid (35+\y,\y+1); 
    }
    
    \draw[black, line width=0.2mm,densely dotted] (0.5, 3.5) -- (0,6);
    \draw[black, line width=0.2mm, densely dotted] (0.5, 3.5) -- (6,6);
    \draw[black] (0,6) rectangle (6,7);
    
    \draw[black, line width=0.2mm,densely dotted] (13.5, 1.5) -- (8,6);
    \draw[black, line width=0.2mm, densely dotted] (13.5, 1.5) -- (19,6);
    \draw[black] (8,6) rectangle (19,7);
    
    \draw[black, line width=0.2mm,densely dotted] (26.5, 2.5) -- (21,6);
    \draw[black, line width=0.2mm, densely dotted] (26.5, 2.5) -- (32,6);
    \draw[black] (21,6) rectangle (32,7);
    
    \end{tikzpicture}
    \caption{Illustration of the Container strategy in the eighth plane at time-step $H-(h+r_0)$ for $h=2$. Circles mark the occupied vertices and $\times$ mark deleted vertices; the set $D\subset L_H$ is shaded. The contribution of various elements of $B_{H-(h+r_0)}$ to $D$ is marked by dotted lines and black rectangles in $L_H$. Note that there are five vacant vertices in $L_H$ (i.e. $|L_H \setminus \FF_t| = 5$), while there are only two such vertices within $D$, these will be occupied by Container before time-step $H-r_0$. The occupation of $L_H$ will follow in the remaining $r_0$ turns.}
    \label{fig:ff strategy Zei}
\end{figure}
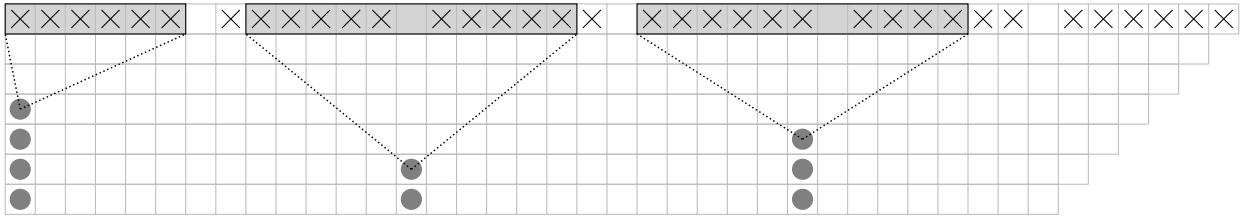

Throughout this section, let $c < \frac{1}{6}$ as per Proposition~\ref{prop: ff win sqrt zei},
and let $r_0 > 0$ be such that the initial occupied set is contained in $[0,r_0]\times [0,r_0]$. Hence, throughout the game, for all $t>0$
\begin{equation}\label{eq: space-time relation}
B_t\subset \Z\times [0,t+r_0].
\end{equation}

Let $h,H\in \N$ be a pair of constants to be specified later.
We shall delete the horizontal segment $L_H:=[0,H]\times \{H\}$, restricting the occupied set to the lower triangle bounded by it. 
Our strategy follow three steps. 

\textbf{Phase 1 -- Leaving out a regular sieve.}
Under the restriction that $H$ is divisible by $h+r_0$, denote by
\[X := \left\{\left(i\cdot \frac{H}{h+r_0}, H\right) :  0\le i\le h+r_0-1\right\},\] 
a set of vertices on $L_H$ regularly spaced at $\frac{H}{h+r_0}$ intervals.
During the first phase of our strategy, spanning across times $[0,H-(h+r_0)-1]$, Container deletes the vertices of $L_H\setminus X$ in an arbitrary order. 

\textbf{Phase 2 -- Deleting the most imminent bottleneck through which Spreader may cross $L_H$.}
During the second phase of our strategy, spanning across time-steps
$[H-(h+r_0),H-2r_0-1]$, Container first deletes, in an arbitrary order, the vertices $D:= \{u\in L_H : d(u, B_{H-(h+r_0)}) \le h+r_0\}$, namely vertices of the sieve which are of distance less than $h+r_0$ from an occupied vertex. In the remaining time-steps, arbitrary vertices of $X$ are also deleted, as much as the deletion power permits.
To see that $D$ could be deleted on time, we require the following claim, whose proof is postponed to section~\ref{subsection: bounding danger zone} below.
\begin{clm}\label{clm: ub: danger zone}
$D$ is contained in $2c\sqrt{H}$ horizontal segments of total length at most $3(h+r_0)\cdot c\sqrt{H}$.
\end{clm}
 
Since every two vertices in $X$ are at distance at least $\frac{H}{h+r_0}$ from each other, for every $d\in\N$, a horizontal segment of length $d$ can intersect at most $\left\lceil\frac{d}{H/(h+r_0)}\right\rceil \le d\cdot\frac{h+r_0}{H} + 1$ vertices in $X$.
Thus the union of $n$ segments of total length at most $d$ intersects $X$ in at most $d\cdot\frac{h+r_0}{H} + n$ vertices in $X$.

Using Claim~\ref{clm: ub: danger zone} we need consider at most $n=2c\sqrt H$ segments of total length at most $3(h+r_0)\cdot c\sqrt{H}$, so that
\[|D\cap X| \le  3(h+r_0)\cdot c\sqrt{H}\cdot \frac{h+r_0}{H}+2c\sqrt{H} = c\sqrt{H}\left(2+\frac{3(h+r_0)^2}{H}\right).\]
To verify that this is indeed less than or equal to $h-r_0$, observe that, 
writing $\Bar h:= \frac{h+r_0}{\sqrt{H}}$, which  will be chosen to satisfy $\Bar h \le \sqrt H$, it would suffice to show that
\[3c\sqrt{H}(1+ \Bar h^2) \le \Bar h\sqrt{H}-2r_0.\]
Isolating $c$, it would suffice to show that we may chose $H$ and $\Bar h$ that satisfy that \vspace{-2pt}
\begin{equation}\label{eq: c is small enough}
c \le \frac{\Bar h}{3(1+\Bar h^2)} - \frac{2r_0}{3\sqrt{H}(1+\Bar h^2)}.
\end{equation}
\vspace{-2pt}
Indeed, using our assumption $c < \max_{\Bar h\geq 0}\left\{\frac{\Bar h}{3(1+\Bar h^2)}\right\} = \frac{1}{6}$ and the fact that we may pick arbitrarily large $H$, a choice of such $\Bar h$ and $H$ satisfying all previous requirements is possible. 

\textbf{Phase 3 -- Deleting the remaining vertices of $L_H$.}
Finally, during the third phase of our strategy, spanning across time-steps $[H-2r_0,H]$, we delete the remaining vertices of $L_H$ in an arbitrary order, taking advantage of the fact that, as these are not in $D$, they are too far from occupied vertices to be claimed by Spreader within this time-frame. By time-step $H$, Container will have deleted all $H+1$ vertices of $L_H$.

\subsubsection{Bounding \texorpdfstring{$D$}{D} -- proof of Claim~\ref{clm: ub: danger zone}}\label{subsection: bounding danger zone}
Let $t\in\N$, and, given $B' \subset B_{t}$, denote $D(B') := \{u\in L_H : d(u,B') \le h+r_0\}$, and observe that 
$D=D(B_{H-(h+r_0)})$, while
$D(B_{H-2h-3r_0-1})=\emptyset$, since $d(B_{H-2h-3r_0-1},L_H)>h+r_0$. Thus, we define $\tilde B:=B_{H-h-r_0} \setminus B_{H-2h-3r_0-1}$ which satisfies $D=D(\tilde B)$. Using $|\Delta B_t| = c\sqrt{t}$, we have
\begin{equation}\label{eq: tilde B bounded}
  |\tilde B| = \sum_{k=H-2h-3r_0}^{H-h-r_0}|\Delta B_{k}| \le (h+2r_0+1) |\Delta B_{H-(h+r_0)}| \le \frac{3}{2}(h+r_0) c\sqrt{H}.
\end{equation}

Denote by $z\rightsquigarrow x$ the transitive closure of the relation
\[(\exists s\le H-(h+r_0)\,:\,z\in B_{s-1},x\in \Delta B_s, d(z,x)=1),\]
so that $z\rightsquigarrow x$ if the occupation of $x$ could have been instigated by the occupation of $z$. Further denote the leftmost $(H-2h-3r_0-1)$-ancestor of a vertex $x\in \tilde B$ with respect to this relation by $z(x):=\min \{ z\in B_{H-2h-3r_0-1} : z\rightsquigarrow x\}$.

We partition $\tilde B$ into equivalence classes of vertices $x\in\tilde B$ sharing the same $z(x)$.
To this end, denote
\[Z:=\{z\in B_{H-2h-3r_0-1}  \,:\,\exists x\in\tilde B\text{ s.t. } z=z(x)\} \quad \text{and} \quad \Sigma(z):=\{x\in \tilde B\,:\, z(x)=z\}.\]

Because $\displaystyle\tilde B=\bigcupdot_{z\in Z}\Sigma(z)$, and as for every $u\in\tilde B$ the set $D(\{u\})$ is either empty, or a horizontal segment of length $2(h+r_0)+1$, we obtain that
$D$ is contained in at most $|D| / (2(h+r_0)+1)$ horizontal segments. Using \eqref{eq: tilde B bounded}, we are left with showing
$|D| \le 2|\tilde B|$, which we will obtain by showing that for every $z\in Z$ we have
\begin{equation}\label{eq: D(sigma) le 2*sigma}
  |D(\Sigma(z))| \le 2|\Sigma(z)|.
\end{equation}

To prove~\eqref{eq: D(sigma) le 2*sigma}, let $z\in Z$ and denote by $v_0 = (x_0,y_0)$ and $v_1=(x_1,y_1)$, the leftmost and rightmost elements of $\Sigma(z)$ whose distance from $L_H$ is at most $h+r_0$, respectively.
Write $P_{0}$ and $P_{1}$ for the associated paths from $z$ to $v_0$ and to $v_1$ respectively.
Observing that 
\[|D(\Sigma(z))| \le  \big|[x_0-(h+r_0), x_1+(h+r_0)]\times \{H\}\big| \le x_1-x_0+2(h+r_0)+1,\]
and $P_0,P_1\subset \Sigma(z)$, it would suffice to show that \vspace{-2pt}
\begin{equation}\label{eq: many ancestors}
  2|P_0\cup P_1|\ge x_1-x_0+2(h+r_0) +1.
\end{equation}
\vspace{-2pt}
To see this, assume without loss of generality that $|P_0| \le |P_1|$. Observe that since $d(v_0, L_H) \le h+r_0$ and $d(z, L_H) \ge d(B_{H-2h-3r_0-1}, L_H) \ge 2(h+r_0)+1$ (by~\eqref{eq: space-time relation}),  we have $|P_0| > h+r_0$. 
Next, observe that \vspace{-2pt}
\[x_1 - x_0 \le 2d(v_1, P_0 \cap P_1)\le 2|P_1 \setminus P_0|,\]
\vspace{-2pt}
where the first inequality uses $|P_0| \le |P_1|$ and the triangle inequality, and the second inequality uses discrete continuity of $P_1$. As $|P_0\cup P_1|\ge |P_0|+|P_1\setminus P_0|$, putting all of these together, \eqref{eq: many ancestors} follows. \qed
\section*{Acknowledgment}

The authors wish to thank Rani Hod for his insightful remarks concerning the presentation, and for many useful discussions.

\bibliography{Bib}
\bibliographystyle{abbrv}

\pagebreak
\section{Table of Notations}

\begin{center}
\begin{tabular}{ |p{2.5cm}|p{12.5cm}| p{0.8cm}|} 
 \hline
 \textbf{Notation} & \textbf{Description} & Loc.\\ 
 \hline
\multicolumn{3}{|c|}{\textbf{Section~\ref{section: introduction}}}\\
 \hline
    $\ZZ$ & \textbf{Plane.} $\Z^2$ with strong connectivity. &\S\ref{section: introduction} \\
    $\Zei$ & \textbf{Eighth plane.} Upper half of the positive quadrant of $\ZZ$.&\S\ref{section: introduction} \\
    $g(t)$ & \textbf{Spreading function.} Function controlling the size of the occupied set. &\S\ref{section: introduction}\\
    $q(G,g)$ & $q(G,g):=\{q\ :\ \text{$(G,q,g)$ is Container win for every finite $B_0$}\}$. &\S\ref{section: introduction}\\
    $\FF_t$ &  Set of deleted vertices up to time-step $t$. &\S\ref{section: introduction}\\
    $G_t$ &  Induced graph after the deletion of the vertices of $\FF_t$. &\S\ref{section: introduction}\\
    $B_t$ &  Occupied set at time-step $t$. &\S\ref{section: introduction}\\
     \hline
\multicolumn{3}{|c|}{\textbf{Section~\ref{section: eighth plane}}}\\
 \hline
    $h_t$ and $H_t$ &  $h_t$ is the size of the segments at time-step $t$. $H_t := 11\str h_t^2$. & \S\ref{subsection: Zei strategy} \\
    $\tms_i$ &  $\tms_i:=\{t\in\N : h_t/h_{t-1} = i \}$.& \S \ref{subsection: Zei strategy} \\    
    $\seg_t$ &  \textbf{Segments.} $\seg_t:= \{\{0,\dots,h_t-1\} + h_t k : k\in\Z\}$.
    & \S \ref{subsection: Zei strategy}\\
    $\anc_s(S)$ &  $\anc_s(S):=\{S' \in \seg_s : S'\subset S\}$.
    & \S \ref{subsection: Zei strategy}\\
    $B_t(S)$ &  $B_t(S)$ is the set of occupied vertices in $S\times\{t\}$. &\eqref{eq: Bt def eighth plane}\\
    $t$-path & An upwards path in $G_t$. & \S\ref{subsection: Zei strategy}\\
    
    $B_t^\ell$ &  Set of vertices from which a $t$-path of length $\ell$ starts. & \S\ref{subsection: fire simulation}\\
    $\dispath{t}{\ell}(A)$ & Size of largest collection of disjoint $t$-paths of length $\ell$ starting in $B_t(A)$. 
    & \S\ref{subsection: fire simulation}\\
    $\dis_t$ & \textbf{Disrupted Segments.} $\dis_t:= \Big\{S\in\seg_t : (\FF_t\setminus\FF_{t-1}) \cap \left(S\times [t,t+h_{t}-1]\right) \ne \emptyset\Big\}$. &\S\ref{subsection: chi} \\
    $I^\ell_s(S)$ and $I_t(S)$  & $I^\ell_s(S)$ is the minimal interval around $S$ with many disjoint $s$-paths. $I_t(S) := I_t^{\H_t}(S)$. &\eqref{eq: alert-interval-def} \\
    $\spr_t(S)$ &  Indicator for the spreading status of $S$ at time-step $t$. $\spr_t(S):= \ind \{\ct_t(S) > 0\}$.&\eqref{eq: nsim-def} \\
    $\ct_t(S)$  & \textbf{Consolidation countdown.} Number of turns until $S$ becomes simulative. &\eqref{eq: countdown-def}\\
    $\ell_t(S)$ & \textbf{Look ahead.} Vertical distance between the front and the simulated fire in $S$.&\eqref{eq: look-ahead-def}\\
    $L_t(S)$ & \textbf{Look ahead region.} $L_t(S) := S\times [t,t+\ell_t(S)]$. &\eqref{eq: look-ahead-def}\\
    $F_t(S)$ and $f_t(S)$ &  $F_t(S)$ is the set of deleted vertices associated with $S$ up to time-step $t$. $f_t(S) := |F_t(S)|$.&\eqref{eq: firefighters def}\\
    $\rng_t(S)$ & \textbf{Range.} Number of endpoints of $t$-paths emanating from $B_t(S)$  in $S\times \{t+\ell_t(S)\}$. &\eqref{eq: r def}\\
    $\phi_t(S)$ &  The size of the simulated fire in $S$ at time-step $t$. &\eqref{eq: phi def} \\
    $\lambda_t(S)$ & Nearest segment left of $S$ with a positive change in $\ppo_t$. &\eqref{eq: lambda def} \\
    $d_t(S)$ & \textbf{Debt.} The debt $S$ holds towards the desired evolution of $\phi$ at time-step $t$. &\eqref{eq: debt def} \\
    $\Phi_t(S)$ & \textbf{Potential.} $\Phi_t(S):= \phi_t(S) + |F_t(S)|$.&\S\ref{subsection: mu} \\
    \hline
\multicolumn{2}{|c|}{\textbf{Section~\ref{section: directed half plane}}}\\
\hline
     $\Zhp$ & \textbf{Directed half-plane.} Upper half-plane with edges directed upwards and diagonally. &\S\ref{subsection: Zei strategy}\\
     $\area^{\GG}_t$ & \textbf{Play area.} $\bigcup_{t'=0}^t B_{t'} \cup F_{t'}$. &\eqref{eq: play area}\\
     $\trap(D)$ & \textbf{Infinite Trapezoid.} Convex hull in $\Z^2$ of $D+\{k(1,1),k(-1,1)  :\ k\in \N\}$.& \S\ref{section: directed half plane}.0\\
\hline
\multicolumn{2}{|c|}{\textbf{Section~\ref{section: plane}}}\\
\hline
    $\theta^i$ and $\theta^{i,i+1}$ & Cardinal and secondary directions in $\Z^2$. &\S\ref{subsection: plane}\\
    $L^i_t$ and $L^i(d)$ & \textbf{Front-line.} $L^i_t := L^i(\rad^i_t)$, where $L^i(d):=d\theta^i + \Z\theta^{i+1}$. &\eqref{eq: fire-rad-def} \\
    $\rad^i_{t}$ & \textbf{Occupation radius.} $\rad^i_{t}:=\min\left\{r\ :\ L^i(r) \cap \bigcup_{t'=0}^{t-1}B_{t'}=\emptyset\right\}$&\eqref{eq: fire-rad-def} \\
    $\GG^i_t$ & Half-plane game played in direction $i$ at time-step $t$ employed by the plane strategy.& \S\ref{subsection: plane}\\
     \hline
\end{tabular}
\end{center}

\end{document}